\renewcommand{\qedsymbol}{$\blacksquare$} 
\theoremstyle{plain}
\newtheorem{thm}{Theorem}[section]
\newtheorem*{claim}{Claim}
\newtheorem{prop}[thm]{Proposition}
\newtheorem{lem}[thm]{Lemma}
\newtheorem{cor}[thm]{Corollary}
\newtheorem{obs}[thm]{Observation}
\theoremstyle{remark}
\theoremstyle{definition}
\newtheorem{defn}[thm]{Definition}
\newcommand{\N}{\mathbb{N}} 
\newcommand{\R}{\mathbb{R}} 
\newcommand{\Z}{\mathbb{Z}} 
\newcommand{\calA}{\mathcal{A}}
\newcommand{\calC}{\mathcal{C}}
\newcommand{\calF}{\mathcal{F}}
\newcommand{\calG}{\mathcal{G}}
\newcommand{\calH}{\mathcal{H}}
\newcommand{\calI}{\mathcal{I}}
\newcommand{\calL}{\mathcal{L}}
\newcommand{\calS}{\mathcal{S}}
\newcommand{\calT}{\mathcal{T}}
\newcommand{\calU}{\mathcal{U}}
\newcommand{\calV}{\mathcal{V}}
\newcommand{\Delp}{\Delta_{v'}^{x'}}
\newcommand{\Delnp}{\Delta^{x'}_{v}}
\newcommand{\Del}{\Delta_v^x}
\newcommand{\MDel}{M^{x}_{v}}
\newcommand{\MDelp}{M^{x'}_{v}}
\newcommand{\STOP}{\texttt{STOP}}
\newcommand{\eps}{\varepsilon}
\title{The typical approximate structure of sets with bounded sumset}
\author[1]{Marcelo Campos\thanks{\href{mailto:marcelo.campos@impa.br}{\texttt{marcelo.campos@impa.br}}}}
\author[2]{Matthew Coulson\thanks{\href{mailto:matthew.coulson@uwaterloo.ca}{\texttt{matthew.coulson@uwaterloo.ca}}}}
\author[3]{Oriol Serra\thanks{\href{mailto:oriol.serra@upc.edu}{\texttt{oriol.serra@upc.edu}}}}
\author[4]{Maximilian W\"{o}tzel\thanks{\href{mailto:m.wotzel@uva.nl} {\texttt{m.wotzel@uva.nl}}}}
\affil[1]{Instituto de Matem\'atica Pura e Aplicada, Rio de Janeiro, RJ, 22460-320, Brazil}
\affil[2]{Department of Combinatorics \& Optimization, University of Waterloo, ON N2L 3G1, Canada}
\affil[3]{Department of Mathematics, Universitat Polit\`ecnica de Catalunya, 08034 Barcelona, Spain}
\affil[4]{Korteweg-de Vries Institute for Mathematics, Universiteit van Amsterdam, 1098 XG Amsterdam, The Netherlands}
\date{}
\begin{document}
\maketitle

\begin{abstract}
Let $A_1$ and $A_2$ be randomly chosen subsets of the first $n$ positive integers of cardinalities $s_2\geq s_1 = \Omega(s_2)$, such that their sumset $A_1+A_2$ has size $m$.
We show that asymptotically almost surely $A_1$ and $A_2$ are almost fully contained in arithmetic progressions $P_1$ and $P_2$ with the same common difference and cardinalities approximately $s_i m/(s_1+s_2)$.
We also prove a counting theorem for such pairs of sets in arbitrary abelian groups.
The results hold for $s_i = \omega(\log^3 n)$ and $s_1+s_2 \leq m = o(s_2/\log^3 n)$.
Our main tool is an asymmetric version of the method of hypergraph containers which was recently used by Campos to prove similar results in the special case $A=B$.

\bigskip
\noindent\textbf{Keywords:} additive combinatorics, sumsets, hypergraph containers

\medskip
\noindent\textbf{MSC2010 classes:} 11P70, 11B30, 05C65
\end{abstract}

\section{Introduction and Main Results}

The general framework of problems in additive combinatorics is to ask for the structure of a set $A$  subject to some additive constraint in an additive group. 
The celebrated theorem of Freiman~\cite{F1973} provides such a structural result in terms of arithmetic progressions when the sumset $A+A$ is small.
Classical results like the Kneser theorem in abelian groups or the Brunn--Minkowski inequality in Euclidean spaces naturally address a similar problem for the addition of \emph{distinct} sets $A$ and $B$.
Ruzsa's proof of Freiman's theorem does provide the same structural result for distinct sets $A, B$ with the same cardinality when their sumset is small. 
When the sumset $A+A$ is very small, then another theorem of Freiman shows that the set is  dense in one arithmetic progression, and this result has been also extended to distinct sets $A$ and $B$ by Lev and Smeliansky~\cite{LS1995} showing that both sets are dense in arithmetic progressions with the same common difference. 
Discrete versions of the Brunn--Minkowski inequality have also been addressed for distinct sets by Ruzsa~\cite{R1994b} and Gardner and Gronchi~\cite{GG2001}. 

Motivated by the Cameron-Erd\H{o}s conjecture on the number of sum-free sets in $[n]=\{1,2,\dots,n\}$, there has been a quest to analyze the \emph{typical} structure of sets satisfying some additive constraint.  
One of the most efficient techniques to address this problem is the method of hypergraph containers first introduced by Balogh, Morris and Samotij~\cite{BMS2015} and independently by Saxton and Thomason~\cite{ST2015}, which has been successfully applied to a number of problems of this flavour. 

In~\cite{ABMS2013} Alon, Balog, Morris and Samotij posed a conjecture on the number of sets $A$ of size $s\ge C\log n$ contained in $[n]$ which have sumset $|A+A|\le K|A|$, $K\le s/C$.
This was proved recently by Green and Morris~\cite{GM2016} for $K$ constant and recently extended by Campos~\cite{C2019} to $K=o(s/(\log n)^3)$. 
These counting results are naturally connected to the typical structure of these sets, showing that they are almost contained in an arithmetic progression of length $(1+o(1))Ks/2$. 
We build on the later work by Campos to adapt the result to distinct sets. 
Our main result is the following.

\begin{thm}\label{thm:easymain_structure}
Let $n\geq s_2\geq s_1 = \Omega(s_2)$ be integers and $m$ an integer satisfying \[s_1+s_2\leq m = o(s_2^2/(\log n)^3).\]
Then for almost all sets $X_1,X_2\subset [n]$ such that $|X_i|=s_i$ and $|X_1+X_2|\leq m$, there exist arithmetic progressions $P_1$ and $P_2$ with the same common difference of size $|P_i| = (1+o(1))s_i m/(s_1+s_2)$ and $|X_i\setminus P_i| = o(s_i)$.
\end{thm}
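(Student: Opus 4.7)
The strategy is to adapt the hypergraph container argument of Campos~\cite{C2019} from the symmetric case $A+A$ to the asymmetric case $A_1+A_2$. Since the statement is a typicality claim, the objective is to bound the number of valid pairs $(X_1,X_2)$ violating the AP-structure by $o(1)$ times the total count of pairs with $|X_i|=s_i$ and $|X_1+X_2|\le m$; the latter is easily lower-bounded by placing each $X_i$ inside an arithmetic progression of length $(1+o(1))s_im/(s_1+s_2)$ with common difference one, which produces a sumset of size at most $m$.

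The heart of the proof is an asymmetric container lemma producing a family $\calF$ of container pairs $(C_1,C_2)\subseteq[n]\times[n]$ such that (i) every valid pair satisfies $X_i\subseteq C_i$ for some $(C_1,C_2)\in\calF$, (ii) $\log|\calF|=o(s_1)$, and (iii) each container pair satisfies $|C_1+C_2|\le(1+o(1))m$ with $|C_i|$ of the target order. I would encode the problem as a hypergraph on the vertex set $[n]\sqcup[n]$, with tagged copies representing membership in $X_1$ and $X_2$, and edges recording additive configurations---for example quadruples $(a,b,a',b')$ with $a,a'$ in the first copy, $b,b'$ in the second, and $a+b=a'+b'$, since any pair with $|X_1+X_2|\le m$ contains $\gtrsim s_1^2s_2^2/m$ such quadruples by Cauchy--Schwarz. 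An asymmetric version of the Campos peeling scheme---at each step removing a high-degree vertex from whichever side is currently too dense relative to its target size---should yield the desired family. The main obstacle is designing this two-sided peeling rule so that $|C_1|$ and $|C_2|$ scale correctly and independently, and so that the required local-degree estimates remain valid simultaneously on both sides under the hypothesis $m=o(s_2^2/\log^3 n)$; tracking two scales in tandem is genuinely harder than the symmetric case, especially when $s_1$ and $s_2$ are of comparable but distinct magnitude.

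With the containers in hand, the remainder follows a familiar two-step pattern. First, a Freiman-type stability result for sumsets of distinct sets in the spirit of Lev--Smeliansky~\cite{LS1995} is applied to each container to conclude that, apart from an exceptional subfamily that contributes negligibly to the count, $(C_1,C_2)$ is almost contained in a pair of arithmetic progressions $P_1,P_2$ sharing a common difference and of lengths $(1+o(1))s_im/(s_1+s_2)$. Second, a union bound shows that summed over $\calF$ and over the polynomially many candidate pairs of progressions, the number of valid $(X_1,X_2)$ with $|X_i\setminus P_i|\ge\eta s_i$ is dwarfed by the count from the lower-bound construction. Neither of these steps should need substantially new ideas beyond adapting the analogous arguments in~\cite{C2019} to handle two size parameters, so the real work is concentrated in the asymmetric container construction.
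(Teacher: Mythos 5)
Your proposal correctly identifies the overall pattern (containers, then supersaturation and stability, then a union bound against the trivial lower bound), but the specific hypergraph encoding you suggest would not reach the stated range of $m$. You propose a two-class vertex set $[n]\sqcup[n]$ with $4$-uniform edges recording additive quadruples $a+b=a'+b'$; this is the additive-energy encoding underlying the Green--Morris approach and, combined with a standard (symmetric) container peeling, it yields only a constant doubling range $m=O(s_1+s_2)$. The range $m=o(s_2^2/\log^3 n)$ is achieved in the paper precisely by switching to the \emph{asymmetric} container lemma of Morris--Samotij--Saxton, here extended to a multipartite form (Theorem~\ref{thm:container}). The relevant hypergraph is $3$-partite and $3$-uniform, $\calH(A_1,A_2,(A_1A_2)\setminus B)$ with $\{a_1,a_2,c\}$ an edge iff $a_1+a_2=c$: the third vertex class is the ambient sumset domain $F_1+F_2$, and ``independence'' there means missing at most $m$ vertices from that class. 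Treating the sumset-domain class asymmetrically (it need only shrink by an \emph{additive} $\delta q$, not a multiplicative factor) is what makes the fingerprint size $b\approx\sqrt{m/\log n}$ acceptable and the total family size $2^{o(s_1)}$ achievable in the extended range. Your ``two-sided peeling rule'' has no sumset-domain class at all, so there is nothing to treat asymmetrically, and one would be stuck with the weaker symmetric bounds.

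A second, subtler gap is in the stability step, which you describe as applying ``a Freiman-type stability result in the spirit of Lev--Smeliansky'' to each container, claiming this needs no substantially new ideas. The containers produced by the argument do not themselves have small sumset; they only have the property that most pairs $(a_1,a_2)\in A_1\times A_2$ land in $B$, i.e.\ the \emph{restricted} sumset along a dense bipartite graph $\Gamma\subset A_1\times A_2$ is small. One therefore needs a \emph{robust} asymmetric $3k{-}4$ theorem. The existing robust versions (Lev for $U=V$, Shao--Xu for $|U|=|V|$) require the two sets to have equal cardinality, but here the containers $A_1,A_2$ have sizes approximately $s_1m/(s_1+s_2)$ and $s_2m/(s_1+s_2)$, which differ by a constant factor (and the sizes drift during the container iteration). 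The paper has to prove a new robust version of the Lev--Smeliansky theorem (Theorem~\ref{prop:freiman3k4robust}) from a new robust Kneser theorem for sets of unrelated cardinalities (Theorem~\ref{prop:robustkneser}); this is one of the genuinely novel ingredients and is not a straightforward adaptation of Campos' argument.
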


Theorem~\ref{thm:easymain_structure} extends the result by Campos which corresponds to the symmetric case  $X_1=X_2$.  
It is derived from Theorem~\ref{thm:approximatestructure} in Section~\ref{sec:mainproofs} which gives more detailed quantitative estimations of the asymptotics involved in the above statement.

Our proof requires that the cardinalities of the two sets $X_1, X_2$ are not arbitrarily far apart, with specifics being discussed in Section~\ref{sec:concluding}.
But this seems natural: Even if the condition $|X_1|=\Omega (|X_2|)$ can be weakened, it is not clear to us that a nontrivial structural result should hold when one set is much smaller than the other.
As an extreme example, consider the situation when $|X_2|=2$ and $X_1$ is a random set $X_1\subset [n]$ of size $|X_1| = \Omega(\log n)$ such that $|X_1+X_2|\leq |X_1|+c$ for some positive constant $c\geq 3$. 
Then $X_1$ is a union of at most $c$ arithmetic progressions. 
An arbitrary union of $c$ progressions will provide an example of a set $X_1$ with such a small sumset, and the probability that a random choice of $c$ progressions is well covered by a short single one is negligible.
Back-of-the-envelope calculations imply that an equivalent statement might hold, as long as the size of the smaller set is bounded.

We can also prove the following counting analogue to Theorem~\ref{thm:easymain_structure} for an arbitrary abelian group, which can be compared to Theorem~1.4 in~\cite{C2019}.
We need the following definition.
For an abelian group $G$ and any positive real number $t$ define $\beta(t) = \max\{|H| : H \leq G,\, |H|\leq t\}$.

\begin{thm}\label{thm:easymain_counting}
Let $G$ be an abelian group.
Let $n\geq s_2\geq s_1 = \Omega(s_2)$ be integers and $m$ an integer satisfying $s_1+s_2\leq m = o(s_2^2(\log s_2)^{-4}(\log n)^{-3})$.
Then for any $F_1,F_2\subset G$ with $|F_i|=n$, the number of pairs of sets $(X_1,X_2)\in 2^{F_1}\times 2^{F_2}$ such that $|X_i|=s_i$ and $|X_1+X_2|\leq m$ is at most \[2^{o(s_2)}\binom{\frac{s_1}{s_1+s_2}(m+\beta)}{s_1}\binom{\frac{s_2}{s_1+s_2}(m+\beta)}{s_2},\] where $\beta = \beta((1+o(1))m)$.
\end{thm}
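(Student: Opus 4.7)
The strategy is to follow the hypergraph container approach of Campos~\cite{C2019}, adapted to an asymmetric bipartite setting. The plan is to (i) construct an auxiliary bipartite hypergraph $\calH$ on $F_1\sqcup F_2$ whose independent configurations encode admissible pairs $(X_1,X_2)$ with $|X_1+X_2|\le m$; (ii) apply an asymmetric container theorem to produce a small family $\calC$ of container pairs $(C_1,C_2)$, with $|\calC|=2^{o(s_2)}$ and $|C_1+C_2|\le (1+o(1))m$, such that each admissible $(X_1,X_2)$ fits inside some container pair; and (iii) bound $|C_i|$ by invoking an asymmetric Freiman-type inequality in the abelian group $G$.

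The key technical work is in phases (i) and (ii). The edges of $\calH$ should be chosen so that $|X_1+X_2|\le m$ forces $\calH[X_1\cup X_2]$ to contain many edges (a supersaturation statement), with codegree profiles on the two sides balanced enough that the container algorithm produces simultaneous containers $C_1\supseteq X_1$ and $C_2\supseteq X_2$ rather than a container on one side alone. In phase (iii), since $|C_1+C_2|\le (1+o(1))m$ is close to $|C_1|+|C_2|$, an asymmetric Freiman--Ruzsa / Lev--Smeliansky-type theorem applies in $G$. Kneser's theorem allows the sumset of cosets of a finite subgroup $H\le G$ to be shrunk by up to $|H|$, which is exactly the source of the term $\beta = \beta((1+o(1))m)$, and the resulting asymmetric structural conclusion is that $|C_i|\le \frac{s_i}{s_1+s_2}(m+\beta)$, with $C_1,C_2$ essentially contained in coset progressions of common difference.

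The counting bound then follows by summing $\binom{|C_1|}{s_1}\binom{|C_2|}{s_2}$ over $\calC$ and absorbing the factor $|\calC|=2^{o(s_2)}$ into the leading $2^{o(s_2)}$ term. The principal obstacle is calibrating the asymmetric container step: the supersaturation and codegree estimates must be sharp enough to produce container sizes matching the Freiman bound on \emph{both} sides simultaneously, despite $s_1$ and $s_2$ potentially being of different orders. This is precisely where the hypothesis $s_1=\Omega(s_2)$ enters, equalising the two sides of the container bookkeeping, and where the restriction $m = o(s_2^2(\log s_2)^{-4}(\log n)^{-3})$ becomes crucial to ensure the container-algorithm error terms are small enough to be absorbed into the $2^{o(s_2)}$ factor. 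The extra logarithmic slack compared with Theorem~\ref{thm:easymain_structure} is the standard cost of running the container iteration in a group where no underlying integer ordering is available.
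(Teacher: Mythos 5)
The high-level outline (hypergraph containers plus an additive combinatorics input plus summing over the container family) is correct and is indeed the paper's strategy, but two of your technical choices are wrong in ways that would break the proof.

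\textbf{The auxiliary hypergraph cannot be bipartite on $F_1\sqcup F_2$.} The constraint $|X_1+X_2|\le m$ is a global constraint and cannot be encoded by forbidding individual pairs $(a_1,a_2)\in X_1\times X_2$: any single pair is allowed. The paper instead uses a $3$-partite hypergraph $\calH(F_1,F_2,F)$ with $F=F_1+F_2$ and hyperedges $\{a_1,a_2,c\}$ whenever $a_1+a_2=c$. The key encoding trick is that $X_1\cup X_2\cup (F\setminus(X_1+X_2))$ is an independent set, and $|X_1+X_2|\le m$ translates into the third part of this independent set being all but $m$ of $V_3$, exactly the condition $\calI_m$. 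Without an extra part playing the role of the complement of the sumset, the container machinery has nothing to bite on. (The container lemma itself also needs to be upgraded from the bipartite asymmetric form of Morris--Samotij--Saxton to a genuinely multipartite one, which is Theorem~\ref{thm:container} in the paper; this is needed to get simultaneous containers for $X_1$ and $X_2$ plus a ``core'' $B\subset X_1+X_2$.)

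\textbf{The counting step does not use Freiman/Lev--Smeliansky, and cannot.} You propose to show $|C_1+C_2|\le(1+o(1))m$ and then invoke an asymmetric Freiman-type inverse theorem in $G$ to bound each $|C_i|$ by $\frac{s_i}{s_1+s_2}(m+\beta)$. Neither half of this works. The container triples produced do \emph{not} have small sumset; the property they satisfy is that either both parts are tiny or at most $\eps^2|A_1||A_2|$ pairs $(a_1,a_2)$ sum outside $B$. Feeding that into the \emph{supersaturation} result (Corollary~\ref{cor:supersat}, which is Pollard/Hamidoune--Serra type) gives the bound $|A_1|+|A_2|\le(1+o(1))(m+\beta)$ on the \emph{sum} of the two container sizes, and the $\beta$ term comes from the subgroup obstruction $\alpha(U,V)$ in that Pollard-type inequality, not from a Freiman inverse theorem. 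No individual bounds $|C_i|\le\frac{s_i}{s_1+s_2}(m+\beta)$ are available or needed: a container can be lopsided. Instead one bounds $\binom{|A_1|}{s_1}\binom{|A_2|}{s_2}$ by its maximum over $|A_1|+|A_2|\le(1+2\eps)(m+\beta)$, which by a direct convexity argument is attained at the proportional split $|A_i|=\frac{s_i}{s_1+s_2}\cdot(1+2\eps)(m+\beta)$. Moreover, in an \emph{arbitrary} abelian group $G$ there is no Lev--Smeliansky-type theorem with the sharp $(1+o(1))$ constant you need; Green--Ruzsa's theorem would give coset-progression structure but with bounds far too lossy to absorb into $2^{o(s_2)}$. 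This is precisely why the paper proves the structural Theorem~\ref{thm:easymain_structure} only over $\Z$ (where it does need the asymmetric robust $3k-4$ theorem) but proves the counting Theorem~\ref{thm:easymain_counting} over all abelian $G$ using supersaturation alone. The $\log s_2$ factor in the range of $m$ is, as you guessed, the price of working in a general group, but it enters through a binomial-ratio estimate in the final counting, not through the container iteration per se.
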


In groups that allow a result similar to Freiman's $3k-4$ theorem one can get rid of the $\log s_2$ term in the upper bound of $m$ in Theorem~\ref{thm:easymain_counting}.
Specific examples would be the integers $G=\Z$ or the integers modulo some prime $p$, that is, $G=\Z/p\Z$.
An example discussed in~\cite{C2019} that can easily be adapted to the asymmetric case shows that the range of $m$ for which Theorem~\ref{thm:easymain_counting} holds cannot be improved to $m = \Omega(s_2^2/\log n)$.
It is not clear whether the same holds true for Theorem~\ref{thm:easymain_structure}, and an interesting question would be to investigate whether it might be true for any $m=o(s_2^2)$.
Furthermore, note that as will become apparent in the proofs, the requirement $s_i=\omega((\log n)^3)$ mentioned in the abstract is not used directly.
Rather, it exists because otherwise the statements in Theorems~\ref{thm:easymain_structure} and~\ref{thm:easymain_counting} would become vacuous, since no $m$ satisfying the bounds could exist.

The proofs of both Theorem~\ref{thm:easymain_structure} and Theorem~\ref{thm:easymain_counting} will make use of the method of hypergraph containers, building upon the approach by Campos~\cite{C2019}.
The usual structure of this method is to essentially have two separate ingredients, the first being a result about independent sets in hypergraphs following certain degree conditions, and the second being supersaturation and stability results.
Our proof will also follow along these lines.

To that end, in Section~\ref{sec:containers} we will present with Theorem~\ref{thm:container} a new variant of the asymmetric container lemma introduced by Morris, Samotij and Saxton~\cite{MSS2018}, which is extended to a multipartite version. 
This extension does not require new ideas but the verification of the statement involves some technical issues so that for completeness we include a full proof in Section~\ref{sec:containerlemmaproof}. 
The main focus of Section~\ref{sec:containers} will instead be the application of this multipartite version of the container lemma to prove Theorem~\ref{thm:containerfamily}, a statement which provides a relatively small family of sets which essentially contain sets whose product has bounded cardinality in a (not necessarily abelian) group. 
We chose to state Theorem~\ref{thm:containerfamily} in more generality than is required for our results, namely for general groups, as it follows in a natural way from the container lemma and can be used in further applications.
In Section~\ref{sec:stabilitysupersat} we will then develop the required supersaturation and stability results. 
Unlike previous works on these problems, which use variants of a theorem by Pollard~\cite{P1974} on averages of the representation function, for the stability results we use a robust version of Freiman's $3k-4$--theorem by Shao and Xu~\cite{SX2019}, which itself built on earlier work by Lev~\cite{L2000}. 
The $3k-4$--theorem of Freiman, or rather its asymmetric version by Lev and Smeliansky~\cite{LS1995}, states that if the sumset of two sets is sufficiently small, then the sets themselves are dense in arithmetic progressions. 
Shao and Xu extended the result to the sum of two sets along the edges of a sufficiently dense graph, its robust version.
The statement of Shao and Xu applies to two sets with the same cardinality, and both its statement and proof actually rely on this condition, a circumstance which prevents it from being used in our context directly. 
We give a similar robust version, Theorem~\ref{prop:freiman3k4robust}, that   can be seen as a truly asymmetric robust version of the Lev--Smeliansky theorem and may be of independent interest. 
It requires a different proof than the one given in~\cite{SX2019}, although a key result in both proofs is an appropriate robust version of Kneser's addition theorem, Theorem~\ref{prop:robustkneser} in the current work.
To our knowledge this is the first statement of this kind which is applicable to pairs of sets of completely unrelated cardinalities, and thus represents progress on a conjecture of Lev~\cite{L2000b}.
In Section~\ref{sec:mainproofs} we combine the two ingredients to prove Theorems~\ref{thm:approximatestructure} and~\ref{thm:counting_abeliangroups}, which are more technical versions of our main theorems.
The paper concludes with some final remarks in Section~\ref{sec:concluding}, both regarding specific aspects of the current results as well as discussions on some open problems.

\section{The Method of Hypergraph Containers}\label{sec:containers}

One of the key techniques used in~\cite{C2019} is based on an asymmetric version of the container lemma introduced by Morris, Samotij and Saxton~\cite{MSS2018} which allows for applications to forbidden structures with some sort of asymmetry. 
This asymmetry can be interpreted as considering bipartite hypergraphs.
The first key component to proving Theorem~\ref{thm:easymain_counting} is to further extend this bipartite version to a multipartite one as follows.

Let $r$ be a positive integer.
For an $r$-vector $x=(x_1,\dots,x_r)$ with nonnegative integer entries, we call an $r$-partite hypergraph $\calH$ with vertex set $V(\calH)=V_1\cup\dots\cup V_r$ \emph{$x$-bounded} if $|E\cap V_i|\leq x_i$ for every hyperedge $E\in E(\calH)$ and every $1\leq i\leq r$.
Denote by $\calI$ the family of independent sets of $\calH$, and for any $m\in \N$, define
\[\calI_{m}(\calH):=\left\{I: I\in\calI\text{ and } |I\cap V_r|\geq |V_r|-m\right\}.\] 
For a subset of vertices $L\subset V(\calH)$, the codegree is defined as $d_{\calH}(L)=|\{E\in E(\calH) : L\subset E\}|$.
Also, given a vector $v=(v_1,v_2,\ldots,v_r)\in \Z^r$, denote \[\Delta_v(\calH):=\max\{d_\calH(L): L\subset V(\calH),~|L\cap V_i|=v_i,~1\leq i\leq r\}.\]
Finally, for any vector $y$, $|y|$ will denote its $1$-norm $\sum |y_i|$.
\begin{thm}
\label{thm:container}
For all non-negative integers $r,r_0$ and each $R> 0$ the following holds. Suppose that $\calH$ is a non-empty $r$-partite $(1,\dots,1,r_0)$-bounded hypergraph with $V(\calH)=V_1\cup V_2\cup \ldots\cup V_r$, $m\in \N$, and $b,q$ are positive integers with $b\leq \min\{|V_1|,\dots,|V_{r-1}|,m\}$ and $q\leq m$, satisfying
\begin{equation}\label{deg cond}
\Delta_{y}(\calH)\leq R \left(m^{y_r}\prod_{i=1}^{r-1}|V_i|^{y_i}\right)^{-1}b^{|y|-1}e(\calH)\left(\frac{m}{q}\right)^{\mathds{1}[y_r>0]}
\end{equation}
for every vector $y=(y_1,y_2,\ldots,y_r)\in \left(\prod_{i=1}^{r-1}\{0,1\}\right)\times \{0,1,\ldots,r_0\}$. Then there exists a family $\mathcal{S}\subset \prod_{i=1}^r \binom{V_i}{\leq b}$ and functions $f\colon \mathcal{S}\to \prod_{i=1}^r 2^{V_i}$ and $g\colon \calI_m(\calH)\to \mathcal{S}$, such that, letting $\delta=2^{-(r_0+r-1)(2r_0+r)}R^{-1}$, the following statements hold.
\begin{enumerate}[label=(\roman*)]
\item If $f(g(I))=(A_1,A_2,\ldots,A_r)$ with $A_i\subset V_i$, then $I\cap V_i\subset A_i$ for all $1\leq i\leq r$.\label{item:container-1}
\item  For every $(A_1,A_2,\ldots,A_r)\in f(\mathcal{S})$, either $|A_i|\leq (1-\delta)|V_i|$ for some $1\leq i< r$, or $|A_r|\leq |V_r|-\delta q$.\label{item:container-2}
\item If $g(I)=(S_1,S_2,\ldots,S_r)$ and $f(g(I))=(A_1,A_2,\ldots,A_r)$, then $S_i\subset I\cap V_i$ for all $1\leq i\leq r$. Furthermore, $|S_i|>0$ only if $|A_j|\leq (1-\delta)|V_j|$ for some $i\leq j< r$ or $|A_r|\leq |V_r|-\delta q$.\label{item:container-3}
\end{enumerate}
\end{thm}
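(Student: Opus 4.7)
The plan is to adapt the iterative ``scythe'' algorithm of Morris--Samotij--Saxton to the multipartite setting. Given an input $I \in \calI_m(\calH)$, we run a deterministic algorithm that maintains a sequence of container tuples $(A_1^{(t)}, \ldots, A_r^{(t)})$ (initialized to $(V_1, \ldots, V_r)$) together with fingerprint tuples $(S_1^{(t)}, \ldots, S_r^{(t)})$ (initialized to the empty tuple), satisfying the invariants $S_i^{(t)} \subseteq I \cap V_i \subseteq A_i^{(t)}$. The function $g$ returns the final fingerprint, and $f$ reconstructs the final container from the fingerprint alone by replaying the same algorithm using only information computable from $S$ and the current containers.

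The main loop would proceed by priority: at each iteration one searches for the ``heaviest'' codegree configuration present in $\calH$ restricted to $A_1^{(t)} \cup \ldots \cup A_r^{(t)}$ that is compatible with $I$. Concretely, we iterate over vectors $y \in \bigl(\prod_{i<r}\{0,1\}\bigr) \times \{0,\ldots,r_0\}$ in a fixed order and look for a tuple $L \subseteq I$ with $|L \cap V_i| = y_i$ whose codegree in the currently induced hypergraph is large. The vertices of $L$ are added to the appropriate $S_i^{(t)}$; then every vertex $v$ whose codegree with some subset of the updated fingerprint exceeds an explicit threshold is deleted from the relevant $A_j^{(t)}$, since such a $v$ would, if it actually belonged to $I$, witness a hyperedge inside $I$, contradicting independence. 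The degree condition~\eqref{deg cond} is calibrated so that, whenever neither stopping condition of item~\ref{item:container-2} has been triggered, such a high-codegree $L$ must exist, and its removal shrinks some $A_j^{(t)}$ by a significant fraction---of $|V_j|$ if $j<r$, or of $q$ if $j=r$.

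A key design choice is the asymmetric treatment of $V_1, \ldots, V_{r-1}$ (on which every edge has at most one vertex) versus $V_r$ (up to $r_0$ vertices). For the first $r-1$ classes the scheme mirrors the bipartite Morris--Samotij--Saxton algorithm; what is new is only that we have $r-1$ ``singleton'' sides to schedule, which we handle by fixing a lexicographic order on $(y_1,\ldots,y_{r-1})$. For $V_r$ we need the full two-parameter bound involving the factor $(m/q)^{\mathds{1}[y_r>0]}$, because we only care about $I$ up to deficit $m$ in $V_r$, so the target depletion quantity for $A_r$ is $\delta q$ rather than a constant fraction of $|V_r|$. The constant $\delta = 2^{-(r_0+r-1)(2r_0+r)}R^{-1}$ should come out of multiplying the per-step shrinkage factors across the $r_0+r-1$ possible move types, with the exponent reflecting the nested case-splits needed to select the correct move.

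As the authors themselves note, the main obstacle is bookkeeping rather than conceptual novelty. Three things will need careful verification: (a) at any step that does not satisfy a stopping condition, a pigeonhole application of~\eqref{deg cond} yields a usable tuple $L \subseteq I$; (b) the fingerprint grows by a controlled amount per step, so that $|S_i| \le b$ holds overall, which follows by amortizing the shrinkage against the initial container sizes $|V_i|$ and $q$; and (c) the reconstruction map $f$ really depends only on the fingerprint---this requires that the threshold-based deletion rule at each step, together with the ordering of move types, be computable from the current containers without further consulting $I$. The most delicate point, and the place where the multipartite generalization deviates most from the already-published bipartite version, is ensuring that the priority scheme across move types stays consistent with the choice of $y$ vector in~\eqref{deg cond}, so that every step genuinely contributes its claimed shrinkage and the algorithm terminates within the allotted number of iterations.
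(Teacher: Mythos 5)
Your high-level plan---an iterative, deterministic container construction scheduled over ``shape vectors,'' tracking fingerprints $S_i \subseteq I \cap V_i$ of size at most $b$, and stopping once some $A_i$ has shrunk by a $\delta$-fraction---is in the right spirit. However, two of your concrete design choices are wrong, not merely different from the paper's proof. (1) The vertex-deletion rule is unsound. You delete $v$ from $A_j$ when the codegree of $\{v\}\cup T$, for $T$ a subset of the current fingerprint, exceeds a threshold, claiming this would ``witness a hyperedge inside $I$.'' It wouldn't: large $d_{\calH}(\{v\}\cup T)$ says many hyperedges extend $\{v\}\cup T$, but each such hyperedge may have its remaining vertices outside $I$; it does not say $\{v\}\cup T$ is itself a hyperedge. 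A vertex $v\in I\cap V_j$ can easily have large codegree with the fingerprint, so your rule can expel an element of $I$ from $A_j$ and thereby violate property~(i). The paper never deletes by codegree; it deletes only those processed vertices that turn out to lie \emph{outside} $I$---the set $W$---so $A_{i'} = V_{i'}\setminus\{u_j : j\in W\}$ automatically contains $I\cap V_{i'}$. (2) The selection rule ``find the heaviest codegree tuple $L\subseteq I$'' cannot yield a replayable map $f$: a reconstruction from $S$ alone has no way to determine which candidate tuple actually lies in $I$. The paper's algorithm instead always picks the globally max-degree vertex $u_j$ of the current part $V_{i'}$, whether or not $u_j\in I$; membership in $I$ only decides whether $j$ joins $S$ or $W$. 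This is what makes the replay purely fingerprint-driven (Observation~5.5) and what guarantees progress either way: $u_j\in I$ grows the link hypergraph $\calG_*$, and $u_j\notin I$ grows $W$. Your ``pigeonhole'' step simply stalls if $I$ happens to avoid all heavy codegree tuples, a situation the degree hypotheses do not preclude.

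Beyond these two issues, the statements that actually carry the theorem---Lemma~5.6 (each successive link hypergraph $\calG_*$ inherits the right codegree bounds $\Delta_{v'}^{x'}$) and Lemma~5.7 (either $\calG_*$ retains an $\alpha$-fraction of $e(\calH)$, or $|W|\geq\delta|V_{i'}|$, resp.\ $|W|\geq\delta q$)---are not engaged by your sketch at all. Without them there is no route to verifying property~(ii) with the stated $\delta=2^{-(r_0+r-1)(2r_0+r)}R^{-1}$, nor to the $b$-bound on the fingerprint sizes, both of which come from the interplay between the stopping rule $|S|=b$, the edge-density thresholds $\beta_s$, and those two lemmas.
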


The proof of Theorem~\ref{thm:container} is lengthy and very similar to the original asymmetric lemma of Morris, Samotij and Saxton~\cite{MSS2018}, but for the sake of completeness we present it in full in Section~\ref{sec:containerlemmaproof}.
For now, let us see how to successively apply it to construct the container family we want to work with.
We will be making use of the following hypergraph construction.
For a group $G$ and finite subsets $A,B,C\subset G$, define the $3$-partite and $(1,1,1)$-bounded hypergraph $\calH(A,B,C)$ in the following way.
The vertex set is $A\sqcup B\sqcup C$ and $\{a,b,c\}$ is a hyperedge if $a\in A$, $b\in B$, $c\in C$, and $ab=c$.
Note that the sets $A,B,C$ need not actually be disjoint.

\begin{thm}\label{thm:containerfamily}
Let $G$ be a group and $\epsilon>0$.
Suppose $n,m$ and $s_1\leq s_2$ are integers such that $\log n \leq s_2 \leq m \leq s_1^2\log n$.
Let $F_1,F_2\subset G$ be subsets of $G$ of cardinality $|F_1|=|F_2|=n$ with product set $F=F_1F_2$.
Then there exists a family $\calA\subset 2^{F_1}\times 2^{F_2}\times 2^{F}$ of triples $(A_1,A_2,B)$ of size 
\begin{equation}\label{eq:container_family_size}
|\calA| \leq \exp\left(2^{20}\epsilon^{-2}\sqrt{m}(\log n)^{3/2} \right)
\end{equation} 
such that the following hold:
\begin{enumerate}
\item\label{item:cont_family_indCont} Let $X_1\subset F_1$, $X_2\subset F_2$ with $|X_i|=s_i$ and $|X_1X_2|\leq m$. Then there exists a triple $(A_1,A_2,B)\in\calA$ such that $X_1\subset A_1$, $X_2\subset A_2$ and $B\subset X_1X_2$.
\item\label{item:cont_family_contStruct} Let $(A_1,A_2,B)\in\calA$. Then either $\max\{|A_1|,|A_2|\}\leq m/\log n$, or there are at most $\epsilon^2 |A_1||A_2|$ tuples $(a_1,a_2)\in A_1\times A_2$ such that $a_1a_2\notin B$.
\end{enumerate}  
\end{thm}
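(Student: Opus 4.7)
The approach is to iteratively apply the multipartite container lemma (Theorem~\ref{thm:container}) to the $3$-partite $(1,1,1)$-bounded hypergraph $\calH = \calH(F_1, F_2, F)$ with $F = F_1 F_2$. For any admissible pair $(X_1, X_2)$, the set $I = X_1 \cup X_2 \cup (F \setminus X_1 X_2)$ lies in $\calI_m(\calH)$: any edge $\{a_1, a_2, c\} \subset I$ would force $c = a_1 a_2 \in X_1 X_2 \cap (F \setminus X_1 X_2) = \emptyset$, and $|I \cap F| = |F| - |X_1 X_2| \geq |F| - m$. The family $\calA$ will consist of triples $(A_1, A_2, F \setminus A_3)$, with $(A_1, A_2, A_3)$ running over the containers produced by the iterative procedure described below.

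I would pick parameters $R = \epsilon^{-2}$, $q = \lceil m/\log n \rceil$ and $b = \lceil \sqrt{m/\log n}\rceil$, yielding $\delta = 2^{-15}\epsilon^2$. The degree bounds~\eqref{deg cond} are straightforward to check for $\calH$ and every restricted sub-hypergraph $\calH[A_1 \cup A_2 \cup A_3]$ encountered during the iteration: since $c = a_1 a_2$ determines any one of the three coordinates from the other two, codegrees of vertex pairs from distinct parts are at most $1$, while single-vertex codegrees are bounded by the opposite $|A_i|$ or the minimum of the two. Under the running assumptions $e(\calH[A_1 \cup A_2 \cup A_3]) \geq \epsilon^2 |A_1||A_2|$ and $\max\{|A_1|, |A_2|\} \geq q$, a short case check over $y \in \{0,1\}^2 \times \{0,1\}$ shows that every required inequality holds with these parameters.

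Starting from $(A_1^{(0)}, A_2^{(0)}, A_3^{(0)}) = (F_1, F_2, F)$, at each step $t$ I would apply Theorem~\ref{thm:container} to $\calH[A_1^{(t)} \cup A_2^{(t)} \cup A_3^{(t)}]$, extracting a fingerprint in a set of size at most $\binom{n}{\leq b}^2 \binom{|F|}{\leq b} \leq \exp(O(b\log n))$ together with new containers $(A_1^{(t+1)}, A_2^{(t+1)}, A_3^{(t+1)})$, and stop as soon as either $\max\{|A_1|, |A_2|\} \leq m/\log n$ or $e(\calH[A_1 \cup A_2 \cup A_3]) \leq \epsilon^2 |A_1||A_2|$. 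Conclusion~(ii) of the container lemma guarantees that at each step some $|A_i|$ ($i \in \{1,2\}$) shrinks by a factor $(1-\delta)$ or $|A_3|$ drops by $\delta q$; these can happen at most $O(\delta^{-1} \log n)$ times per coordinate, and at most $m/(\delta q) = O(\delta^{-1}\log n)$ times for $A_3$, the latter because $F \setminus A_3^{(t)} \subset X_1 X_2$ always has cardinality at most $m$. Hence the total number of iterations is $T = O(\epsilon^{-2} \log n)$. The crucial point that keeps the procedure running is that $\min\{|A_1^{(t)}|, |A_2^{(t)}|\} \geq b$ at every step: this is precisely where the hypothesis $m \leq s_1^2 \log n$ enters, since $X_i \subset A_i^{(t)}$ forces $|A_i^{(t)}| \geq s_1 \geq \sqrt{m/\log n} = b$.

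Multiplying the per-step fingerprint bounds gives at most $\exp(O(Tb\log n)) = \exp(O(\epsilon^{-2}\sqrt{m}(\log n)^{3/2}))$ distinct container sequences, which matches~\eqref{eq:container_family_size} up to constants. Setting $B := F \setminus A_3^{(T)}$ at termination, part~\ref{item:cont_family_indCont} follows from $X_i \subset A_i^{(T)}$ and $F \setminus X_1 X_2 \subset A_3^{(T)}$ (so $B \subset X_1 X_2$), while part~\ref{item:cont_family_contStruct} follows from the stopping rule, since edges of $\calH[A_1 \cup A_2 \cup A_3]$ correspond exactly to pairs $(a_1, a_2) \in A_1 \times A_2$ with $a_1 a_2 \notin B$. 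The main technical obstacle is the careful verification that the degree conditions remain valid throughout the iteration with a single fixed choice of $b$, $q$, and $R$, while simultaneously tracking the two stopping criteria and ensuring the lemma never fails to apply.
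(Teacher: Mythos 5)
Your proposal is correct and takes essentially the same approach as the paper: build the hypergraph $\calH(F_1,F_2,F)$, choose $R=\epsilon^{-2}$, $q=m/\log n$, $b=\sqrt{q}$, verify the codegree conditions using the linear relation $a_1a_2=c$ together with the running assumptions, iterate Theorem~\ref{thm:container}, and bound $|\calA|$ by (number of iterations) $\times$ (per-step fingerprints). The only presentational difference is that the paper organizes the iteration explicitly as a tree with four leaf conditions (including $|A_i|<s_i$ and $|A_3|<|F|-m$) so that the container lemma is only applied at internal nodes where all hypotheses are guaranteed, whereas you argue that along the path traced by any valid pair $(X_1,X_2)$ one always has $|A_i^{(t)}|\ge s_i\ge b$ — which is an equivalent way of justifying applicability, since branches violating these bounds carry no valid pairs and can simply be pruned.
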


Let us remark that the second part in property~\ref{item:cont_family_contStruct} states that the $3$-partite hypergraph $\calH(A_1,A_2,(A_1A_2)\setminus B)$ has few edges.
Before going into the specifics, let us also mention that in some sense proving Theorem~\ref{thm:containerfamily} from Theorem~\ref{thm:container} is a by now standard application of the container framework.
The main idea is that as long as the conditions of Theorem~\ref{thm:container} are met, one can continue to apply it to the induced sub-hypergraphs obtained from the previous application.
Since independent sets in the original hypergraph stay independent, one thus builds a tree of hypergraphs where the leaves will correspond to the final family one wants to obtain.

\begin{proof}
We will construct a rooted tree $\calT$ with root $\calH(F_1,F_2,F)$ and leaves $\calH(A_1,A_2,A_3)$ such that one of the following properties holds:
\begin{enumerate}[label=(\roman*)]
\item\label{item:cont_family_smallCont} $|A_1|<s_1$ or $|A_2|<s_2$,
\item\label{item:cont_family_smallMax} $\max\{|A_1|,|A_2|\}< m/\log n$,
\item\label{item:cont_family_smallLast} $|A_3|<|F|-m$, or
\item\label{item:cont_family_fewEdges} $\calH(A_1,A_2,A_3)$ has less than $\epsilon^2 |A_1||A_2|$ hyperedges.
\end{enumerate}
The end-goal is to essentially have $\calA$ be the subset of the leaves of $\calT$ that correspond to properties~\ref{item:cont_family_smallMax} and~\ref{item:cont_family_fewEdges}.

We construct $\calT$ in the following way.
Suppose we are given a vertex $\calH=\calH(V_1,V_2,V_3)$ of $\calT$ with $|V_i|\geq s_i$ for all $i=1,2$, $\max\{|V_1|,|V_2|\}\geq m/ \log n$, $|V_3|\geq |F|-m$ and $e(\calH)\geq \epsilon^2 |V_1||V_2|$. 
We apply Theorem~\ref{thm:container} with parameters $R=\epsilon^{-2}$, $q=m/\log n$ and $b=\sqrt{q}$.
Note that $b\leq \min s_i$ because of our upper bound on $m$.
Let us show that these choices indeed satisfy the codegree conditions of the container lemma.

Let $v=(v_1,v_2,v_3)\in \{0,1\}^3$.
The edges of the hypergraph are defined by a linear relation, so in addition to the $|v|$ fixed components of a hyperedge, one loses one additional degree of freedom to choose the remaining $h+1-|v|$ components.
In particular, for $|v|\in\{2,3\}$ we see that $\Delta_v(\calH)= 1$.
Because of the assumption that $e(\calH)\geq \epsilon^2 \prod |V_1||V_2|$, we see that in order to prove~\eqref{deg cond}, it suffices that the parameters are chosen such that
\begin{equation}\label{eq:cont_family_sufficient}
\Delta_v(\calH) \leq \epsilon^2Rq^{-v_3}|V_1|^{1-v_1}|V_2|^{1-v_2}b^{|v|-1}.  
\end{equation}
We now confirm that~\eqref{eq:cont_family_sufficient} indeed holds for all possible choices of $v$, beginning with $v=(1,1,1)$.
As stated before we have $\Delta_v(\calH)=1$ and we see that~\eqref{eq:cont_family_sufficient} simplifies to \[1 \leq \epsilon^2 R b^2/q\] which is true for our choices of parameters.

We next move to vectors $v$ such that $|v|=2$.
Again, just like before it holds that $\Delta_v(\calH)=1$ and it is straight-forward to check that~\eqref{eq:cont_family_sufficient} reduces to \[1 \leq \epsilon^2 R b \min\{|V_1|,|V_2|,q\}/q.\]
This inequality clearly holds when the minimum is $q$ (it is a weaker requirement than the $v=(1,1,1)$ case).
When the minimum is $\min\{|V_1|,|V_2|\}$ instead it also holds since $|V_1|,|V_2|\geq s_1\geq b = \sqrt{q}$.

Finally we check the vectors $v\in\{0,1\}^3$ with $|v|=1$.
If $v=(0,0,1)$ we see that $\Delta_v(\calH)\leq\min\{|V_1|,|V_2|\}$ and hence~\eqref{eq:cont_family_sufficient} reduces to \[\min\{|V_1|,|V_2|\} \leq \epsilon^2 R |V_1||V_2|/q\] which is true since $\max\{|V_1|,|V_2|\}\geq q$ by assumption.
For $v=(1,0,0)$ (resp. $v=(0,1,0)$) it holds that $\Delta_v(\calH)\leq |V_2|$ (resp. $\leq |V_1|$) and so~\eqref{eq:cont_family_sufficient} reduces to \[|V_i| \leq \epsilon^2 R |V_i|\qquad\text{for $i=1,2$,}\] which again is true for our choice of parameters.

So we see that $R=\epsilon^{-2}$, $q=m/\log n$ and $b=\sqrt{q}$ are indeed valid choices.
Hence, by Theorem~\ref{thm:container}, there exists a family $\calC\subset 2^{V_1}\times 2^{V_2} \times 2^{V_3}$ of size at most 
\begin{equation}\label{eq:max_num_children}
|\calC| \leq \prod_{i=1}^{3} \binom{|V_i|}{\leq b}\leq b^3\binom{n^2}{b}\binom{n}{b}^2 \leq n^{4b} = \exp\left(4 \sqrt{m \log n}\right),
\end{equation}
such that for each $I\in \calI_m(\calH)$ there exist $(A_1,A_2,A_3)\in\calC$ with $I\cap V_i \subset A_i$ for all $i\in[3]$, and either $|A_i|\leq (1-\delta)|V_i|$ for some $i=1,2$, or $|A_3|\leq |V_3|-\delta q$, with $\delta=\epsilon^2 2^{-15}$.
For each $(A_1,A_2,A_3)\in\calC$, add $\calH(A_1,A_2,A_3)$ as a child of $\calH$ in $\calT$.
In order to bound the number of leaves of $\calT$, we will first bound its height.
\begin{claim}\label{claim:treedepth}
The tree $\calT$ has height at most $d = 2^{18}\epsilon^{-2}\log n $.
\end{claim}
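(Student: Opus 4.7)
The plan is to trace an arbitrary root-to-leaf path and bound its length by counting, step by step, which of the three coordinates is forced to shrink. Recall that by property~\ref{item:container-2} of Theorem~\ref{thm:container} applied at each internal node $\calH(V_1,V_2,V_3)$, every child $\calH(A_1,A_2,A_3)$ produced by the construction satisfies at least one of
\[
|A_1|\leq (1-\delta)|V_1|,\qquad |A_2|\leq (1-\delta)|V_2|,\qquad |A_3|\leq |V_3|-\delta q,
\]
where $\delta=\epsilon^2 2^{-15}$ and $q=m/\log n$. Fix a root-to-leaf path and label each edge by (at least) one such decrease it witnesses; this partitions the edges into three types, and the path length is bounded by the sum of the three type counts.

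For type-3 edges (decreases of the third coordinate), I would use an additive argument. Along the path the sequence of $|V_3|$ is non-increasing; it starts at $|F|$, and by the branching rule can decrease only while $|V_3|\geq |F|-m$. Each type-3 edge drops $|V_3|$ by at least $\delta q$, so there are at most $m/(\delta q)=\log n/\delta$ such edges.

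For type-1 edges (and symmetrically for type-2) I would use a multiplicative argument. The sequence of $|V_1|$ is non-increasing, starts at $n$, and is multiplied by at most $(1-\delta)$ on each type-1 edge. Since the tree continues to branch only while $|V_1|\geq s_1\geq 1$, after $k$ type-1 edges we have $1\leq |V_1|\leq n(1-\delta)^k\leq n e^{-\delta k}$, giving $k\leq (\log n)/\delta$. The same bound holds for type-2 edges.

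Adding the three contributions yields a total path length of at most
\[
\frac{3\log n}{\delta}\;=\;3\cdot 2^{15}\,\epsilon^{-2}\log n\;\leq\;2^{18}\,\epsilon^{-2}\log n,
\]
which is the claimed bound on the height of $\calT$. I do not anticipate any serious obstacle: the argument is a direct accounting, and the only point to watch is that even if a single step witnesses several simultaneous decreases, we need only charge it to one type, so the sum of the three type counts is indeed an upper bound on the path length.
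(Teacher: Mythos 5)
Your proof is correct and takes essentially the same approach as the paper. The paper argues by pigeonhole—after $d$ steps one coordinate must have shrunk at least $d/3$ times, which is then shown impossible for $d = 2^{18}\epsilon^{-2}\log n$—whereas you bound the count of each of the three edge types separately and add; these are the same accounting argument in dual form, with identical constants.
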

\begin{proof}
Suppose $\calH(A_1,A_2,A_3)$ is a vertex of $\calT$ of depth $d$.
Recall that after each application of Theorem~\ref{thm:container}, one component shrunk, hence after $d$ applications one of them shrunk at least $d/3$ times.
Since we started at $\calH(F_1,F_2,F)$ and $\delta=2^{-15}\epsilon^2$, either \[|A_3| \leq |F| - \frac{d\delta q}{3} = |F| - \frac{d\epsilon^2 m}{3\cdot 2^{15}\log n} < |F| - m,\] or for one $i=1,2$, \[|A_i| \leq (1-\delta)^{d/3}n \leq \exp(-\delta d/3)n< 1,\] and so this vertex has no children.
\end{proof}
We will now define the family $\calA$ formally.
If $\calL$ is the set of leaves of $\calT$, let \[\calA = \left\{(A_1,A_2,B) : \begin{array}{c}\calH(A_1,A_2,F\setminus B)\in \calL,\\|A_1|\geq s_1,\, |A_2|\geq s_2\text{ and }|B|\leq m\end{array}\right\}.\]
Since every tuple $(A_1,A_2,B)$ in this family corresponds to a leaf with $|A_i|\geq s_i$ for every $i=1,2$ and $|F\setminus B|\geq |F|-m$, we must have either $\max |A_i|\geq m/\log n$ or the corresponding hypergraph must have less than $\epsilon^2 |A_1||A_2|$ edges, that is, there are less than $\epsilon^2 |A_1||A_2|$ tuples $(a_1,a_2)\in A_1\times A_2$ such that $a_1a_2\notin B$. 
In any case, \ref{item:cont_family_contStruct} holds.

We now check that property~\ref{item:cont_family_indCont} holds.
Let $X_1,X_2$ with $X_i\subset F_i$, such that $|X_i|=s_i$ for $i=1,2$ and $|X_1X_2|\leq m$.
We see that $X_1\cup X_2 \cup (F\setminus (X_1X_2))$ is contained in $\calI_m\left(\calH(F_1,F_2,F)\right)$, and so by the properties of the containers there is a path in $\calT$ from the root to a leaf $\calH(A_1,A_2,F\setminus B)$ such that $X_i\subset A_i$ and $B\subset X_1X_2$.
By the size bounds for $X_i$ and $X_1X_2$ it is clear that this leaf must correspond to a triple in $\calA$.

Finally, the size of $\calA$ is at most the $d$th power of the maximal number of children of a vertex in $\calT$, and so by \eqref{eq:max_num_children} we see that \[|\calA| \leq \exp\left(4d \sqrt{m\log n}\right) \leq \exp\left(2^{20}\epsilon^{-2}\sqrt{m}(\log n)^{3/2}\right),\] and so \eqref{eq:container_family_size} holds.
\end{proof}

\section{Supersaturation and Stability Statements}\label{sec:stabilitysupersat}

For any abelian group $G$ and finite subsets $U,V\subset G$, define \[\alpha(U,V) = \max\{|V'| : V'\subset G,\, |V'|\leq |V|,\, |\langle V'\rangle|\leq |U|+|V|-|V'|\}.\]
Here, $\langle V' \rangle$ denotes the subgroup of $G$ generated by $V'$.
For an element $x\in G$, the number of \emph{representations} of $x$ in $U+V$ will be denoted by $r_{U,V}(x) = |\{(u,v)\in U\times V : u+v=x\}|$.

The following theorem is a variant, due to Campos~\cite{C2019}, of a generalization of Pollard's theorem proved by Hamidoune and Serra~\cite{HaSe2008}.

\begin{prop}[Theorem 3.2 in~\cite{C2019}]\label{prop:thm3_2}
Let $G$ be an abelian group, $t$ be a positive integer and $U,V\subset G$ satisfying $t\leq |V|\leq |U|<\infty$.
Then
\begin{equation}\label{eq:3_2}
\sum_{x\in U+V}\min(r_{U,V}(x),t) \geq t(|U|+|V|-t-\alpha),
\end{equation}
where $\alpha=\alpha(U,V)$.
\end{prop}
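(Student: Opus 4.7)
The plan is to combine a super-level set decomposition of the representation function with Kneser's addition theorem, then package the subgroup obstructions produced by Kneser into the quantity $\alpha(U,V)$. First I would rewrite the left-hand side of~\eqref{eq:3_2} using the identity $\min(r,t) = \sum_{i=1}^{t} \mathds{1}[r \geq i]$. Setting $W_i := \{x \in U+V : r_{U,V}(x) \geq i\}$, this gives
\[
\sum_{x \in U+V} \min(r_{U,V}(x), t) = \sum_{i=1}^{t} |W_i|,
\]
and since the super-level sets form a decreasing chain $W_1 \supseteq W_2 \supseteq \cdots \supseteq W_t$, the task reduces to producing lower bounds for $|W_i|$ that sum to at least $t(|U|+|V|-t-\alpha)$.

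Next I would bound each $|W_i|$ via Kneser's theorem applied to a pair obtained by iterated Dyson $e$-transforms of $(U,V)$. Starting from $(U_1,V_1)=(U,V)$, at each step I would pick an element contributing only to the thin shell $W_j\setminus W_{j+1}$ and redistribute it so that after $i-1$ steps the resulting pair $(U_i,V_i)$ satisfies $|U_i|+|V_i|\geq |U|+|V|-(i-1)$ and $U_i+V_i \subseteq W_i$. Applying Kneser to $(U_i,V_i)$ then yields $|W_i| \geq |U_i+H_i|+|V_i+H_i| - |H_i| \geq |U|+|V|-(i-1)-|H_i|$, where $H_i$ is the stabilizer of $U_i+V_i$; the term $-|H_i|$ is precisely the subgroup obstruction that must be absorbed into $\alpha(U,V)$.

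The last step, which I expect to be the main obstacle, is to consolidate the stabilizers $\{H_i\}_{i=1}^t$ into a single witness for $\alpha(U,V)$. In Kneser's extremal case, $U_i+V_i$ is a union of $H_i$-cosets, which forces $V_i$ (and hence $V$ itself up to the few elements removed by Dyson steps) to have a large subset $V'$ with $\langle V'\rangle$ controlled: one gets $|\langle V'\rangle|\leq |U|+|V|-|V'|$ by exactly the Kneser deficit, so $|V'|\leq \alpha(U,V)$ by definition. The delicate point is showing that a single $V'$ suffices across all levels, rather than one per level $i$; the expected mechanism is that after a bounded number of Dyson iterations the stabilizers $H_i$ stabilize to a canonical subgroup $H$, so all obstructions are uniformly bounded by $|H|\leq \alpha(U,V)$. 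Summing the level bounds $|W_i|\geq |U|+|V|-(i-1)-\alpha$ over $1\leq i\leq t$, the $(i-1)$ terms telescope to $\binom{t}{2}\leq t^2/2$ which combines with the discarded ``slack'' in the Dyson construction to give the clean $-t(t+\alpha)$ correction, producing~\eqref{eq:3_2}.
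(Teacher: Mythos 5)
The paper does not prove Proposition~\ref{prop:thm3_2}: it is imported verbatim from Campos~\cite{C2019} (and ultimately rests on the Hamidoune--Serra generalization of Pollard's theorem~\cite{HaSe2008}, whose proof uses Hamidoune's isoperimetric/atom machinery rather than Dyson transforms). So there is no ``paper's own proof'' to compare against; I will instead assess your sketch on its merits.

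The first step, $\min(r,t)=\sum_{i=1}^t\mathds{1}[r\ge i]$ and hence $\sum_{x}\min(r_{U,V}(x),t)=\sum_{i=1}^t|W_i|$, is correct and is exactly the standard Pollard decomposition. After that, however, there are two real gaps.

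First, the level-set bound as you state it is wrong in general. You propose to deduce $|W_i|\ge |U|+|V|-(i-1)-|H_i|$ from Kneser applied to $(U_i,V_i)$ and then to ``absorb'' $|H_i|$ into $\alpha(U,V)$ by producing a witness $V'$ with $|V'|\ge |H_i|$. But $|H_i|$ need not be bounded by $\alpha(U,V)$. A tiny example: $G=\Z/3\Z$, $U=V=\{0,1\}$. Then $U+V=G$, so the stabilizer is $H=G$ with $|H|=3$, while $\alpha(U,V)=1$ (any $V'$ containing a nonzero element generates all of $G$, so only singletons $\{0\}$ or single nonzero elements with $|\langle V'\rangle|=3\le 3$ qualify; size $2$ subsets fail). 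The inequality~\eqref{eq:3_2} still holds here ($3\ge 1\cdot(2+2-1-1)=2$), but not via ``$|H_i|\le\alpha$''; the slack comes from $|U_i+H_i|$ and $|V_i+H_i|$ being much larger than $|U_i|,|V_i|$. Your argument discards exactly that compensating term, so the case $|H_i|>\alpha$ is simply unhandled. This is not a cosmetic issue: it is precisely the case where the $\alpha$ correction in the statement is doing work.

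Second, the construction of $(U_i,V_i)$ with $U_i+V_i\subseteq W_i$ and $|U_i|+|V_i|\ge|U|+|V|-(i-1)$ is asserted but never specified. ``Pick an element contributing only to the thin shell $W_j\setminus W_{j+1}$ and redistribute it'' does not describe a well-defined operation, and the Dyson $e$-transform (which preserves $|A|+|B|$ and shrinks $A+B$) does not, by itself, push sumsets into higher representation levels. Some Pollard-type proofs do build level pairs by deleting a carefully chosen element of $V$ and applying a transform, but the bookkeeping that makes the sumset land inside $W_i$ is nontrivial and has to be exhibited. Likewise the final arithmetic (``$\binom{t}{2}\le t^2/2$ combines with discarded slack to give the clean $-t(t+\alpha)$'') is a hope, not a computation: you need the losses from $(i-1)$ terms, the stabilizer terms, and the transform slack to add up to at most $t(t+\alpha)$, and as written they do not visibly do so. Until the $|H_i|>\alpha$ case is handled and the level-pair construction is pinned down, the argument is incomplete.
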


This can be used to get the following supersaturation result for sets of distinct sizes.
Recall that $\beta(t)$ denotes the cardinality of the largest subgroup of $G$ of order at most $t$.

\begin{cor}\label{cor:supersat}
Let $G$ be an abelian group, $A_1,A_2,B\subset G$ be finite and non-empty subsets of $G$ and $0<\epsilon<1/2$, and denote $\beta = \beta((1+4\eps)|B|)$.
If \[|A_1|+|A_2| \geq (1+2\epsilon)(|B|+\beta),\] then there are at least $\epsilon^2|A_1||A_2|$ pairs $(a_1,a_2)\in A_1\times A_2$ such that $a_1+a_2\not\in B$.
\end{cor}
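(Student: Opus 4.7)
Without loss of generality assume $|A_2|\le|A_1|$, and write $\alpha=\alpha(A_1,A_2)$. The natural approach is to apply Proposition~\ref{prop:thm3_2} with $U=A_1$, $V=A_2$, and a threshold $t\in[1,|A_2|]$ to be chosen at the end. It yields
\[
\sum_{x\in A_1+A_2}\min\bigl(r_{A_1,A_2}(x),t\bigr)\;\ge\;t\bigl(|A_1|+|A_2|-t-\alpha\bigr).
\]
Since $\min(r,t)\le t$, at most $t|B|$ of this sum can come from the $x\in B$, so subtracting gives
\[
\#\bigl\{(a_1,a_2)\in A_1\times A_2:a_1+a_2\notin B\bigr\}\;\ge\;t\bigl(|A_1|+|A_2|-t-\alpha-|B|\bigr).
\]

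Next I would pick $t=\lceil\eps|A_2|\rceil$ and check that the target lower bound $\eps^2|A_1||A_2|$ follows as soon as
\[
(1-\eps)(|A_1|+|A_2|)\;\ge\;\alpha+|B|.
\]
Because $(1-\eps)(1+2\eps)\ge 1$ whenever $\eps\le 1/2$, the hypothesis $|A_1|+|A_2|\ge(1+2\eps)(|B|+\beta)$ reduces the entire task to verifying the single inequality
\[
\alpha(A_1,A_2)\;\le\;\beta\bigl((1+4\eps)|B|\bigr).
\]

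To establish this, I would let $V'$ attain the defining maximum of $\alpha$ and set $H=\langle V'\rangle$. The defining constraints then read $|V'|\le|H|\le|A_1|+|A_2|-|V'|$. If $|H|\le(1+4\eps)|B|$, then $\alpha=|V'|\le|H|\le\beta$ by the definition of $\beta$, and we are done. Otherwise $|H|>(1+4\eps)|B|$, in which case the same constraint forces $\alpha<|A_1|+|A_2|-(1+4\eps)|B|$, so the quantity $L=|A_1|+|A_2|-\alpha-|B|$ now exceeds $4\eps|B|$. I would then re-enter the Pollard-type bound with a smaller threshold $t'\approx 2\eps|B|$, which produces a lower bound of the form $t'(L-t')\ge 4\eps^2|B|^2$, and use the hypothesis a second time---together with AM--GM applied to $|A_1||A_2|\le((|A_1|+|A_2|)/2)^2$---to show that in this sub-case $|A_1||A_2|\le 4|B|^2$, which closes the estimate.

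The main obstacle I foresee is precisely this second sub-case: when the defining subgroup $H$ genuinely exceeds $(1+4\eps)|B|$, one cannot bound $\alpha$ by $\beta$ directly and must instead exploit the gap that $H$ creates between $L$ and $4\eps|B|$. The crucial quantitative feature that makes this workable is the $2\eps$-versus-$4\eps$ slack in the statement (the hypothesis involves $(1+2\eps)(|B|+\beta)$ while $\beta$ is defined with $(1+4\eps)|B|$); this mismatch provides exactly enough room to convert the weak lower bound $L>4\eps|B|$ into the needed bound of order $\eps\sqrt{|A_1||A_2|}$.
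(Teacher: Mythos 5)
Your overall strategy tracks the paper's: apply Proposition~\ref{prop:thm3_2} with threshold $t=\epsilon\cdot(\text{smaller set size})$, reduce the target to $(1-\epsilon)(|A_1|+|A_2|)\geq \alpha+|B|$, and bound $\alpha$ by examining the subgroup $H=\langle V'\rangle$. The subcase $|H|\leq(1+4\epsilon)|B|$ (hence $\alpha\leq\beta$) is handled correctly. But in the subcase $|H|>(1+4\epsilon)|B|$, the step ``use the hypothesis a second time \dots to show $|A_1||A_2|\leq 4|B|^2$'' is a genuine gap. AM--GM gives $|A_1||A_2|\leq\bigl((|A_1|+|A_2|)/2\bigr)^2$, and to conclude $|A_1||A_2|\leq 4|B|^2$ you would need $|A_1|+|A_2|\leq 4|B|$; but the hypothesis $|A_1|+|A_2|\geq(1+2\epsilon)(|B|+\beta)$ is a \emph{lower} bound on $|A_1|+|A_2|$, and the constraint $|H|\leq|A_1|+|A_2|-|V'|$ also only bounds $|A_1|+|A_2|$ from below. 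So the claim is unjustified, and in fact false whenever $|B|$ is small relative to $|A_1|+|A_2|$. (The same obstacle would already appear in your first attempt: the $\alpha$-bound in this subcase gives $L>4\epsilon|B|$, and needing $L\geq\epsilon(|A_1|+|A_2|)$ again forces $|A_1|+|A_2|\leq 4|B|$. The re-entry with $t'\approx 2\epsilon|B|$ does not circumvent this; it just relocates the same requirement, while also introducing a new unchecked constraint $t'\leq|A_2|$.)

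The missing ingredient, which is how the paper resolves this, is a preliminary case split. If $|B|\leq(1-\epsilon^2)\max(|A_1|,|A_2|)$, the conclusion is immediate without any Pollard-type input: each $b\in B$ has $r_{A_1,A_2}(b)\leq\min(|A_1|,|A_2|)$ representations, so at most $|B|\min|A_i|\leq(1-\epsilon^2)|A_1||A_2|$ pairs have sum in $B$, leaving at least $\epsilon^2|A_1||A_2|$ good pairs. After dispatching this case, the remaining case gives (with your normalization $|A_1|\geq|A_2|$) $|A_1|<(1-\epsilon^2)^{-1}|B|<\tfrac43|B|$, hence $|A_1|+|A_2|<\tfrac83|B|<4|B|$, which is precisely the upper bound you need to close the sub-case. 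With that case split inserted up front, your computation goes through (and the re-entry with $t'$ becomes unnecessary, since $L>4\epsilon|B|>\epsilon(|A_1|+|A_2|)$ directly). As written, however, the proposal is incomplete.
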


\begin{proof}
Without loss of generality we can assume $|A_2|\geq |A_1|$.
If $|B|\leq (1-\epsilon^2)|A_2|$, then since $r_{A_1,A_2}(b)\leq |A_1|$ for every fixed $b\in B$, we have at least \[|A_1||A_2| - |B||A_1| \geq \epsilon^2 |A_1||A_2|\] pairs $(a_1,a_2)\in A_1\times A_2$ such that $a_1+a_2\not\in B$.
So we can assume $|B| > (1-\epsilon^2)|A_2|$, which also implies $|A_1|\geq \epsilon |A_2|$.
Now applying Proposition~\ref{prop:thm3_2} with $t=\epsilon |A_2|$, $U=A_2$ and $V=A_1$ gives us \[\sum_{x\in A_1+A_2}\min(r_{A_1,A_2}(x),\epsilon|A_2|) \geq \epsilon|A_2|(|A_1|+(1-\epsilon)|A_2|-\alpha)\]
and hence
\begin{equation}\label{eq:supersat_lb}
\sum_{x\in (A_1+A_2)\setminus B}\min(r_{A_1,A_2}(x),\epsilon|A_2|) \geq \epsilon|A_2|(|A_1|+(1-\epsilon)|A_2|-|B|-\alpha).
\end{equation}
We will show that \[\alpha \leq \max(\beta,\, |A_1|+|A_2|-(1+4\epsilon)|B|).\]
Indeed, suppose $A'\subset G$ satisfies $|A'|\leq |A_1|$ and $|\langle A'\rangle| \leq |A_1|+|A_2|-|A'|$.
If $|A'|\leq \beta$ we are done, so suppose $|A'|>\beta$, and hence $|\langle A'\rangle| \geq (1+4\epsilon)|B|$ by definition of $\beta$.
So $A'$ satisfies \[|A'| \leq |A_1|+|A_2|-|\langle A' \rangle| \leq |A_1|+|A_2|-(1+4\epsilon)|B|,\] which is what we wanted to show.
Now note that since $\epsilon < 1/2$ and $|B|\geq (1-\epsilon^2)|A_2|$ we have $4|B|> 3|A_2|$ and hence
\begin{equation*}
|A_1|+(1-\epsilon)|A_2|-|B|-(|A_1|+|A_2|-(1-4\epsilon)|B|) = 4\epsilon|B|-\epsilon|A_2| > 2\epsilon|A_2|> \epsilon|A_1|.
\end{equation*}
Similarly, since $|A_1|+|A_2|\geq (1+2\epsilon)(|B|+\beta)$ and $\epsilon<1/2$ we have 
\begin{align*}
|A_1|+(1-\epsilon)|A_2|-|B|-\beta &\geq |A_1|+(1-\epsilon)|A_2|-\frac{|A_1|+|A_2|}{1+2\epsilon}\\
&> |A_1|+(1-\epsilon)|A_2|-(1-\epsilon)(|A_1|+|A_2|)\\ 
&= \epsilon|A_1|.
\end{align*}
Hence~\eqref{eq:supersat_lb} implies \[\sum_{x\in (A_1+A_2)\setminus B} r_{A_1,A_2}(x) \geq \epsilon^2|A_1||A_2|.\]
\end{proof}

Next we are going to prove a statement giving us structural information on sets with (very) small sumsets that will later be applied to the containers obtained from Theorem~\ref{thm:containerfamily}.
As stated in the introduction, this comes in the form of Freiman's $3k-4$ theorem~\cite{F1973} and more specifically its asymmetric version due to Lev and Smeliansky~\cite{LS1995}.
In the context of the container approach, what is actually needed are so-called \emph{robust} versions of these results, that is, we want to obtain structural knowledge on sets $U,V$ in a group $G$ knowing only that $\{u+v : (u,v)\in\Gamma\}$ is small for some subset $\Gamma\subset U\times V$ that is large but not necessarily the full product.

Prior results of such type in the literature were mainly concerned with handling the case of sets having the same cardinality.
Note that this would even be an issue if we stipulated that $s_1=s_2$ in Theorem~\ref{thm:easymain_counting}.
The problem here is that while the pairs of sets that are counted may have the same size, the containers obtained via Theorem~\ref{thm:containerfamily} might differ slightly.
We modify the recently obtained robust version of Freiman's $3k-4$ theorem by Shao and Xu~\cite{SX2019}, which itself built on earlier work by Lev~\cite{L2000} to handle this, and obtain the following stability result.

For subsets $U,V$ of some abelian group $G$ and $\Gamma\subset U\times V$, denote by \[U\stackrel{\Gamma}{+} V=\{u+v \in G: (u,v)\in \Gamma\}\] the restricted sumset of $U$ and $V$.

\begin{thm}\label{prop:freiman3k4robust} 
Let $0<\epsilon <1/2$ and let $U,V$ be finite subsets of $\Z$ with $N=\min \{|U|, |V|\}\ge 3$ and $M=\max\{|U|,|V|\}\geq 2/\sqrt{\epsilon}$.
Let $\Gamma \subset U\times V$ with $|\Gamma|\ge (1-\epsilon)|U||V|$ and \[|U\stackrel{\Gamma}{+} V|= |U|+|V|+r.\]
If 
\begin{equation}\label{eq:3k4_rUB}
r<\frac{N}{2}-13\sqrt{\epsilon}M,
\end{equation}
then there are arithmetic progressions $P$ and $Q$ with the same common difference and lengths $|P|\le |U|+r+5\sqrt{\epsilon}M, |Q|\le |V|+r+5\sqrt{\epsilon}M$  such that $|P\cap U|\ge  (1-\sqrt{\epsilon})|U|$ and $|Q\cap V|\ge  (1-\sqrt{\epsilon})|V|$. 
\end{thm}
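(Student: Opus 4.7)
The plan is to reduce the robust statement to the classical (non-robust) asymmetric Lev--Smeliansky theorem via a cleaning step, using the robust Kneser theorem (Theorem~\ref{prop:robustkneser}) to bridge between the restricted and unrestricted sumsets. I would start with a Markov-type cleaning: let
\[
  U^*=\{u\in U : |\{v\in V : (u,v)\in\Gamma\}|\geq(1-\sqrt{\epsilon})|V|\}
\]
and symmetrically $V^*$, so that $|U\setminus U^*|\leq\sqrt{\epsilon}|U|$ and $|V\setminus V^*|\leq\sqrt{\epsilon}|V|$. The restricted sumset of $U^*$ and $V^*$ along $\Gamma\cap(U^*\times V^*)$ is contained in $U\stackrel{\Gamma}{+}V$, so its size is at most $|U^*|+|V^*|+r+O(\sqrt{\epsilon}M)$.

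The decisive step is to pass from the restricted to the unrestricted sumset, proving
\[
  |U^*+V^*|\leq|U^*|+|V^*|+r+C\sqrt{\epsilon}M
\]
for some absolute constant $C$. The intuition is that because $\Gamma\cap(U^*\times V^*)$ has density $1-O(\sqrt{\epsilon})$, any $w=u+v\in U^*+V^*$ with $(u,v)\notin\Gamma$ typically admits many alternative representations $w=u'+v'$ with $(u',v')\in\Gamma$, which places $w$ in the restricted sumset; the exceptional sums are controlled via the robust Kneser theorem applied to the potential stabilizer of $U+V$. In $\Z$ this stabilizer is a subgroup $d\Z$, and after dilating by $d^{-1}$ one may assume $d=1$, so no further subgroup structure obstructs the analysis and the common difference of the APs to be produced is exactly $d$.

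With the bound on $|U^*+V^*|$ in hand, the hypothesis $r<N/2-13\sqrt{\epsilon}M$ ensures that the residual excess $r^*=r+O(\sqrt{\epsilon}M)$ satisfies $r^*<\min(|U^*|,|V^*|)-3$, so the classical asymmetric Lev--Smeliansky theorem~\cite{LS1995} applies to $U^*$ and $V^*$. This yields arithmetic progressions $P^*\supset U^*$, $Q^*\supset V^*$ with a common difference and $|P^*|\leq|U^*|+r^*$, $|Q^*|\leq|V^*|+r^*$. Taking $P,Q$ to be slight extensions of $P^*, Q^*$ (of the same common difference) delivers the required $|U\cap P|\geq|U^*|\geq(1-\sqrt{\epsilon})|U|$, analogously for $V$, and the length bounds $|P|\leq |U|+r+5\sqrt{\epsilon}M$, $|Q|\leq |V|+r+5\sqrt{\epsilon}M$.

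The main obstacle is the transfer from the restricted to the unrestricted sumset. A generic Balog--Szemer\'edi--Gowers argument only produces constants bounded away from one, giving a sumset bound proportional to $|U|+|V|$, which is much too weak for the classical Lev--Smeliansky theorem to apply afterwards. Exploiting the very high density $1-\epsilon$ of $\Gamma$ together with the robust Kneser theorem in order to pay an \emph{additive} loss of only $O(\sqrt{\epsilon}M)$ is the heart of the argument and will likely require a careful, possibly iterated, double-counting scheme.
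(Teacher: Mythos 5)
Your high-level plan --- clean to $U^*,V^*$, transfer from the restricted to the unrestricted sumset, and then cite the classical Lev--Smeliansky theorem --- is not the route the paper takes, and the decisive transfer step, which you yourself flag as the ``heart of the argument,'' is not something the robust Kneser theorem delivers. Theorem~\ref{prop:robustkneser} only gives a \emph{lower} bound $|U\stackrel{\Gamma}{+}V|\ge |V|+|U|/2-K-2s$ on the restricted sumset when $U\stackrel{\Gamma}{+}V\neq U+V$; its contrapositive tells you the two sumsets coincide only when the restricted sumset has size strictly less than roughly $M+N/2$. After cleaning and adding popular colours you only know $|U^*\stackrel{\Gamma''}{+}V^*|\le |U|+|V|+r+O(\sqrt{\epsilon}N)$, which is larger than $M+N/2$ whenever $r\gtrsim -N/2$. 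So under the stated hypothesis $r<N/2-13\sqrt{\epsilon}M$, robust Kneser applied as a black box gives no useful information, and your proposed bound $|U^*+V^*|\le |U^*|+|V^*|+r+C\sqrt{\epsilon}M$ is left unproved. In fact a genuine proof of that inequality would essentially already require the structural conclusion you are trying to derive, so you would be running in a circle.

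Your remark about the ``stabilizer of $U+V$'' in $\Z$ being some $d\Z$ that one may dilate away is also off the mark: in $\Z$ there are no nontrivial finite stabilizers. The way Kneser (robust or classical) enters the actual argument is via a projection $\Z\to\Z/\ell\Z$ with $\ell=\max U'$ after normalizing $\min U'=\min V'=0$ --- this is where nontrivial stabilizers appear and are dealt with. That projection is the engine of Proposition~\ref{prop:almost1}, which the paper uses to directly lower bound the \emph{restricted} sumset in terms of the diameter $\ell(U')$. One compares this with the given upper bound $|U\stackrel{\Gamma}{+}V|=|U|+|V|+r$ to conclude $\ell(U')$ is small, i.e.\ $U'$ sits in a short interval; the step of bounding the unrestricted sumset never occurs. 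The paper also requires an additional case split according to whether the cleaned set with larger diameter has more or fewer ``holes'' (one case truncating the longer set), which has no analogue in your sketch. In short, your cleaning step and your recognition that $O(\sqrt{\epsilon}M)$ additive losses are the target are both correct, but the central step is left as a gap, and the paper's Proposition~\ref{prop:almost1}-based route through $\Z/\ell\Z$, rather than any restricted-to-unrestricted transfer, is what makes the argument go through.
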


Let us start by discussing some aspects of this result.
The best upper bound for $r$ in~\eqref{eq:3k4_rUB} that one might possibly obtain is of the form $N$, the cardinality of the smaller set.
This is true even in the case of $\Gamma=U\times V$ where at least in the integers this is actually achieved by the results due to Lev and Smeliansky~\cite{LS1995} as well as Stanchescu~\cite{Stanchescu1996}.
Similarly in the special case of $N=M$, that is, both sets are of the same cardinality, Shao in~\cite{S2019} was recently able to establish a similar robust result.
He achieved this by first proving a new version of the \emph{Balog-Szemer\'edi-Gowers Theorem}~\cite{BaSz1994,Gowers1998} and then directly applying the classical results of Lev--Smeliansky and Stanchescu.

We will soon discuss different proof approaches (including ours) to statements like Theorem~\ref{prop:freiman3k4robust}.
Before that, we first apply it to get something needed for our particular application.

\begin{cor}\label{cor:relativestability}

Let $s_1\leq s_2$ be positive integers, and $0<\epsilon\leq 2^{-8}\left(\frac{s_1}{s_1+s_2}\right)^2$.
If $A_1,A_2,B\subset\Z$, such that $(1-\epsilon)|B|\leq |A_1| + |A_2|$ and $|A_i|\leq \left(\frac{s_i}{s_1+s_2}+2\sqrt{\epsilon}\right)|B|$ for $i=1,2$, then one of the following holds:
\begin{enumerate}
\item\label{item:relativestab_manyedges} There are at least $\epsilon^2|A_1||A_2|$ pairs $(a_1,a_2)\in A_1\times A_2$ such that $a_1+a_2\not\in B$.
\item\label{item:relativestab_closetoap} There are arithmetic progressions $P_1, P_2$ of length $|P_i| \leq \frac{s_i}{s_1+s_2}|B| + 4\sqrt{\epsilon}|B|$ with the same common difference such that $P_i$ contains all but at most $\epsilon|A_i|$ points of $A_i$.
\end{enumerate}
\end{cor}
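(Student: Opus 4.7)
The plan is to establish a dichotomy: if conclusion~\ref{item:relativestab_manyedges} fails, then the robust asymmetric Freiman theorem applies to $A_1, A_2$ and delivers conclusion~\ref{item:relativestab_closetoap}. I would suppose there are fewer than $\epsilon^2|A_1||A_2|$ pairs $(a_1,a_2)\in A_1\times A_2$ with $a_1+a_2\notin B$, and set $\Gamma=\{(a_1,a_2)\in A_1\times A_2:a_1+a_2\in B\}$, so that $|\Gamma|\geq(1-\epsilon^2)|A_1||A_2|$ and $A_1\stackrel{\Gamma}{+}A_2\subseteq B$, hence $|A_1\stackrel{\Gamma}{+}A_2|\leq|B|$. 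Theorem~\ref{prop:freiman3k4robust} would then be applied to $U=A_1$, $V=A_2$ with tolerance parameter set to $\epsilon^2$.

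The first block of work is to verify the size hypotheses. Combining $(1-\epsilon)|B|\leq|A_1|+|A_2|$ with the upper bound $|A_i|\leq(\tfrac{s_i}{s_1+s_2}+2\sqrt{\epsilon})|B|$ yields $|A_i|\geq(\tfrac{s_i}{s_1+s_2}-\epsilon-2\sqrt{\epsilon})|B|$, so each $|A_i|$ sits in a narrow window around its natural proportion of $|B|$. Writing $|A_1\stackrel{\Gamma}{+}A_2|=|A_1|+|A_2|+r$, the inclusion in $B$ and the lower bound on $|A_1|+|A_2|$ give $r\leq\epsilon|B|$. The critical hypothesis $r<N/2-13\epsilon M$, with $N=\min\{|A_1|,|A_2|\}$ and $M=\max\{|A_1|,|A_2|\}$, reduces (using $M\leq(1+2\sqrt{\epsilon})|B|$ and $N\geq(\tfrac{s_1}{s_1+s_2}-O(\sqrt{\epsilon}))|B|$) to a requirement of the form $C\epsilon<\tfrac{s_1}{2(s_1+s_2)}$ for an absolute constant $C$, which is comfortable under the standing assumption $\epsilon\leq 2^{-8}(s_1/(s_1+s_2))^2\leq 2^{-8}(s_1/(s_1+s_2))$. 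The conditions $N\geq 3$ and $M\geq 2/\epsilon$ hold as soon as $|B|$ is large enough, which is the regime relevant to the applications from Theorem~\ref{thm:containerfamily}.

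Theorem~\ref{prop:freiman3k4robust} then supplies arithmetic progressions $P_1, P_2$ sharing a common difference with $|P_i|\leq|A_i|+r+5\epsilon M$ and $|A_i\cap P_i|\geq(1-\epsilon)|A_i|$, equivalently $|A_i\setminus P_i|\leq\epsilon|A_i|$, which is exactly the covering statement in~\ref{item:relativestab_closetoap}. Substituting $|A_i|\leq(\tfrac{s_i}{s_1+s_2}+2\sqrt{\epsilon})|B|$, $r\leq\epsilon|B|$, and $M\leq(1+2\sqrt{\epsilon})|B|$ gives $|P_i|\leq \tfrac{s_i}{s_1+s_2}|B|+(2\sqrt{\epsilon}+O(\epsilon))|B|$, and for $\epsilon\leq 2^{-8}$ the $O(\epsilon)$ correction is absorbed into the $4\sqrt{\epsilon}|B|$ slack.

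The main obstacle is purely parameter bookkeeping: Theorem~\ref{prop:freiman3k4robust} is sharp in the sense that its hypothesis $r<N/2-13\sqrt{\epsilon'}M$ forces $\epsilon'$ to be small relative to $s_1/(s_1+s_2)$, while its conclusion inflates the AP length by $5\sqrt{\epsilon'}M$. The numerical assumption $\epsilon\leq 2^{-8}(s_1/(s_1+s_2))^2$ is engineered precisely so that choosing $\epsilon'=\epsilon^2$ in the theorem fits both the hypothesis (the square controls the relative scale) and the desired $4\sqrt{\epsilon}|B|$ slack in the conclusion. Once this bookkeeping is in place, the corollary is a clean reduction to Theorem~\ref{prop:freiman3k4robust}.
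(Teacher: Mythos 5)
Your proposal is correct and mirrors the paper's own argument: assume case~\ref{item:relativestab_manyedges} fails, set $\Gamma=\{(a_1,a_2):a_1+a_2\in B\}$, apply Theorem~\ref{prop:freiman3k4robust} with tolerance $\epsilon^2$ (so $\sqrt{\epsilon'}=\epsilon$), verify $r<N/2-13\epsilon M$ from the two-sided control $\left(\tfrac{s_i}{s_1+s_2}-\epsilon-2\sqrt{\epsilon}\right)|B|\leq|A_i|\leq\left(\tfrac{s_i}{s_1+s_2}+2\sqrt{\epsilon}\right)|B|$, and then read off the AP lengths. One small imprecision: the hypothesis check reduces not to $C\epsilon<\tfrac{s_1}{2(s_1+s_2)}$ as your middle paragraph states, but to an inequality dominated by a $\sqrt{\epsilon}$ term (roughly $\sqrt{\epsilon}+O(\epsilon)<\tfrac{s_1}{4(s_1+s_2)}$), which is exactly why the standing assumption squares $s_1/(s_1+s_2)$ — you correctly diagnose this in your closing paragraph, so the argument goes through as in the paper.
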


\begin{proof}
Let $\Gamma = \{(a_1,a_2)\in A_1\times A_2 : a_1+a_2\in B\}$.
If $|\Gamma| < (1-\epsilon^2)|A_1||A_2|$ case~\ref{item:relativestab_manyedges} holds, so assume the converse.
For $0<\epsilon\leq 2^{-8}\left(\frac{s_i}{s_1+s_2}\right)^2$, it holds that \[\left(\frac{1}{2}+13\epsilon\right)\frac{s_i}{s_1+s_2} > \frac{s_i}{2(s_1+s_2)}\geq 8\sqrt{\epsilon} \geq 5\sqrt{\epsilon}+\frac{31}{2}\epsilon-26\epsilon^{3/2}+13\epsilon^2,\] which implies \[\frac{3}{2}\left(\frac{s_i}{s_1+s_2}-2\sqrt{\epsilon}-\epsilon\right)+(1-13\epsilon)\left(1-\frac{s_i}{s_1+s_2}-2\sqrt{\epsilon}-\epsilon\right)>1.\]
Since \[|A_i| \geq (1-\epsilon)|B|-\left(\frac{s_{3-i}}{s_1+s_2}+2\sqrt{\epsilon}\right)|B| = \left(\frac{s_i}{s_1+s_2}-\epsilon-2\sqrt{\epsilon}\right)|B|,\] it thus follows that
\begin{align*}
|A_1 \stackrel{\Gamma}{+}A_2| &\leq |B|\\
&\leq \frac{3}{2}\left(\frac{s_i}{s_1+s_2}-2\sqrt{\epsilon}-\epsilon\right)|B|+(1-13\epsilon)\left(1-\frac{s_i}{s_1+s_2}-2\sqrt{\epsilon}-\epsilon\right)|B|\\
&\leq \frac{3}{2}|A_i|+(1-13\epsilon)|A_{3-i}|.
\end{align*}
We can thus apply Theorem~\ref{prop:freiman3k4robust} with $\epsilon^2$ in place of $\epsilon$.
Note that since $s_2\geq s_1$, both $|A_1|$ and $|A_2|$ are upper bounded by $(\frac{s_2}{s_1+s_2}+2\sqrt{\epsilon})|B|$, and so the theorem implies that there exist arithmetic progressions $P_1$, $P_2$ with the same common difference of length 
\begin{align*}
|P_i| &\leq |A_1\overset{\Gamma}{+}A_2|-|A_{3-i}|+ 5\epsilon\left(\frac{s_2}{s_1+s_2}+2\sqrt{\epsilon}\right)|B|\\
&\leq |B|-\left(\frac{s_i}{s_1+s_2}-\epsilon-2\sqrt{\epsilon}\right)|B|+5\epsilon\left(\frac{s_2}{s_1+s_2}+2\sqrt{\epsilon}\right)|B|\\
&\leq \frac{s_i}{s_1+s_2}|B| + 4\sqrt{\epsilon}|B|
\end{align*}
such that \[|A_i\setminus P_i| = |A_i| - |A_i\cap P_i| \leq \epsilon|A_i|,\] so case~\ref{item:relativestab_closetoap} holds.
\end{proof}

\subsection*{Proof of Theorem~\ref{prop:freiman3k4robust}}

Before going into specifics, let us discuss the general idea behind our proof strategy of Theorem~\ref{prop:freiman3k4robust}, which follows the outline already used by Lev~\cite{L2000,L2000b} as well as Shao--Xu~\cite{SX2019}.
The first main insight is that as long as the set $\Gamma\subset U\times V$ satisfies some specific regularity conditions, one can map $U$ and $V$ to an appropriate cyclic group and lower bound the cardinality of $U\stackrel{\Gamma}{+}V$ not only by its projection.
Instead, one also gets an additional translate of the ``shorter'' set among $U$ and $V$.
One then obtains the desired results by investigating the projection more closely.
When this projection fills out all of the cyclic group, one uses Kneser's theorem in abelian groups~\cite{K1953}, a classical result in additive number theory.
In case the projection is not the full group one instead needs a robust version of Kneser's theorem.
Such a result was proved by Lev in~\cite{L2000} when $U=V$ (but even in nonabelian groups) and more recently by Shao and Xu~\cite{SX2019} when $U$ and $V$ are distinct but of the same cardinality.
As stated before, we follow this general outline and therefore prove a similar result for sets of distinct cardinalities, Theorem~\ref{prop:robustkneser}.
This theorem is of independent interest since in addition to being a key aspect in the proof of Theorem~\ref{prop:freiman3k4robust} it also has several other implications as already investigated in~\cite{SX2019}.

As mentioned before, this is not the only approach to establishing a statement like Theorem~\ref{prop:freiman3k4robust}.
In particular, Shao in~\cite{S2019} provided a framework that allows one to transfer an arbitrary structural result for sumsets of two sets of the same cardinality to a robust version with essentially the same quantitative bounds.
In fact, his statements also hold in the more general setting when both sets only differ by a fixed multiplicative constant, or alternatively for pairs of sets of arbitrary cardinalities when imposing stronger upper bounds on the size of the restricted sumset.
Another approach that lies in between these two strategies might be to only replace the use of the robust version of Kneser's theorem in Lev's approach.
As will become apparent in the sequel, this is currently the bottleneck for improving the quantitative aspects of~\eqref{eq:3k4_rUB} in Theorem~\ref{prop:freiman3k4robust}.

We will now go into the technical details of the proof, starting with defining the exact concept of regularity that we require.
We may think of $\Gamma\subset U\times V$ as a subgraph of the complete bipartite graph $K_{|U|,|V|}$ where the edges $(u,v)$ are colored by the element $c=u+v\in G$. The language of graphs will be handy, and in this way $d_\Gamma(x)$ and $N_\Gamma(x)$ will denote the degree (resp. neighborhood) of some vertex $x$.
Following Lev~\cite{L2000}, we introduce the following definition.

\begin{defn}\label{defn:regularrestrictedsumset}
Let $U,V$ be two finite sets in an abelian group and $K, s$ non-negative integers.
A subset $\Gamma \subset U\times V$ is \emph{$(K,s)$-regular} if the following two things are true:
\begin{enumerate}[label=(\roman*)]
\item $d_{\Gamma}(u)\ge |V|-s$ for each $u\in U$ and $d_{\Gamma}(v)\ge |U|-s$ for each $v\in V$.
\item For any $c\in U+V$ with $r_{U,V}(c)\geq K$, it holds that $c\in U\stackrel{\Gamma}{+}V$.
\end{enumerate}  
\end{defn}

We will now prove the previously mentioned robust version of Kneser's theorem which is effective even for sets of very different sizes.
This can be compared to Proposition~3.1 in~\cite{SX2019} as well as Theorem~2 in~\cite{L2000} in the abelian case, which have slightly better constants when the sets are close in size.

\begin{thm}\label{prop:robustkneser}
Let $U,V$ be finite sets in an abelian group $G$ with $|U|\le |V|$ and let $K,s$ be non-negative integers. 
If $\Gamma\subset U\times V$ is $(K,s)$-regular and $U\stackrel{\Gamma}{+} V\neq U+V,$ then \[|U\stackrel{\Gamma}{+} V|\ge |V|+\frac{|U|}{2} -K-2s.\]
\end{thm}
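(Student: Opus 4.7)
My plan is to follow the strategy pioneered by Lev~\cite{L2000} for the symmetric case $U=V$, later refined by Shao--Xu~\cite{SX2019} for $|U|=|V|$, and adapt it to the asymmetric setting $|U|\le |V|$. The first step is a combinatorial reduction: by $(K,s)$-regularity, for every $u \in U$ the neighbourhood $V_u := N_\Gamma(u) \subseteq V$ has $|V_u| \ge |V| - s$ and $u + V_u \subseteq S := U \stackrel{\Gamma}{+} V$, so for any two elements $u_0, u_1 \in U$,
\[
|S| \;\ge\; |(u_0 + V_{u_0}) \cup (u_1 + V_{u_1})| \;\ge\; 2(|V|-s) - |V \cap (V + u_1 - u_0)|.
\]
Thus the theorem reduces to exhibiting a pair $u_0, u_1 \in U$ with $|V \cap (V + u_1 - u_0)| \le |V| - |U|/2 + K$, or equivalently $|V \setminus (V + (u_1-u_0))| \ge |U|/2 - K$.

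To produce such a pair, I would invoke the hypothesis $S \ne U+V$ to fix a witness $c \in (U+V) \setminus S$. Condition~(ii) of $(K,s)$-regularity then forces $r_{U,V}(c) < K$, so the set $U_c := \{u \in U : c - u \in V\}$ has $|U_c| < K$ and its complement $U_c' := U \setminus U_c$ satisfies $|U_c'| > |U| - K$; for every $u \in U_c'$, the point $c - u$ lies outside $V$. This is the asymmetric ``extra'' information that Kneser's theorem alone does not exploit. Setting $H := \mathrm{Stab}(V)$ and letting $\pi : G \to G/H$ be the quotient map, the argument splits into two cases. If $\pi(U_c')$ spans at least two cosets of $H$, one picks $u_0, u_1 \in U_c'$ with $d := u_1 - u_0 \notin H$; applying Kneser's theorem in $G/H$ (where the stabilizer of $\pi(V)$ is trivial) and tracking how $\pi(c)$ fails to lie in $\pi(S)$, one amplifies the mere non-triviality of $d$ into the quantitative bound $|V \setminus (V+d)| \ge |U|/2 - K$. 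In the degenerate case where $\pi(U_c')$ collapses to a single coset, necessarily $|H| > |U| - K$; then $V$ is a union of large $H$-cosets, and a direct counting argument using this coset structure together with the missing element $c$ and the regularity of $\Gamma$ yields the required lower bound on $|S|$ outright.

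The principal obstacle is the witness-pair step. In both the Lev and Shao--Xu settings, the perfect symmetry between $U$ and $V$ is used decisively — notably when projecting to $G/H$ and applying Kneser in a form that treats the two sets interchangeably. In the asymmetric setting the images $\pi(U)$ and $\pi(V)$ can have incomparable cardinalities, and the quantitative gap $|U|/2 - K$, which scales with the \emph{smaller} of the two sets, must be preserved through the projection-and-recovery argument without leaking into a weaker dependence on $|V|$. Organising the case analysis so that the Kneser-type estimates in the quotient group do not degrade this dependence on $|U|$ is the core technical challenge, and is precisely the point at which a genuinely new ingredient, beyond a direct transplant of the symmetric arguments, is required.
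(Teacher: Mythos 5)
Your opening reduction is sound and in fact coincides with the paper's starting point: from $(K,s)$-regularity, for any $u_0,u_1\in U$ one has
\[
|U\stackrel{\Gamma}{+}V|\;\ge\;2(|V|-s)-|V\cap(V+(u_1-u_0))|,
\]
so the conclusion would follow from a single pair with $|V\setminus(V+(u_1-u_0))|\ge |U|/2-K$; equivalently, assuming the conclusion fails, \emph{every} difference $d\in U-U$ satisfies $r_{V,-V}(d)>|V|-|U|/2+K$. This is exactly inequality~\eqref{eq:kneser_smallsetdists} in the paper's proof by contradiction.

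Where your plan goes wrong is the next step. You propose to directly exhibit a good pair $u_0,u_1$ by taking a witness $c\in(U+V)\setminus(U\stackrel{\Gamma}{+}V)$, forming $U_c'=\{u\in U:c-u\notin V\}$ with $|U_c'|>|U|-K$, projecting modulo $H=\mathrm{Stab}(V)$, and invoking Kneser. But choosing $d=u_1-u_0\notin H$ only yields $V+d\neq V$, i.e.\ $|V\setminus(V+d)|\ge 1$; neither the structure of $U_c'$ nor Kneser's theorem on $\pi(U)+\pi(V)$ gives any control of $|V\cap(V+d)|$ for a \emph{specific} $d$, and in general there need not exist any single $d\in U-U$ with $|V\setminus(V+d)|\ge|U|/2-K$. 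You in fact flag this yourself as ``the principal obstacle'' and leave it open, so as written the proposal has a genuine gap rather than a proof.

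The paper sidesteps this by never trying to find a good pair. It takes a counterexample minimizing $|U|$, assumes $\Gamma$ is saturated, and turns the all-pairs inequality~\eqref{eq:kneser_smallsetdists} into structural information: it introduces the set $P$ of \emph{popular colors} $\sigma$ with $r_{U,V}(\sigma)\ge|U|/2$, shows via a degree count that $|P|>|V|-|U|/2+K$, and shows (using~\eqref{eq:kneser_smallsetdists} together with regularity) that any $v\in V$ whose fiber $U+v$ meets $P$ satisfies $U+v\subset U\stackrel{\Gamma}{+}V$. Restricting to $V'=\{v:(U+v)\cap P=\emptyset\}$ gives a new $(K,s)$-regular instance $\Gamma'\subset U\times V'$ with $U\stackrel{\Gamma'}{+}V'\neq U+V'$ and, crucially, $|V'|<|U|$. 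Minimality of $|U|$ then forces the bound for this smaller instance, and adding back $|P|$ produces a contradiction. So the mechanism is an induction on the size of the smaller set combined with a popular-color decomposition, not a Kneser-type projection argument, and it requires no genuinely new ingredient beyond what is already in Lev's framework. If you want to keep your witness-$c$ idea, note that the paper does not use the witness quantitatively at all; it only uses $U\stackrel{\Gamma}{+}V\neq U+V$ to guarantee the induction has something to bite on. I would recommend abandoning the ``find one good pair'' strategy and following the minimal-counterexample route.
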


It would be interesting to know whether one can improve the $|U|/2$ term in Theorem~\ref{prop:robustkneser} to $|U|$ instead.
Even in their less general settings, both Lev and Shao--Xu were only able to replace the constant $1/2$ by the golden ratio.
As will become apparent in the proofs of Proposition~\ref{prop:almost1} this would result in an immediate quantitative improvement in the upper bound~\eqref{eq:3k4_rUB} in Theorem~\ref{prop:freiman3k4robust}.
It would also confirm a conjecture posed by Lev in~\cite{L2000}.
Note further that the non-triviality of such a statement clearly depends on the choices of $K$ and $s$.
For instance, whenever $K+s\geq|U|/2$, Theorem~\ref{prop:robustkneser} tells us that the restricted sumset is larger than $|V|-s$, which is a trivial lower bound obtained from $(K,s)$-regularity looking at only the neighborhood of a single element $u\in U$ in $\Gamma$.
So the interesting cases are when $K$ and $s$ are both small enough compared to $|U|$.
Conversely, very small $K$ and $s$ will almost certainly result in either the full and restricted sumsets being the same (and hence the statement holding vacuously) or both their cardinalities being closer to the product rather than the sum of the two sets.

\begin{proof}[Proof of Theorem~\ref{prop:robustkneser}]
Suppose the statement is false and take a counterexample that minimizes $|U|$, the cardinality of the smaller set.
Note that we can assume that the graph $\Gamma$ is \emph{saturated}, meaning that if some color $\sigma\in U+V$ is contained in $U\stackrel{\Gamma}{+}V$, then in fact all edges $(u,v)\in U\times V$ with $u+v=\sigma$ are contained in $\Gamma$.
We start by showing that for any $u,u'\in U$, the distance $u-u'$ has many representations in $V-V$.
To do this, note that since $\Gamma$ is $(K,s)$-regular, we have $|(u+V)\setminus (U\stackrel{\Gamma}{+}V)| \leq s$, and similarly if we replace $u$ by $u'$.
So \[|(u+V)\cup (u'+V)| \leq |U\stackrel{\Gamma}{+}V|+2s < |V|+\frac{|U|}{2}-K,\] which implies 
\begin{equation}\label{eq:kneser_smallsetdists}
r_{V,-V}(u-u') = |(u+V)\cap (u'+V)| = 2|V|-|(u+V)\cup (u'+V)| > |V|-\frac{|U|}{2}+K.
\end{equation}
Next, we will show that there are many popular colors in $\Gamma$.
For this, define the set $P$ by \[P=\left\{\sigma\in U\stackrel{\Gamma}{+}V : r_{U,V}(\sigma)\geq |U|/2\right\}.\]
Note that $\Gamma$ is saturated, so the number of representations in $U\stackrel{\Gamma}{+}V$ and $U+V$ is identical for every color that actually appears.

The motivation for studying $P$ will be that in fact, if an element $v\in V$ ``sees'' a single popular color, one can show that it actually has all of $U$ as its neighborhood and hence any missing color (which exists by assumption) is ``surrounded'' only by unpopular colors.
This will result in an induced subgraph of $\Gamma$ that represents a smaller counterexample which runs counter to the minimality assumption in the beginning.
Let us be more precise.

By $(K,s)$-regularity and the assumed upper bound on $|U\stackrel{\Gamma}{+}V|$, we have
\begin{align*}
|U|(|V|-s) &\leq |\Gamma|\\
&= \sum_{\sigma\in P}r_{U,V}(\sigma) + \sum_{\sigma\notin P}r_{U,V}(\sigma)\\
&< |P||U| + \left(|U\stackrel{\Gamma}{+}V|-|P|\right)\frac{|U|}{2}\\
&< |P|\frac{|U|}{2} + \left(|V|+\frac{|U|}{2}-K-2s\right)\frac{|U|}{2},
\end{align*}
which can be rearranged to get
\begin{equation}\label{eq:kneser_manypopdist}
|P| > |V|-\frac{|U|}{2}+K.
\end{equation}
Next, we will show that for every $v\in V$ with $(U+v)\cap P\neq\emptyset$, we in fact have 
\begin{equation}\label{eq:kneser_popcolornborhood}
U+v\subset U\stackrel{\Gamma}{+}V.
\end{equation}
To see this, suppose $u_0+v\in P$. 
By the definition of $P$, there is a set ${\mathcal P}_0\in U\times V$  with    $|{\mathcal P}_0|\ge |U|/2$ such that $u'+v'=u_0+v$ for each $(u',v')\in {\mathcal P}_0$. 
Note that since $u_0+v$ was fixed, the second components of these tuples are all pairwise distinct.
Let $u\in U$ be chosen arbitrarily. 
It follows from~\eqref{eq:kneser_smallsetdists} that there is a set ${\mathcal P}_1\in V\times V$ with $|{\mathcal P}_1|\ge |V|-\frac{|U|}{2}+K$  such that $v''-v'=u-u_0$ for each $(v',v'')\in {\mathcal P}_1$. 
Again, $u$ and $u_0$ are fixed, and hence the first components of these tuples are pairwise distinct as well.
Hence by inclusion-exclusion, there are at least $K$ pairs in ${\mathcal P}_0$ whose second coordinate coincides with the first coordinate of some pair in ${\mathcal P}_1$. 
Each two such pairs $((u',v') ,(v',v''))\in {\mathcal P}_0\times {\mathcal P}_1$ define the relation $(u'+v')+(v''-v')=(u_0+v)+(u-u_0)=v+u$, implying that  
\[r_{U,V}(u+v)\geq K,\] 
and so $u+v\in U\stackrel{\Gamma}{+}V$ by $(K,s)$-regularity. 
Since the choice of $u$ is arbitrary, this proves~\eqref{eq:kneser_popcolornborhood}.

Let $V'\subset V$ be the set of elements $v$ such that $(U+v)\cap P=\emptyset$.
Then \[\Gamma\cap (U\times (V\setminus V')) = U\times (V\setminus V'),\] so since $U\stackrel{\Gamma}{+}V\neq U+V,$ we must have $U\stackrel{\Gamma'}{+}V'\neq U+V'$, where $\Gamma'=\Gamma\cap (U\times V')$ is the induced subgraph of $\Gamma$ on $U\times V'$.
Furthermore, $\Gamma'$ is $(K,s)$-regular: Firstly, it is clear that at most $s$ edges are missing in the neighborhood of every vertex, since this was the case for $\Gamma$.
Secondly, suppose $x\in U+V'$ is an element such that $r_{U,V'}(x)\geq K$.
Then since $V'\subset V$, $r_{U,V}(x)\geq K$, and so $x\in U\stackrel{\Gamma}{+}V$.
Since $\Gamma$ was saturated, \emph{every} edge that represented $x$ was included, and hence $x\in U\stackrel{\Gamma'}{+}V'$.

Next we will show that $|U|>|V'|$.
To see this, first note that we have the trivial lower bound \[|U\stackrel{\Gamma'}{+}V'|\geq |V'|-s,\] by using $(K,s)$-regularity and looking at the neighborhood of a single vertex of $U$ in $\Gamma'$.
On the other hand, every color in $U\stackrel{\Gamma'}{+}V'$ is contained in $(U\stackrel{\Gamma}{+}V)\setminus P$, and by~\eqref{eq:kneser_manypopdist} we thus have \[|U\stackrel{\Gamma'}{+}V'| \leq |U\stackrel{\Gamma}{+}V| - |P| < |V|+\frac{|U|}{2}-2s-K-\left(|V|-\frac{|U|}{2}+K\right) = |U|-2s-2K.\]
Combining these inequalities implies \[|U|>|V'|+s+2K,\] so in particular $|U|>|V'|$.
Since $U$ and $V$ represented a counterexample that minimized the cardinality of the smaller set, we must have 
\begin{equation}\label{eq:kneser_induction}
|U\stackrel{\Gamma'}{+}V'| \geq |U|+\frac{|V'|}{2}-2s-K.
\end{equation}
But then by combining~\eqref{eq:kneser_manypopdist} and~\eqref{eq:kneser_induction}, \[|U\stackrel{\Gamma}{+}V| \geq |P| + |U\stackrel{\Gamma'}{+}V'| > |V|+\frac{|U|}{2}-2s+\frac{|V'|}{2} > |V|+\frac{|U|}{2}-2s-K,\] a contradiction.
\end{proof}

We next prove a technical intermediate result in the group of integers that is comparable to Theorem~1 in~\cite{L2000} due to Lev as well as Proposition~2.2 in~\cite{SX2019} due to Shao and Xu.
The proof follows essentially the same arguments, using Theorem~\ref{prop:robustkneser} instead of other robust versions of Kneser's theorem, as well as being slightly more careful about the distinct set sizes.
The proof consists of two parts.
First, as previously mentioned, one shows that $(K,s)$-regularity will imply that, after projecting the restricted sumset into an appropriate cyclic group, one can lower bound the size of the original sumset by its projection plus one copy of the shorter set.
Appropriate here means that it will be large enough to contain both sets without any new collisions, but small enough to still get an extra copy of the shorter set.
The second part then analyzes the projected restricted sumset more closely.

For a finite  set of integers $U$ we denote its convex hull by $[U]=[\min (U), \max (U)]$. 

\begin{prop}\label{prop:almost1} Let $U,V$ be two finite sets of integers. Assume that $\gcd (U\cup V)=1$ and that $[U]=[0,\ell], [V]=[0,\ell']$, where $\ell'\le \ell$. 
Let $n=\min \{|U|,|V|\}$. 

Let $K\ge 2$, $s\ge 0$ and let $\Gamma\subset U\times V$ be $(K,s)$-regular. 
Then,
\[
|U\stackrel{\Gamma}{+} V|\ge \begin{cases} \ell +|V|-2s, & \ell \le |U|+|V|-2K-2\\ |U|+|V|+\frac{n}{2}-4s-2K-2, & \ell > |U|+|V|-2K-2.\end{cases}
\]
\end{prop}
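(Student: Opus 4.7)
The plan is to adapt the rectification strategy of Lev~\cite{L2000} and Shao--Xu~\cite{SX2019} to sets of distinct cardinalities. We project $U$ and $V$ into a suitably chosen cyclic group $\Z/m\Z$, apply either Kneser's theorem or its robust version Theorem~\ref{prop:robustkneser} to the projected restricted sumset, and translate the information back to $\Z$. The two cases in the conclusion correspond to whether the projected restricted sumset fills the whole cyclic group or not.

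\smallskip

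\noindent\textbf{Step 1 (Projection and counting identity).} Set $m = \ell+1$ and let $\phi\colon \Z \to \Z/m\Z$ be the canonical projection. Since $U,V\subset[0,\ell]$, $\phi$ is injective on each, so $|\bar U|=|U|$, $|\bar V|=|V|$, where bars denote images under $\phi$, and $\bar\Gamma\subset\bar U\times\bar V$ can be identified with $\Gamma$. Partitioning $U\stackrel{\Gamma}{+} V\subset[0,\ell+\ell']$ into $X_1\subset[0,\ell'-1]$, $X_2\subset[\ell',\ell]$ and $X_3\subset[\ell+1,\ell+\ell']$, one observes that $\phi$ is injective on each piece, that $\phi(X_2)$ is disjoint from $\phi(X_1)\cup\phi(X_3)$, and that the only possible collisions are between $\phi(X_1)$ and $\phi(X_3)$. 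This yields the key identity
\[
|U\stackrel{\Gamma}{+}V| \;=\; |\bar U\stackrel{\bar\Gamma}{+}\bar V| \;+\; |\phi(X_1)\cap\phi(X_3)|,
\]
which reduces the problem to lower-bounding each summand.

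\smallskip

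\noindent\textbf{Step 2 (Extra copy of $V$).} The next step is to show that $|\phi(X_1)\cap\phi(X_3)|\ge |V|-2s-1$, i.e.\ that essentially every high sum in $X_3$ has its mod-$m$ twin realised as a low sum in $X_1$. Using $\ell\in U$ and $|N_\Gamma(\ell)|\ge|V|-s$, the sums $\ell+v$ for $v\in N_\Gamma(\ell)\setminus\{0\}$ contribute at least $|V|-s-1$ elements to $X_3$, whose $\phi$-images $c = v-1$ lie in $[0,\ell'-1]$. For each such twin $c$, $(K,s)$-regularity guarantees $c\in X_1$ provided $r_{U,V}(c)\ge K$; the number of exceptional twins with $r_{U,V}(c)<K$ is controlled by a careful count on the low side using $0\in V$ and $N_\Gamma(0)\subset U$.

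\smallskip

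\noindent\textbf{Step 3 (Bound on the projection).} One checks that $\bar\Gamma$ is $(2K,s)$-regular in $\Z/m\Z$: the degree condition is preserved and, since $r_{\bar U,\bar V}(\bar c) = r_{U,V}(c)+r_{U,V}(c+m)$, having $r_{\bar U,\bar V}(\bar c)\ge 2K$ forces one of the two integer lifts to have representation count at least $K$, hence to lie in $U\stackrel{\Gamma}{+}V$. Two cases arise. If $\bar U\stackrel{\bar\Gamma}{+}\bar V = \bar U+\bar V$, Kneser's theorem applied to $\bar U+\bar V$---combined with the fact that $\gcd(U\cup V)=1$ makes $\bar U\cup\bar V$ generate $\Z/m\Z$, ruling out large stabilisers---yields $|\bar U+\bar V|\ge\min(m, |U|+|V|-1)$. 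In the regime $\ell\le|U|+|V|-2K-2$ one has $|U|+|V|-1\ge m$, so $|\bar U\stackrel{\bar\Gamma}{+}\bar V|=m=\ell+1$, which combined with Step~2 gives $\ell+|V|-2s$. Otherwise, $\bar U\stackrel{\bar\Gamma}{+}\bar V\neq\bar U+\bar V$ and Theorem~\ref{prop:robustkneser} yields $|\bar U\stackrel{\bar\Gamma}{+}\bar V|\ge \max(|U|,|V|)+n/2-2K-2s$; combining with Step~2 and using $|V|\ge n$ yields the second-case bound $|U|+|V|+n/2-4s-2K-2$.

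\smallskip

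\noindent\textbf{Main obstacle.} The delicate step is Step~2: turning the easy cardinality bound $|X_3|\ge|V|-s-1$ into a lower bound on the overlap $|\phi(X_1)\cap\phi(X_3)|$ requires pairing each high sum with its twin through a careful use of $(K,s)$-regularity, rather than a naive double-counting. A secondary bookkeeping issue, which accounts for the $2K$ (rather than $K$) appearing in the stated bounds, is the factor-of-two loss in the regularity parameter when passing to $\Z/m\Z$, which is forced by the fact that each residue has up to two integer preimages in $[0,\ell+\ell']$.
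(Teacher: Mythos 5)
Your high-level plan (project to a cyclic group, use Kneser or its robust version, lift back) matches the paper, but there are two concrete gaps, one of which looks fatal to the argument as sketched.

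\textbf{Step 2 does not go through.} The paper projects modulo $\ell$, so that $0$ and $\ell$ (both in $U$) are identified; then for any $c\in N_\Gamma(0)\cap N_\Gamma(\ell)\subset V$, the two edges $(0,c)$ and $(\ell,c)$ directly place $c$ and $c+\ell$ in $U\stackrel{\Gamma}{+}V$, and these collide modulo $\ell$. This gives $|V|-2s$ collisions with a one-line inclusion--exclusion. Your projection is modulo $\ell+1$, so the two preimages of a residue differ by $\ell+1$, a distance \emph{larger} than the diameter of either $U$ or $V$; there is no analogous direct pairing of edges of $\Gamma$. You try to salvage this by arguing that for $v\in N_\Gamma(\ell)$ the twin $v-1$ of $\ell+v$ should be realized because $r_{U,V}(v-1)\ge K$, but nothing forces this. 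Consider $U=\{0,2,4,\ldots,2a,2a+1\}$, $V=\{0,2,\ldots,2b\}$ with $a\ge b$ (so $\gcd(U\cup V)=1$) and $\Gamma=U\times V$: here $v-1$ is odd for every $v\in V$ while $U+V$ contains no odd integers below $2a+1$, so $r_{U,V}(v-1)=0$ for every $v\in N_\Gamma(\ell)$. The required collisions in $\phi(X_1)\cap\phi(X_3)$ still exist in this example, but they come from sums involving interior elements of $U$, not from $\ell+N_\Gamma(\ell)$; your proposed bookkeeping does not see them. In short, the choice $m=\ell+1$ forfeits exactly the structural coincidence ($\min U=\min V$ and $\max U$ being identified with $\min U$) that makes Step~2 a one-liner in the paper.

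\textbf{Step 3 has a wrong use of Kneser and omits the hard case.} Kneser's theorem only gives $|\bar U+\bar V|\ge|\bar U+H|+|\bar V+H|-|H|$ for the stabiliser $H$; having $\bar U\cup\bar V$ generate $\Z/m\Z$ does not force $H$ to be trivial, and when $H$ is a proper nontrivial subgroup the bound $|\bar U+\bar V|\ge\min(m,|U|+|V|-1)$ simply fails. More importantly, your case split ``$\bar U\stackrel{\bar\Gamma}{+}\bar V=\bar U+\bar V$ and $\ell\le|U|+|V|-2K-2$'' versus ``$\bar U\stackrel{\bar\Gamma}{+}\bar V\ne\bar U+\bar V$'' leaves out the case where the projected restricted sumset equals the projected full sumset \emph{and} $\ell>|U|+|V|-2K-2$. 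That is the paper's Case~3, which is where the bulk of the work lies: one must invoke Kneser to get a stabiliser $H$, split $U\stackrel{\Gamma}{+}V$ into the three classes $C_1,C_2,C_3$ according to which $H$-coset their projections lie in, and separately bound each class (distinguishing further whether $\bar U$ lies in a single $H$-coset or not). None of this appears in your proposal, and the bound you want cannot be extracted from a bare application of Kneser in that regime. If you switch the modulus to $\ell$ and add the Case~3 analysis, the rest of your outline (the $(2K,s)$-regularity of the projection and the use of Theorem~\ref{prop:robustkneser} when the restricted and full projected sumsets differ) is correct and matches the paper.
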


\begin{proof} Let $f:\Z\to \Z/\ell\Z$ be the canonical projection. We write $f(x)=\tilde{x}$ and a similar notation for images of sets. From $U,V, \Gamma$ we build the modular version $\tilde{U}, \tilde{V}, \tilde{\Gamma}$ in $\Z/\ell \Z$. We have $|\tilde{U}|\geq |U|-1$ and $|\tilde{V}|\ge |V|-1$.

\begin{claim} 
$\tilde{\Gamma}$ is $(2K,s)$-regular.
\end{claim}

\begin{proof}
The $s$ missing edges incident to each vertex in $\Gamma$ produce at most $s$ missing edges incident to $\tilde{x}$ in $\tilde{\Gamma}$. 
On the other hand, since $\ell =\max (U)\ge \max (V)$, the preimage of each color different from zero in $\tilde{U}\times \tilde{V}$ produces at most two colors in $U\times V$. Hence, every nonzero color appearing at least $2K$ times in $\tilde{U}\times \tilde{V}$ must be present in $\Gamma$ and therefore it must also be present in $\tilde{\Gamma}$. If $\tilde{0}$ appears more than $2K$ times in $\tilde{U}\times \tilde {V}$, since $0$ and $2\ell$ appear at most one time in $U\times V$ and $K\ge 2$, then $\ell$ must appear at least $K$ times and the color is in $\Gamma$ (and hence in $\tilde{\Gamma}$).    
Thus $\tilde{\Gamma}$ is $(2K,s)$--regular. 
\end{proof}

Now note that for every element $c \in N_\Gamma(0)\cap N_\Gamma(\ell)\subset V$, $c$ and $c+\ell$ are distinct elements in $U\stackrel{\Gamma}{+} V$, but are mapped to the same element in $\Z/\ell\Z$.
Since $|N_\Gamma(u)|\geq |V|-s$ for any $u\in U$ by $(K,s)$-regularity, using inclusion exclusion we see that
\begin{equation}\label{eq:mod}
|U\stackrel{\Gamma}{+} V| \ge  |\tilde{U}\stackrel{\tilde{\Gamma}}{+} \tilde{V}|+ |N_\Gamma(0)\cap N_\Gamma(\ell)| \geq |\tilde{U}\stackrel{\tilde{\Gamma}}{+} \tilde{V}| + |V|-2s.
\end{equation}
We are now going to analyze different cases and show that in all of these, Equation~\eqref{eq:mod} will imply the theorem statement.

\paragraph{Case 1: $\ell \leq |U|+|V|-2K-2$.}\,

\noindent In this case every $\tilde{x}\in \Z/\ell \Z$ appears in $\tilde{U}\times \tilde{V}$ at least
\begin{align*}
|\tilde{U}\cap (\tilde{x}-\tilde{V})|&=|\tilde{U}|+|\tilde{V}|-  |\tilde{U}\cup (\tilde{x}-\tilde{V})|\\
&\ge |\tilde{U}|+|\tilde{V}|-\ell\\
&\ge |U|+|V|-2-\ell\\
&\geq 2K,
\end{align*}
times, and hence it appears in $\tilde{\Gamma}$ by $(2K,s)$-regularity. Therefore, $\tilde{U}\stackrel{\tilde{\Gamma}}{+}\tilde{V}=\Z/\ell Z$ and \eqref{eq:mod} gives
$$
|U\stackrel{\Gamma}{+} V|\ge \ell +|V|-2s,
$$
as claimed.

\paragraph{Case 2: $\ell > |U|+|V|-2K-2$ and $\tilde{U}\stackrel{\tilde{\Gamma}}{+} \tilde{V}\neq \tilde{U}+\tilde{V}$.}\,

\noindent In this case we can apply Theorem~\ref{prop:robustkneser} and see that 
$$
|\tilde{U}\stackrel{\tilde{\Gamma}}{+} \tilde{V}|\ge \max\{|\tilde{U}|,|\tilde{V}|\}+\frac{n-1}{2}-2K-2s \geq |U|+\frac{n}{2}-2K-2s-\frac{3}{2},
$$
and \eqref{eq:mod} yields
$$
|U\stackrel{\Gamma}{+} V|\ge |U|+|V|+\frac{n}{2}-2K-4s-\frac{3}{2},
$$
as claimed. 

\paragraph{Case 3: $\ell > |U|+|V|-2K-2$ and $\tilde{U}\stackrel{\tilde{\Gamma}}{+} \tilde{V}= \tilde{U}+\tilde{V}$.}\,

\noindent Here there will essentially be two sub-cases.
The easier one is if the restricted sumset is large.
Specifically, if $| \tilde{U}+\tilde{V}|\ge |\tilde{U}|+(n-1)/2$, then again \eqref{eq:mod} yields
$$
|U\stackrel{\Gamma}{+} V|\ge |\tilde{U}+ \tilde{V}|+|V|-2s\ge  |U|+|V|+\frac{n}{2}-2s-\frac{3}{2},
$$  
and we are done.

Suppose now that $| \tilde{U}+\tilde{V}|< |\tilde{U}|+(n-1)/2$. 
Then Kneser's theorem~\cite{K1953} implies that there is a nonzero subgroup $H\le G$ such that $\tilde{U}+\tilde{V}+H=\tilde{U}+\tilde{V}$ and

\begin{equation}\label{eq:kneser}
|\tilde{U}+\tilde{V}|=| \tilde{U}+H|+|\tilde{V}+H|-|H|.  
\end{equation} 

If $H=\Z/\ell Z$, then \eqref{eq:mod} with our current hypothesis $\ell \ge |U|+|V|-2K-2$ gives the conclusion with room to spare. Suppose that $H$ is a proper subgroup. We now repeat the adaptation by Shao and Xu of Lev's argument. Let
\begin{align*}
C_1&=\{ c \in U\stackrel{\Gamma}{+} V: \tilde{c}\in \tilde{V}\},\\
C_2&=\{ c \in U\stackrel{\Gamma}{+} V: \tilde{c}\in (\tilde{V}+H)\setminus \tilde{V}\}, \text{ and}\\
C_3&=\{ c \in U\stackrel{\Gamma}{+} V: \tilde{c}\in (\tilde{U}+\tilde{V})\setminus (\tilde{V}+H)\},
\end{align*}
which are pairwise disjoint since $0\in (H\cap\tilde{U}\cap\tilde{V})$ implies $\tilde{V}\subset(\tilde{V}+H)\subset (\tilde{U}+\tilde{V})$.

Using the same argument that was used to justify \eqref{eq:mod} and noting that $\tilde{V}\subset \tilde{U}+\tilde{V}=\tilde{U}\stackrel{\tilde{\Gamma}}{+} \tilde{V}$, we have 
\begin{equation}\label{eq:C1_LB}
|C_1|\ge |\tilde{V}|+|V|-2s.
\end{equation}
For $C_2$, note that since $\tilde{U}\stackrel{\tilde{\Gamma}}{+}\tilde{V}=\tilde{U}+\tilde{V}$, every element in $(\tilde{V}+H)\setminus \tilde{V}\subset \tilde{U}+\tilde{V}$ has a preimage in $U\stackrel{\Gamma}{+} V$, so that
\begin{equation}\label{eq:C2_LB}
|C_2|\ge |\tilde{V}+H|-|\tilde{V}|.
\end{equation}
We start by showing that in certain cases we can obtain a good lower bound on $|\tilde{V}+H|$ and hence prove the theorem statement using only $C_1$ and $C_2$.
Since $\gcd (U\cup V)=1$ and $0\in U\cap V$, it cannot happen that both $\tilde{U}$ and $\tilde{V}$ are contained in a single coset of $H$ since we assumed that $H\neq \Z/\ell\Z$. 

We are now going to investigate different cases.
Suppose first that $\tilde{U}$ is contained in a single coset of $H$, that is $\tilde{U}+H=H$.
Again, recall that since $0\in \tilde{U}\cap\tilde{V}$ and the previous observation, this means that $\tilde{V}$ intersects at least two distinct cosets of $H$.
That is, we have \[|\tilde{V}+H|\ge 2|H|\geq |\tilde{U}|+n-1,\] so that \[|U\stackrel{\Gamma}{+}V|\ge |C_1|+|C_2|\ge |\tilde{V}+H|+|V|-2s\geq |U|+|V|+n-2s-2\]
and we are done. 

Assume now that $\tilde{U}+H\neq H$, that is, $\tilde{U}$ intersects at least two cosets of $H$.
This case will be more involved and in particular we will need $C_3$ to establish the theorem statement.
Let $N=|(\tilde{U}+\tilde{V})\setminus(\tilde{V}+H)|/|H|$ be the number of cosets of $H$ inside $\tilde{U}+\tilde{V} = \tilde{U}+\tilde{V}+H$ but outside $\tilde{V}+H$. 
By Kneser's theorem there is at least one such coset, say $\tilde{u}+\tilde{v}+H$ with $\tilde{u}\not\in H$. 
Note that this implies
\begin{equation}\label{eq:preimagesinC3}
f^{-1}(\tilde{u}+\tilde{v}+H)\cap (U\stackrel{\Gamma}{+} V)\subset C_3.
\end{equation}
Let $U'=f^{-1}(\tilde{u}+H)\cap U$ and $V'=f^{-1}(\tilde{v}+H)\cap V$ the elements of $U$ and $V$ that are projected into these respective cosets. 
Then $U'+V'\subset f^{-1}(\tilde{u}+\tilde{v}+H)\cap (U+V)$.
On the other hand we see that $U'+V'$ contains the $|U'|+|V'|-1$ pairwise distinct elements of the form \[\min(U')+V'\quad\text{and}\quad U'+\max(V').\]
Since $\Gamma$ is $(K,s)$-regular and the above elements only live in the neighborhood of two vertices, at most $2s$ of these elements can be missing from $U\stackrel{\Gamma}{+}V$, and hence
\begin{equation}\label{eq:a'b'}
|f^{-1}(\tilde{u}+\tilde{v}+H)\cap (U\stackrel{\Gamma}{+} V)|\ge |U'|+|V'|-2s-1.
\end{equation}
By inserting in \eqref{eq:a'b'} the estimates
$$
|H|-|U'|\le |(\tilde{U}+H)\setminus \tilde{U}|,\;\; |H|-|V'|\le |(\tilde{V}+H)\setminus \tilde{V}|,
$$
and using~\eqref{eq:kneser} we thus obtain
\begin{equation}\label{eq:singleLB}
\begin{aligned}
|f^{-1}(\tilde{u}+\tilde{v}+H)\cap (U\stackrel{\Gamma}{+} V)| &\geq 2|H|+|\tilde{U}|+|\tilde{V}|-|\tilde{U}+H|-|\tilde{V}+H|-2s-1\\
&= |H|+|\tilde{U}|+|\tilde{V}|-|\tilde{U}+\tilde{V}|-2s-1.
\end{aligned}
\end{equation}
We see that clearly, \eqref{eq:preimagesinC3} and~\eqref{eq:singleLB} hold for every of the $N$ pairwise distinct cosets $\tilde{u}+\tilde{v}+H$ outside of $\tilde{V}+H$.
Furthermore, we also have that the sets $f^{-1}(\tilde{u}+\tilde{v}+H)$ are disjoint for distinct cosets.
We can hence combine~\eqref{eq:preimagesinC3} and~\eqref{eq:singleLB} to lower bound the cardinality of $C_3$ as follows, using the definition of $N$ as well as the fact that $N\geq 1$.
\begin{equation}\label{eq:C3_LB}
\begin{aligned}
|C_3| &\ge N(|H|+|\tilde{U}|+|\tilde{V}|-|\tilde{U}+\tilde{V}|-2s-1)\\
&\ge N|H|+(|\tilde{U}|+|\tilde{V}|-|\tilde{U}+\tilde{V}|-2s-1)\\
&=|(\tilde{U}+\tilde{V})\setminus(\tilde{V}+H)|+(|\tilde{U}|+|\tilde{V}|-|\tilde{U}+\tilde{V}|-2s-1)\\
&=|\tilde{U}|+|\tilde{V}|-|\tilde{V}+H|-2s-1.
\end{aligned}
\end{equation}
Finally, combining~\eqref{eq:C1_LB},~\eqref{eq:C2_LB} and~\eqref{eq:C3_LB} yields \[|U\stackrel{\Gamma}{+}V|\ge |C_1|+|C_2|+|C_3|\ge |\tilde{U}|+|\tilde{V}|+|V|-4s-1\geq |U|+|V|+n-4s-3.\]
This completes the proof.
\end{proof}

Now we can proceed with the proof of Theorem~\ref{prop:freiman3k4robust}.
For a finite set $X\subset G$ we will denote by $\ell(X)$ the size of its convex hull, that is $\ell(X)=\max(X)-\min(X)+1$.

\begin{proof}[Proof of Theorem~\ref{prop:freiman3k4robust}]

We begin by extracting a large regular subgraph of $\Gamma$ in order to apply Proposition~\ref{prop:almost1}.
Let \[U'=\{u\in U : d_\Gamma(u)\geq (1-\sqrt{\epsilon})|V|\},\] and observe that since \[(1-\epsilon)|U||V|\leq |\Gamma| \leq |U'||V|+(1-\sqrt{\epsilon})(|U|-|U'|)|V|,\] it holds that $|U'|\geq (1-\sqrt{\epsilon})|U|$.
Similarly, if $V'$ is the set of $v\in V$ with $d_\Gamma(v)\geq (1-\sqrt{\epsilon})|U|$, we have $|V'|\geq (1-\sqrt{\epsilon})|V|$.
If $\Gamma_1 = \Gamma \cap (U'\times V')$ is the restriction of $\Gamma$, then for every $u\in U'$ we see that \[d_{\Gamma_1}(u)=|N_{\Gamma}(u)\cap V'| \geq d_\Gamma(u)+|V'|-|V|\geq |V'|-\sqrt{\epsilon}|V|.\]
Similarly, $d_{\Gamma_1}(v)\geq |U'|-\sqrt{\epsilon}|U|$ for every $v\in V'$.
We may assume that $[U']=[0,\ell(U')]$, $[V']= [0,\ell(V')]$ and $\gcd(U'\cup V')=1$.
Furthermore, without loss of generality assume that \[\ell(U')\geq\ell(V').\]
We are almost in the position to apply Proposition~\ref{prop:almost1}.
One reason why we should not do it yet is that, if the shorter set is also the smaller one we would run into issues.
To prevent this, we introduce the following notion.
For a set $X$ with $[X]=[0,\ell(X)]$, denote by $h(X)=\ell(X)-|X|+1$ the number of holes of $X$.
We next split the remainder of the proof into two cases depending on which of $U'$ and $V'$ contains more holes.

The intuition is that if the longer set also contains more holes, then necessarily the shorter set must be larger and hence a straightforward application of Proposition~\ref{prop:almost1} will leads us to our goal.
If on the other hand the shorter set has more holes, we instead apply Proposition~\ref{prop:almost1} only to an appropriately truncated segment and get a single copy of the remainder ``for free''.
Let us make this precise, starting with the easy case.

\paragraph{Case 1. $h(U')> h(V')$.} 
Set $K=s=\sqrt{\epsilon}M$, and define \[\Gamma'=\Gamma_1\cup\{(u,v)\in U'\times V' : r_{U',V'}(u+v)\geq K\}.\]
Note that by doing this we have added at most \[\frac{(U'\times V')\setminus \Gamma_1}{K}\leq \frac{(U\times V)\setminus \Gamma}{K}\leq \sqrt{\epsilon}N\] elements $x\in (U'\stackrel{\Gamma'}{+}V')\setminus (U'\stackrel{\Gamma_1}{+}V')$, and hence 
\begin{equation}\label{eq:gammaprimenottoolarge}
|U\stackrel{\Gamma}{+}V| \geq |U'\stackrel{\Gamma'}{+}V'| - \sqrt{\epsilon}N.
\end{equation}
Since $\Gamma'$ is $(K,s)$-regular by construction, we can apply Proposition~\ref{prop:almost1} to get 
\begin{equation}\label{eq:gammaprimeLB}
|U'\stackrel{\Gamma'}{+} V'|\ge 
\begin{cases} 
\ell(U') + |V'| - 2s, & \ell(U') \le |U'|+|V'|-2K-2\\ 
|U'|+|V'|+\frac{\min\{|U'|,|V'|\}}{2}-4s-2K-2, & \ell(U') > |U'|+|V'|-2K-2.
\end{cases}
\end{equation}
Note that by our lower bounds on $|U'|,|V'|$ and~\eqref{eq:gammaprimenottoolarge}, the second line of~\eqref{eq:gammaprimeLB} would imply 
\begin{align*}
|U\stackrel{\Gamma}{+}V|&\geq |U'\stackrel{\Gamma'}{+}V'|-\sqrt{\epsilon}N\\
&\geq (1-\sqrt{\epsilon})(|U|+|V|+N/2)-6\sqrt{\epsilon}M-2-\sqrt{\epsilon}N\\
&\geq |U|+|V|+\frac{N}{2}-11\sqrt{\epsilon}M,
\end{align*}
which violates our initial assumption on $|U\stackrel{\Gamma}{+}V|$, so the first case must hold.
In particular,
\begin{align*}
\ell(U') &\leq |U'\stackrel{\Gamma'}{+}V'|-|V'|+2\sqrt{\epsilon}M\\
&\leq |U\stackrel{\Gamma}{+}V|-(1-\sqrt{\epsilon})|V|+2\sqrt{\epsilon}M+\sqrt{\epsilon}N\\
&< |U|+r+4\sqrt{\epsilon}M,
\end{align*}
and similarly using $h(U')>h(V'),$ \[\ell(V') < \ell(U')+|V'|-|U'| < |V|+r+4\sqrt{\epsilon}M.\]
\paragraph{Case 2. $h(U')\leq h(V')$.} 
Define $U_1'=U'\cap[0,\ell(V')]$, $U_2'=U'\setminus U_1'$ and $\Gamma_1'=\Gamma\cap (U_1'\times V')$.
We see that by definition of $V'$ it holds that at most $\sqrt{\epsilon}|U|$ of the $|U'|$ elements in $U'+\max(V')$ are missing from $U'\stackrel{\Gamma_1}{+}V'$.
This implies
\begin{equation}\label{eq:splitsumset}
|U\stackrel{\Gamma}{+}V|\geq |U'\stackrel{\Gamma_1}{+}V'|\geq |U_1'\stackrel{\Gamma_1'}{+}V'|+|U_2'|-\sqrt{\epsilon}|U|.
\end{equation}
Since $h(U_1')\leq h(U')\leq h(V')$ and $\ell(U_1')=\ell(V')$ by construction, we have $|U_1'|\geq |V'|$.
Furthermore, by definition of $U'$, for every $u\in U_1'$ it holds that $d_{\Gamma_1'}(u)\geq |V'|-\sqrt{\epsilon}|V|$, and similarly, $d_{\Gamma_1'}(v)\geq |U_1'|-\sqrt{\epsilon}|U|$ for every $v\in V'$.
Setting $K=s=\sqrt{\epsilon}M$ and defining \[\Gamma'' = \Gamma_1' \cup \{(u,v)\in U_1'\times V' : r_{U_1',V'}(u+v)\geq K\},\] we again see that
\begin{equation}\label{eq:gammaprimeprimeNotTooLarge}
|U_1'\stackrel{\Gamma_1'}{+}V'|\geq |U_1'\stackrel{\Gamma''}{+}V'|-\sqrt{\epsilon}N.
\end{equation}
Again, $\Gamma''$ is $(K,s)$-regular by construction, and so applying Proposition~\ref{prop:almost1} we get 
\begin{equation}\label{eq:gammaprimeprimeLB}
|U_1'\stackrel{\Gamma''}{+} V'|\ge 
\begin{cases} 
\ell(V') + |U_1'| - 2s, & \ell(V') \le |U'|+|V'|-2K-2\\ 
\frac{3}{2}|V'|+|U_1'|-4s-2K-2, & \ell(V') > |U'|+|V'|-2K-2.
\end{cases}
\end{equation}
Putting together~\eqref{eq:gammaprimeprimeNotTooLarge} and~\eqref{eq:splitsumset}, the second line of this would imply
\begin{align*}
|U\stackrel{\Gamma}{+}V| &\geq |U_1'\stackrel{\Gamma''}{+} V'|+|U_2'|-\sqrt{\epsilon}|U|-\sqrt{\epsilon}N\\
&\geq \frac{3}{2}|V'|+|U'|-10\sqrt{\epsilon}M\\
&\geq |U|+|V|+\frac{N}{2}-13\sqrt{\epsilon}M,
\end{align*}
a contradiction to our initial assumption.
Hence the first case of~\eqref{eq:gammaprimeprimeLB} must hold, which implies
\begin{align*}
\ell(V') &\leq |U_1'\stackrel{\Gamma''}{+} V'|-|U_1'|+2\sqrt{\epsilon}M\\
&\leq |U\stackrel{\Gamma}{+}V|-|U'|+4\sqrt{\epsilon}M\\
&\leq |V|+r+5\sqrt{\epsilon}M.
\end{align*}
Since $h(U')\leq h(V')$, we also have \[\ell(U')\leq \ell(V')-|V'|+|U'|\leq |U|+r+5\sqrt{\epsilon}M.\]
This completes the proof.
\end{proof}

\section{Proofs of the Main Results}\label{sec:mainproofs}

In this section we will prove more technical versions of Theorems~\ref{thm:easymain_structure} and~\ref{thm:easymain_counting}.
We start with the structural result.

\begin{thm}\label{thm:approximatestructure}
Let $s_1,s_2,n$ be integers and $\alpha>0$ a fixed real number satisfying \[s_2 \geq s_1\geq 2^{10}\alpha^{-1}(s_1+s_2)^{11/12}(\log n)^{1/4},\] and let $m$ be an integer such that \[{(1+\alpha)(s_1+s_2)\leq m < 2^{-108}\alpha^{12}s_1^{12}(s_1+s_2)^{-10}(\log n)^{-3}.}\]
Suppose $X_1,X_2\subset[n]$ are two uniformly chosen random sets with $|X_1|=s_1$, $|X_2|=s_2$ and $|X_1+X_2|\leq m$.
With probability at least ${1-\exp(-2^{5}m^{1/6}(s_1+s_2)^{2/3}\sqrt{\log n})}$ the following holds: there are sets $T_i\subset X_i$ of size ${|T_i| \leq 2^{11}\alpha^{-1} m^{1/6}(s_1+s_2)^{2/3}\sqrt{\log n}}$, such that $X_i\setminus T_i$ is contained in an arithmetic progression $P_i$ of size \[\frac{s_im}{s_1+s_2}+2^6 m^{13/12}(s_1+s_2)^{-1/6}(\log n)^{1/4},\] where $P_1$ and $P_2$ have the same common difference.
\end{thm}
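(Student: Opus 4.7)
The plan is to run the standard container-plus-stability machinery: first apply Theorem~\ref{thm:containerfamily} to produce a small family $\calA$ of triples $(A_1,A_2,B)$ such that every admissible pair $(X_1,X_2)$ is ``fingerprinted'' by some $(A_1,A_2,B)\in\calA$ with $X_i\subset A_i$ and $B\subset X_1+X_2$; then, for each non-trivial container, extract arithmetic progressions $P_1,P_2$ with a common difference using Corollary~\ref{cor:relativestability}. The controlling parameter will be $\eps:=c\,\alpha^{-1}m^{-5/6}(s_1+s_2)^{2/3}\sqrt{\log n}$ for a suitable absolute $c>0$; this choice is dictated by equating $\eps m$ with the target bound on $|T_i|$, and the upper bound on $m$ in the hypothesis is precisely what is needed to make $\eps$ small enough for Corollary~\ref{cor:relativestability} to apply (namely $\eps\le 2^{-8}(s_1/(s_1+s_2))^2$). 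The same hypothesis gives $s_2\le m\le s_1^2\log n$, so Theorem~\ref{thm:containerfamily} is applicable and supplies $|\calA|\le\exp(2^{20}\eps^{-2}\sqrt{m}(\log n)^{3/2})$.

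For each container $(A_1,A_2,B)\in\calA$, property (b) of Theorem~\ref{thm:containerfamily} yields a dichotomy: either $\max\{|A_1|,|A_2|\}\le m/\log n$ (call the container \emph{small}), or fewer than $\eps^2|A_1||A_2|$ pairs $(a_1,a_2)\in A_1\times A_2$ satisfy $a_1+a_2\notin B$. In the second case the supersaturation Corollary~\ref{cor:supersat} (with $G=\Z$, hence $\beta\le 1$) forces $|A_1|+|A_2|\le(1+O(\sqrt{\eps}))|B|$; combined with $|A_i|\ge s_i$, the uniformity assumption $s_1=\Omega(s_2)$, and, when necessary, a further refinement of the container via a second application of Theorem~\ref{thm:container}, each $|A_i|$ is pinned into a range in which the hypotheses of Corollary~\ref{cor:relativestability} are satisfied. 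Its ``many missing edges'' alternative is ruled out by the few-missing-edges property, so its ``close to APs'' alternative delivers arithmetic progressions $P_1,P_2$ with a common difference, of lengths $\le\tfrac{s_im}{s_1+s_2}+4\sqrt{\eps}m$, containing $A_i$ up to $\eps|A_i|\le\eps m$ exceptions. Setting $T_i:=X_i\setminus P_i$ for any valid $(X_1,X_2)$ contained in such a container produces exactly the structure claimed in the theorem.

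For the probability estimate I would compare the count of admissible pairs contained in small containers, at most $|\calA|\cdot\prod_i\binom{m/\log n}{s_i}$, with the lower bound from the AP construction: for each arithmetic progression $P\subset[n]$ of length $m+1$, splitting it into two consecutive sub-APs $P_1,P_2$ of lengths $\lceil s_im/(s_1+s_2)\rceil$, every choice $X_i\subset P_i$ with $|X_i|=s_i$ satisfies $X_1+X_2\subset P$, producing at least $\prod_i\binom{\lfloor s_im/(s_1+s_2)\rfloor}{s_i}$ admissible pairs. The resulting ratio is bounded by $|\calA|\cdot(C/\log n)^{s_1+s_2}$ for some constant $C=C(\alpha)$ (using $s_1=\Omega(s_2)$ to relate $s_i$ and $s_1+s_2$). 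Under the standing hypotheses---in particular the lower bound $s_1\ge 2^{10}\alpha^{-1}(s_1+s_2)^{11/12}(\log n)^{1/4}$---this is in turn dominated by $\exp(-2^5 m^{1/6}(s_1+s_2)^{2/3}\sqrt{\log n})$, giving the stated failure probability.

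The main obstacle, as is usual for this framework, is the tight parameter bookkeeping: the single scalar $\eps$ enters the container family size, the missing-edge fraction, the AP-length slack, and the exceptional-set bound with four different exponents, and all must be simultaneously controlled by the rigid quantitative relations among $\alpha$, $m$, $s_1$, and $n$. A secondary but genuine difficulty is handling those containers in which $|A_i|$ falls far outside the balanced range $[s_i,\tfrac{s_im}{s_1+s_2}+2\sqrt{\eps}m]$ for some $i$: either one re-enters Theorem~\ref{thm:container} to refine such containers, or one exploits that in such a case $|A_{3-i}|$ is squeezed near $s_{3-i}$, so that the contribution of such containers is negligible by a direct counting.
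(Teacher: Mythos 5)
Your overall blueprint---apply Theorem~\ref{thm:containerfamily}, invoke Corollary~\ref{cor:supersat} to pin $|A_1|+|A_2|\approx|B|$, then Corollary~\ref{cor:relativestability} to extract the arithmetic progressions, and finally compare against the lower bound $\prod_i\binom{s_im/(s_1+s_2)}{s_i}$ coming from a fixed pair of consecutive sub-progressions---is the same as the paper's. But there is a genuine gap that is not bookkeeping: your choice of $\eps$ is incompatible with the container count, and the source of the error is that you bound $|T_i|$ \emph{deterministically} rather than probabilistically.

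You set $T_i:=X_i\setminus P_i$ and bound $|T_i|\le|A_i\setminus P_i|\le\eps|A_i|\le\eps m$. This forces you to take $\eps\approx\alpha^{-1}m^{-5/6}(s_1+s_2)^{2/3}\sqrt{\log n}$ so that $\eps m$ matches the target $2^{11}\alpha^{-1}m^{1/6}(s_1+s_2)^{2/3}\sqrt{\log n}$. But plugging this $\eps$ into the container-family size gives
\[
\eps^{-2}\sqrt{m}(\log n)^{3/2}\;\approx\;\alpha^{2}\,m^{13/6}(s_1+s_2)^{-4/3}\sqrt{\log n}\;=\;\alpha^2\left(\frac{m}{s_1+s_2}\right)^{2}\cdot m^{1/6}(s_1+s_2)^{2/3}\sqrt{\log n},
\]
which exceeds the target failure exponent $m^{1/6}(s_1+s_2)^{2/3}\sqrt{\log n}$ by the unbounded factor $(m/(s_1+s_2))^2$. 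Since $m$ can be much larger than $s_1+s_2$ (indeed $m$ can be as large as $\approx s_1^{12}(s_1+s_2)^{-10}(\log n)^{-3}$), the family $\calA$ is too big and the union bound does not close. The paper instead takes $\eps=2^{8}m^{1/6}(s_1+s_2)^{-1/3}\sqrt{\log n}$, larger by a factor $m/(s_1+s_2)$. With this $\eps$ the deterministic bound $\eps|A_i|\le\eps m$ is far too weak, and the improvement comes from the randomness you never use: since $X_i$ is a uniform $s_i$-subset of $A_i$ and $|A_i\setminus P_i|\le\eps|A_i|$, a \emph{typical} $X_i$ has only $\approx\eps s_i$ elements outside $P_i$, a factor $|A_i|/s_i\approx m/(s_1+s_2)$ smaller. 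The paper makes this precise by counting, over all containers and all possible sizes $s_i'\ge 8\alpha^{-1}\eps(s_1+s_2)$ of the offending intersection, the number of ways to place $s_i'$ elements of $X_i$ in a set of size $\le\eps|A_i|$ and the remaining $s_i-s_i'$ elements in $A_i$ (see Eq.~\eqref{eq:type_c_sets_2} and the $\binom{a}{c-d}\binom{b}{d}\le\binom{a}{c}(4bc/(ad))^d$ bound). That counting---not a deterministic inclusion---is where the theorem's $|T_i|$ bound comes from, and it is essential.

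Two smaller points. First, your scheme for the ``small container'' case relies on the coarse dichotomy $\max|A_i|\le m/\log n$ from Theorem~\ref{thm:containerfamily}(b); the paper actually establishes a three-way split (containers with $|A_1|+|A_2|\le(1-\eps)m$; containers with some $|A_i|>s_im/(s_1+s_2)+2\sqrt{\eps}m$; and the ``close to AP'' containers), and the middle case requires Lemma~\ref{lem:binom_prod_bound}, a bespoke product-of-binomials deviation bound which you have not supplied. You gesture at ``a second application of Theorem~\ref{thm:container}'' or ``exploiting that $|A_{3-i}|$ is squeezed,'' but neither is how the paper handles it, and neither is clearly workable; the binomial estimate is the missing ingredient. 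Second, you do not need $\beta\le1$ for $G=\Z$ to deduce $|A_1|+|A_2|\le(1+2\eps)m$: the paper simply applies Corollary~\ref{cor:supersat} directly with $|B|\le m$, which is cleaner and what actually works.
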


The presence of the $\alpha$ parameter is of a technical nature in the following sense.
If $m$ is very close to $s_1+s_2$, say $m=(1+o(1))(s_1+s_2)$, then some of the computations break down.
In particular it becomes difficult to apply a technical result, Lemma~\ref{lem:binom_prod_bound} which we require to prove Theorem~\ref{thm:approximatestructure}.
On the other hand, from a moral standpoint it should only help that the parameter $m$ is smaller.
In fact, in many situations such smaller values of $m$ can be handled directly by the asymmetric version of Freiman's $3k-4$ theorem, as detailed in the following proof of Theorem~\ref{thm:easymain_structure} using Theorem~\ref{thm:approximatestructure}.

\begin{proof}[Proof of Theorem~\ref{thm:easymain_structure}]
If $m=(1+o(1))(s_1+s_2)=s_1+s_2+o(s_1)$, we can apply an appropriate asymmetric version of Freiman's $3k-4$ theorem (see for instance~\cite{LS1995}) directly and see that any sets $X_1$, $X_2$ satisfying the theorem hypotheses are contained in arithmetic progressions $P_1$ and $P_2$ with the same common difference of size \[|P_i|=(1+o(1))s_i = (1+o(1))s_i m / (s_1+s_2).\] 
If on the other hand there exists some absolute constant $\alpha >0$ such that $m\geq (1+\alpha)(s_1+s_2)$, we can apply Theorem~\ref{thm:approximatestructure} instead.
\end{proof}

In order to prove Theorem~\ref{thm:approximatestructure}, we need the following bound on the product of two particular binomial coefficients which cannot be derived from available and more generic bounds. 
The proof is straightforward but rather lengthy and might distract somewhat from the main thrust of the paper, so we include it in Appendix~\ref{sec:largedeviationbound}.

\begin{lem}\label{lem:binom_prod_bound}
Let $m$, $s$ and $t$ be positive integers and let $1\geq \alpha>0$ such that $m\geq (1+\alpha)(s+t)$ and $s+t\geq 2^5 \alpha^{-1}$.
If $\epsilon>0$ satisfies \[\frac{2^{10}\min(s^2,t^2)}{(s+t)^2 m^2} \leq \epsilon \leq \frac{\alpha^2 \min(s^2,t^2)}{2^{10}(s+t)^2},\] then
\begin{equation}\label{eq:binom_prod_bound}
\binom{\left(\frac{t}{s+t}-2\sqrt{\epsilon}+2\epsilon\right)m}{t}\binom{\left(\frac{s}{s+t}+2\sqrt{\epsilon}\right)m}{s} \leq e^{-\epsilon(s+t)}\binom{\frac{sm}{s+t}}{s}\binom{\frac{tm}{s+t}}{t}.
\end{equation}
\end{lem}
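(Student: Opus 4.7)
The plan is to take logarithms and reduce the inequality to a careful Taylor expansion calculation. Set $a = sm/(s+t)$, $b = tm/(s+t)$, and $\delta = 2\sqrt{\epsilon}\,m$, so that the left-hand side of~\eqref{eq:binom_prod_bound} equals $\binom{a+\delta}{s}\binom{b-\delta+2\epsilon m}{t}$. Denoting by $L$ the logarithm of the ratio of the LHS to the RHS without the exponential factor, the goal becomes $L \leq -\epsilon(s+t)$. Using the product identity $\binom{x+c}{k}/\binom{x}{k} = \prod_{i=0}^{k-1}(x+c-i)/(x-i)$ I would write
\[
L \;=\; \sum_{i=0}^{s-1}\log\!\left(1+\frac{\delta}{a-i}\right) \;+\; \sum_{i=0}^{t-1}\log\!\left(1-\frac{\delta-2\epsilon m}{b-i}\right).
\]
The first check is that the hypotheses $m \geq (1+\alpha)(s+t)$ and $\epsilon \leq \alpha^2\min(s,t)^2/(2^{10}(s+t)^2)$ force both $\delta/(a-s+1)$ and $(\delta-2\epsilon m)/(b-t+1)$ to be bounded by a small absolute constant (say $1/8$): indeed $a-s \geq s\alpha$ and $b-t \geq t\alpha$ from $m-s-t \geq \alpha(s+t)$, so these quantities are of order $\sqrt{\epsilon}\,m(s+t)/(\min(s,t)\alpha(s+t))$, which the upper bound on $\epsilon$ makes harmlessly small. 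This guarantees that a Taylor expansion of $\log(1+x)$ to second order with cubic remainder is valid.

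I would then expand $\log(1+x) = x - x^2/2 + O(|x|^3)$ and regroup as
\[
L \;\leq\; \delta A - (\delta-2\epsilon m) B \;-\; \tfrac{1}{2}\!\left(\delta^{2} A_2 + (\delta-2\epsilon m)^2 B_2\right) \;+\; E,
\]
where $A=\sum 1/(a-i)$, $B=\sum 1/(b-i)$, $A_2=\sum 1/(a-i)^2$, $B_2=\sum 1/(b-i)^2$, and $E$ is a cubic error. The crucial cancellation lies in the first-order part: the choice $a = sm/(s+t)$, $b=tm/(s+t)$ makes $a/(a-s) = b/(b-t) = m/(m-s-t)$, so integral approximations give $A \approx B \approx \log(m/(m-s-t))$. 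Writing $\delta A - (\delta-2\epsilon m) B = \delta(A-B) + 2\epsilon m\, B$ reduces the first-order contribution to a residual proportional to $|A-B|$ plus $2\epsilon m\,B$. A careful integral/Euler--Maclaurin estimate gives $|A-B| \lesssim (s+t)^2|s-t|/(stm(m-s-t))$ and $B \lesssim \log(m/(m-s-t))$.

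For the second-order part I would bound $A_2, B_2$ from below by integral estimates, yielding $A_2+B_2 \gtrsim (s+t)^3/(stm(m-s-t))$, and hence a negative contribution of size $\epsilon m(s+t)^3/(st(m-s-t))$. Since $(s+t)^3/(st) \geq 4(s+t)$ and $m/(m-s-t) \geq 1$, this negative contribution comfortably dominates $\epsilon(s+t)$ with room to spare, in fact by a factor of at least $\alpha^{-1}$. This room is precisely what absorbs the positive error pieces: the cubic remainder $E$ is controlled by the upper bound on $\epsilon$ (which keeps $|\delta/(a-i)|, |\delta/(b-i)|$ small, so cubic terms are dominated by second-order), while the first-order residuals $\delta(A-B)$ and $2\epsilon m\,B$ are controlled via the lower bound $\epsilon \geq 2^{10}\min(s,t)^2/((s+t)^2m^2)$, which is exactly calibrated so that $\sqrt{\epsilon}\,m$ is large enough to make these lower-order-in-$\delta$ corrections negligible compared with the quadratic savings.

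\paragraph{Main obstacle.} The delicate part is the bookkeeping in the previous paragraph: each of the three hypotheses (the lower bound on $\epsilon$, the upper bound on $\epsilon$, and the gap condition $m\geq(1+\alpha)(s+t)$) is used to absorb a different error term, and the constants must be chosen consistently so that the residuals together are at most, say, $\epsilon(s+t)/2$, leaving the second-order savings to give the remainder. In particular, tracking the asymmetry $s \neq t$ through the Euler--Maclaurin correction for $A-B$ and through the ``$+2\epsilon m$'' shift in the second binomial is where one must be most careful; one then simply verifies that all error terms are of lower order in $\epsilon$ or carry a sufficiently small constant to land inside the $\epsilon(s+t)$ budget.
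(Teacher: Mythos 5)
Your proposal matches the paper's proof essentially step for step: take logarithms, expand $\log(1+x)$ with a cubic Taylor remainder, approximate the resulting sums by integrals, observe that the leading $\pm 2\sqrt{\epsilon}\,m$ first-order pieces cancel because $a/(a-s)=b/(b-t)=m/(m-s-t)$, and let the second-order sums supply the dominant negative contribution $\asymp \epsilon m(s+t)^3/(st(m-s-t))\geq 8\epsilon(s+t)$. The only loose ends relative to the paper's bookkeeping are minor: the residual $2\epsilon m B$ is absorbed by the second-order budget (via $x\log(1+x^{-1})\leq 1$), not by the lower bound on $\epsilon$, and the hypothesis $s+t\geq 2^5\alpha^{-1}$ (which your sketch does not mention) is needed to absorb the Euler--Maclaurin endpoint corrections in the second-order sums.
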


We are now ready to prove Theorem~\ref{thm:approximatestructure}.

\begin{proof}[Proof of Theorem~\ref{thm:approximatestructure}]
The upper bound on $m$ in particular implies $m\leq s_1^2 \log n$, so let $\calA$ be the family obtained from Theorem~\ref{thm:containerfamily} applied with $G=\Z$, $h=2$, $F_1=F_2=[n]$ and $2^{-10}\alpha^2 s_1^2/(s_1+s_2)^2>\epsilon > 2^{10}s_1^2 (s_1+s_2)^{-2}m^{-2}$ to be specified later.
We claim that one of the following holds for every triple $(A_1,A_2,B)\in\calA$:
\begin{enumerate}
\item\label{item:container_small} $|A_1|+|A_2| \leq (1-\epsilon)m$,
\item\label{item:container_max_large} $|A_i| > \frac{s_im}{s_1+s_2}+2\sqrt{\epsilon}m$ for some $i\in\{1,2\}$, or
\item\label{item:container_close_to_ap} There are arithmetic progressions $P_1$, $P_2$ with the same common difference and sets $T_1$, $T_2$ such that $|P_i|\leq \frac{s_im}{s_1+s_2}+4\sqrt{\epsilon}m$, $|T_i|\leq \epsilon |A_i|$ and $A_i\setminus T_i\subseteq P_i$ for $i=1,2$.
\end{enumerate}
Note first that we always have $|A_1|+|A_2|\leq (1+2\epsilon)m$ since by Theorem~\ref{thm:containerfamily}\ref{item:cont_family_contStruct} applied with $G=\Z$, either there are at most $\epsilon^2 |A_1||A_2|$ pairs $(a_1,a_2)\in A_1\times A_2$ with $a_1+a_2\not\in B$, and hence Corollary~\ref{cor:supersat} together with $|B|\leq m$ gives the required upper bound on $|A_1|+|A_2|$, or $\max\{|A_1|,|A_2|\} < m/\log n$.
Suppose neither \ref{item:container_small} nor \ref{item:container_max_large} hold, then by Corollary~\ref{cor:relativestability}\ref{item:relativestab_closetoap} we see that \ref{item:container_close_to_ap} holds.

We will now count the number of pairs of sets $X_1,X_2$ of size $s_1$ and $s_2$ respectively, satisfying $|X_1+X_2|\leq m$ that do not have large intersections with arithmetic progressions in the sense of the theorem.
To do this, recall that by Theorem~\ref{thm:containerfamily}\ref{item:cont_family_indCont}, for any such pair, there exists a container triple $(A_1,A_2,B)\in\calA$ such that $X_i\subset A_i$.

We begin by giving an upper bound on the number of $X_1,X_2$ with containers satisfying property~\ref{item:container_small}, that is, $|A_1|+|A_2| \leq (1-\epsilon)m$.
In fact we will give an upper bound for all such pairs, irrespective of whether they violate the claimed structural statement.
Clearly there are at most $\sum_{\calA}^{(a)} \binom{|A_1|}{s_1}\binom{|A_2|}{s_2}$ of these.
By comparing $\binom{a-b}{c}\binom{b}{d}$ and $\binom{a-b-1}{c}\binom{b+1}{d}$ it is easy to check that an expression of this form has its maximum at $\binom{ca/(c+d)}{c}\binom{da/(c+d)}{d}$. 
So choosing ${\epsilon = 2^8 m^{1/6}(s_1+s_2)^{-1/3}\sqrt{\log n}< 2^{-10}\alpha^2 s_1^2(s_1+s_2)^{-2}}$ and using \eqref{eq:container_family_size}, we see that
\begin{equation}\label{eq:type_a_sets}
\begin{split}
\sum_{\calA}^{(a)}\binom{|A_1|}{s_1}\binom{|A_2|}{s_2} &\leq |\calA| \binom{(1-\epsilon)\frac{s_1m}{s_1+s_2}}{s_1}\binom{(1-\epsilon)\frac{s_2m}{s_1+s_2}}{s_2}\\
&\leq \exp(2^{21} \sqrt{m} \epsilon^{-2}(\log n)^{3/2} - \epsilon (s_1+s_2))\binom{\frac{s_1m}{s_1+s_2}}{s_1}\binom{\frac{s_2m}{s_1+s_2}}{s_2}\\
&\leq \exp(-2^7m^{1/6}(s_1+s_2)^{2/3}\sqrt{\log n})\binom{\frac{s_1m}{s_1+s_2}}{s_1}\binom{\frac{s_2m}{s_1+s_2}}{s_2}.
\end{split}
\end{equation}
We will now count pairs coming from containers of type~\ref{item:container_max_large}.
As in the last case we will count all pairs, not such those violating the theorem statement.
We will not make use of the fact that $s_2\geq s_1$ so suppose without loss of generality that \ref{item:container_max_large} holds for $i=2$.
Similar to the previous case, it suffices to give an upper bound for \[\sum_{\calA}^{(b)}\binom{|A_1|}{s_1}\binom{|A_2|}{s_2} \leq \sum_{\calA}^{(b)}\binom{\left(\frac{s_1}{s_1+s_2}+2\epsilon-2\sqrt{\epsilon}\right)m}{s_1} \binom{\left(\frac{s_2}{s_1+s_2}+2\sqrt{\epsilon}\right)m}{s_2}.\]
Noting that $\epsilon = 2^8m^{1/6}(s_1+s_2)^{-1/3}\sqrt{\log n} > 2^{10}s_1^2(s_1+s_2)^{-2}m^{-2}$ we can apply Lemma~\ref{lem:binom_prod_bound} and see that \[\binom{\left(\frac{s_1}{s_1+s_2}+2\epsilon-2\sqrt{\epsilon}\right)m}{s_1} \binom{\left(\frac{s_2}{s_1+s_2}+2\sqrt{\epsilon}\right)m}{s_2} \leq e^{-\epsilon(s_1+s_2)}\binom{\frac{s_1m}{s_1+s_2}}{s_1}\binom{\frac{s_2m}{s_1+s_2}}{s_2},\] and hence 
\begin{equation}\label{eq:type_b_sets}
\sum_{\calA}^{(b)}\binom{|A_1|}{s_1}\binom{|A_2|}{s_2} \leq \exp(-2^{7}m^{1/6}(s_1+s_2)^{2/3}\sqrt{\log n})\binom{\frac{s_1m}{s_1+s_2}}{s_1}\binom{\frac{s_2m}{s_1+s_2}}{s_2}.
\end{equation}
Finally, it remains to count the relevant $X_1,X_2$ with containers satisfying property~\ref{item:container_close_to_ap}.
Observe that there are at most 
\begin{equation}\label{eq:type_c_sets_1}
\sum_{i=1}^2\sum_{s_i'=8\alpha^{-1} \epsilon(s_1+s_2)}^{s_i}\binom{|A_i|}{s_i-s_i'}\binom{\epsilon|A_i|}{s_i'}\binom{|A_{3-i}|}{s_{3-i}}
\end{equation}
pairs of sets $X_i\subset A_i$ with $|X_i|=s_i$ that violate the theorem statement, since for at least one $\delta\in\{1,2\}$ there must be at least $s_\delta'$ elements in $T_\delta$ for some $s_\delta'\geq 8\alpha^{-1} \epsilon (s_1+s_2)$.
Indeed, otherwise $X_i \setminus T_i \subset P_i$ with $|P_i| \leq \frac{s_im}{s_1+s_2}+4\sqrt{\epsilon}m$ and $|X_i \cap T_i| \leq 8\alpha^{-1} \epsilon (s_1+s_2)$ for both $i$.
For any $d\leq c\leq a/4$, it holds (see for instance~\cite{C2019}) that \[\binom{a}{c-d}\binom{b}{d}\leq \binom{a}{c}\left(\frac{4bc}{ad}\right)^d,\] so applying this to each innermost summand of \eqref{eq:type_c_sets_1} gives
\begin{align*}
\binom{|A_i|}{s_i-s_i'}\binom{\epsilon|A_i|}{s_i'}\binom{|A_{3-i}|}{s_{3-i}} &\leq \left(\frac{4\epsilon s_i}{s_i'}\right)^{s_i'}\binom{|A_1|}{s_1}\binom{|A_2|}{s_2} \\
&\leq \left(\frac{4\epsilon s_i}{s_i'}\right)^{s_i'}\binom{(1+2\epsilon)\frac{s_1m}{s_1+s_2}}{s_1}\binom{(1+2\epsilon)\frac{s_2m}{s_1+s_2}}{s_2}\\
&\leq \left(\frac{4\epsilon s_i}{s_i'}\right)^{s_i'}\left(1+4\alpha^{-1}\epsilon\right)^{s_1+s_2}\binom{\frac{s_1m}{s_1+s_2}}{s_1}\binom{\frac{s_2m}{s_1+s_2}}{s_2}
\end{align*} 
for every $i\in\{1,2\}$ and $s_i\geq s_i'\geq 8\alpha^{-1}\epsilon(s_1+s_2)$.
Here, for the last inequality we used the bound $\binom{a}{c}\leq \left(\frac{a-c}{b-c}\right)^c \binom{b}{c}$ valid for any $a\geq b\geq c\geq 0$, as well as the upper bound $\alpha\leq 1$.
Note that, by our choice of $\epsilon$, we have $\max\{|\calA|,s_1+s_2\} \leq \exp(\epsilon(s_1+s_2))$, hence summing \eqref{eq:type_c_sets_1} over all triples $(A_1,A_2,B)\in \calA$ we obtain
\begin{equation}\label{eq:type_c_sets_2}
\begin{split}
&\sum_{\calA}^{(c)} \sum_{i=1}^2\sum_{s_i'=8\alpha^{-1} \epsilon(s_1+s_2)}^{s_i}\binom{|A_i|}{s_i-s_i'}\binom{\epsilon|A_i|}{s_i'}\binom{|A_{3-i}|}{s_{3-i}}\\
\leq{} &|\calA|(1+4\alpha^{-1}\epsilon)^{s_1+s_2}\binom{\frac{s_1m}{s_1+s_2}}{s_1}\binom{\frac{s_2m}{s_1+s_2}}{s_2}\sum_{i=1}^2 s_i \max_{s_i'\geq 8\alpha^{-1} \epsilon(s_1+s_2)}  \left(\frac{4\epsilon s_i}{s_i'}\right)^{s_i'}\\
\leq{} &\exp\left(6\alpha^{-1}\epsilon (s_1+s_2)\right)2^{-16\alpha^{-1}\epsilon(s_1+s_2)}\binom{\frac{s_1m}{s_1+s_2}}{s_1}\binom{\frac{s_2m}{s_1+s_2}}{s_2}\\
\leq{} &\exp(-4\alpha^{-1}\epsilon(s_1+s_2))\binom{\frac{s_1m}{s_1+s_2}}{s_1}\binom{\frac{s_2m}{s_1+s_2}}{s_2}\\
={} &\exp(-2^{10}m^{1/6}(s_1+s_2)^{2/3}\sqrt{\log n})\binom{\frac{s_1m}{s_1+s_2}}{s_1}\binom{\frac{s_2m}{s_1+s_2}}{s_2}.
\end{split}
\end{equation}

To conclude, note that bounds \eqref{eq:type_a_sets}, \eqref{eq:type_b_sets} and \eqref{eq:type_c_sets_2} imply the probability we claimed in the statement since we can fix a single pair of disjoint arithmetic progressions of length $\frac{s_im}{s_1+s_2}$ respectively with the same common difference and see that any of the $\prod\binom{s_im/(s_1+s_2)}{s_i}$ pairs of $s_i$-subsets will have a sumset of size at most $m$.
\end{proof}

Note that in the proof above, the main restriction concerning the relation between $s_1$ and $s_2$ comes from the application of Theorem~\ref{prop:freiman3k4robust}, our stability result.
Since counting statements using the container framework only require the use of supersaturation and not stability, we thus get the following less restrictive result which is the technical version of Theorem~\ref{thm:easymain_counting}.

\begin{thm}\label{thm:counting_abeliangroups}
Let $G$ be an abelian group.
Let $s_1,s_2,n$ be integers satisfying \[s_2 \geq s_1 \geq \max\left(\sqrt{(s_1+s_2)\log n}, 2^{48}(\log n)^3-s_2\right),\] and let $m$ be an integer such that \[s_1+s_2\leq m\leq \min\left(\frac{s_1^2}{\log n}, \frac{(s_1+s_2)^2}{2^{48}(\log n)^3}\right).\]
Then for any $F_1,F_2\subset G$ with $|F_i|=n$, it holds that the number of pairs of sets $(X_1,X_2)\in 2^{F_1}\times 2^{F_2}$ with $|X_i|=s_i$ and $|X_1+X_2|\leq m$ is at most \[\exp\left(2^{10}m^{1/6}(s_1+s_2)^{2/3}\lambda^{2/3}\sqrt{\log n}\right)\binom{s_1(m+\beta)/(s_1+s_2)}{s_1}\binom{s_2(m+\beta)/(s_1+s_2)}{s_2},\] where $\lambda = \min\left(\frac{m}{m-s_1-s_2},\log(s_1+s_2)\right)$ and $\beta = \beta(m+2^{8}m^{7/6}(s_1+s_2)^{-1/3}\lambda^{-1/3}\sqrt{\log n})$.
\end{thm}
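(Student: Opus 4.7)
The approach is to combine the container family from Theorem~\ref{thm:containerfamily} with the supersaturation statement of Corollary~\ref{cor:supersat}. Unlike the structural theorem, only supersaturation is needed here, which is why the argument extends cleanly to arbitrary abelian groups. First, I fix a parameter $\epsilon > 0$ to be optimized at the end and apply Theorem~\ref{thm:containerfamily} to obtain a family $\calA$ of triples $(A_1, A_2, B)$ with $|\calA| \le \exp(2^{20}\epsilon^{-2}\sqrt{m}(\log n)^{3/2})$, such that every counted pair $(X_1, X_2)$ embeds in some container, and each container either satisfies $\max\{|A_1|, |A_2|\} \le m/\log n$ or has at most $\epsilon^2 |A_1||A_2|$ pairs $(a_1, a_2) \in A_1 \times A_2$ with $a_1 + a_2 \notin B$. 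The contrapositive of Corollary~\ref{cor:supersat} in the latter case yields $|A_1| + |A_2| \le (1+2\epsilon)(|B| + \beta') \le (1+2\epsilon)(m + \beta')$, where $\beta' = \beta((1+4\epsilon)m)$.

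The total count is then bounded by $\sum_{(A_1, A_2, B) \in \calA} \binom{|A_1|}{s_1} \binom{|A_2|}{s_2}$. The "small-max" case is either vacuous (when $s_2 > m/\log n$, so no $X_2$ of the right size can fit inside $A_2$) or negligible under the theorem's hypotheses, by a crude comparison of $\binom{m/\log n}{s_i}$ with the target binomials. For the "large-max" case, a standard swap argument maximizes $\binom{|A_1|}{s_1}\binom{|A_2|}{s_2}$ subject to $|A_1| + |A_2| \le M := (1+2\epsilon)(m + \beta')$ at $|A_i| = s_i M/(s_1+s_2)$, so this contribution is at most $|\calA| \cdot \binom{s_1 M/(s_1+s_2)}{s_1}\binom{s_2 M/(s_1+s_2)}{s_2}$.

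It remains to compare $\binom{s_i M/(s_1+s_2)}{s_i}$ with the target $\binom{s_i(m+\beta')/(s_1+s_2)}{s_i}$. Writing their ratio as $\prod_{j=0}^{s_i-1}(1 + d_i/(n_i - j))$ with $n_i = s_i(m+\beta')/(s_1+s_2)$ and $d_i = 2\epsilon n_i$, two complementary bounds are available: bounding each factor by its maximum (attained at $j = s_i-1$) yields the factor $\exp(O(\epsilon s_i m/(m-s_1-s_2)))$, while using $\log(1+x)\le x$ inside the product together with $\sum_{j=0}^{s_i-1} 1/(n_i-j) \le \log(n_i/(n_i-s_i)) + O(1)$ yields $\exp(O(\epsilon n_i \log((m+\beta')/(m+\beta'-s_1-s_2))))$, which under the upper bound $m \le s_1^2/\log n$ (so $m+\beta'$ is polynomial in $s_1+s_2$) is at most $\exp(O(\epsilon(s_1+s_2)\log(s_1+s_2)))$. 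Selecting the smaller of the two bounds yields the combined ratio factor $\exp(O(\epsilon(s_1+s_2)\lambda))$.

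Finally, optimizing $\epsilon$ by balancing $\exp(2^{20}\epsilon^{-2}\sqrt{m}(\log n)^{3/2})$ against $\exp(O(\epsilon(s_1+s_2)\lambda))$ gives $\epsilon \sim (m^{1/2}(\log n)^{3/2}/((s_1+s_2)\lambda))^{1/3}$, and the exponents combine to give the claimed bound $\exp(2^{10}m^{1/6}(s_1+s_2)^{2/3}\lambda^{2/3}\sqrt{\log n})$; the hypothesis $s_1+s_2 \ge 2^{48}(\log n)^3$ is exactly what is needed to ensure $\epsilon < 1/2$ so that Corollary~\ref{cor:supersat} applies. The main obstacle is the dual binomial-ratio estimate producing the $\lambda$ factor; in particular the $\log(s_1+s_2)$ branch emerges from the summation-based bound rather than the swap-based bound, and verifying that the auxiliary quantity $\beta'$ is compatible with the $\beta$ appearing in the final statement requires checking that the self-referential definition $\beta = \beta(m + 2^{8}m^{7/6}(s_1+s_2)^{-1/3}\lambda^{-1/3}\sqrt{\log n})$ can be absorbed into the looser $\beta((1+4\epsilon)m)$ arising from supersaturation for the above choice of $\epsilon$.
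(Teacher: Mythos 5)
Your proposal is correct and follows essentially the same route as the paper's proof: apply Theorem~\ref{thm:containerfamily}, invoke Corollary~\ref{cor:supersat} (via the dichotomy in part~\ref{item:cont_family_contStruct}) to force $|A_1|+|A_2|\leq (1+2\epsilon)(m+\beta)$, bound the count by $|\calA|\max\binom{|A_1|}{s_1}\binom{|A_2|}{s_2}$, and compare the resulting binomials against the target ones using two complementary ratio estimates whose minimum produces the $\lambda$ factor, then balance $\epsilon$. One cosmetic difference: the paper absorbs the $\max|A_i|<m/\log n$ case directly into the inequality $|A_1|+|A_2|\leq (1+2\epsilon)(m+\beta)$ rather than treating it as a separate vacuous/negligible case, which is slightly cleaner. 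One small imprecision in your sketch worth noting: your ``summation-based'' intermediate estimate $\sum_{j<s_i}1/(n_i-j)\leq\log(n_i/(n_i-s_i))+O(1)$ degenerates when $n_i-s_i<1$; the paper's corresponding bound $\delta a\log a$ (derived from the same sum) has to be read with the implicit proviso $a-b\geq 1$, and the correct uniform bound is via the harmonic number $\sum_{k=a-b+1}^{a}k^{-1}=O(\log a)$, which costs only a constant. You correctly identified that with the paper's choice $\epsilon = 2^6 m^{1/6}(s_1+s_2)^{-1/3}\lambda^{-1/3}\sqrt{\log n}$ the quantity $(1+4\epsilon)m$ matches the argument of $\beta$ in the statement, and that the hypothesis $s_1+s_2\geq 2^{48}(\log n)^3$ is what ensures $\epsilon<1/4$.
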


\begin{proof}
We can apply Theorem~\ref{thm:containerfamily} with $s_1,s_2,m,n$ and $1/4>\epsilon>0$ to be specified later, let $\calA$ be the family obtained this way.
So for every pair of sets $(X_1,X_2)\in 2^{F_1}\times 2^{F_2}$ there exists a container triple $(A_1,A_2,B)\in\calA$ such that $X_i\subset A_1$ and $B\subset X_1+X_2$.
Note that if we define $\beta=\beta(m+4\epsilon m)$, it holds true that $|A_1|+|A_2|\leq (1+2\epsilon)(m+\beta)$ for any pair $(A_1,A_2)$ appearing in a container triple in $\calA$.
Indeed, by Theorem~\ref{thm:containerfamily}\ref{item:cont_family_contStruct}, we either have \[|A_1|+|A_2|\leq 2\max|A_i| < 2m/\log n \leq (1+2\epsilon)(m+\beta),\] or there are at most $\epsilon^2 |A_1||A_2|$ pairs $(a_1,a_2)$ such that $a_1+a_2\notin B$, and hence Corollary~\ref{cor:supersat} gives the required bound.
Hence the number of pairs $(X_1,X_2)$ satisfying the theorem hypotheses is at most
\begin{align}\label{eq:counting_bound}
|\calA|\max_{(A_1,A_2,B)\in\calA}\binom{|A_1|}{s_1}\binom{|A_2|}{s_2} &\leq \exp\left(\frac{2^{21}\sqrt{m(\log n)^3}}{\epsilon^{2}}\right)\binom{\frac{s_1(1+2\epsilon)(m+\beta)}{s_1+s_2}}{s_1}\binom{\frac{s_2(1+2\epsilon)(m+\beta)}{s_1+s_2}}{s_2}.
\end{align}
If $m/(m-s_1-s_2)\leq \log(s_1+s_2)$ we can again apply the bound $\binom{a}{c}\leq \left(\frac{a-c}{b-c}\right)^c \binom{b}{c}$ valid for any $a\geq b\geq c\geq 0$ to both binomials in~\eqref{eq:counting_bound} separately and see that it is at most \[\exp\left(2^{21}\epsilon^{-2}\sqrt{m}(\log n)^{3/2}+2\epsilon\lambda(s_1+s_2)\right)\binom{\frac{s_1(m+\beta)}{s_1+s_2}}{s_1}\binom{\frac{s_2(m+\beta)}{s_1+s_2}}{s_2}.\]
Suppose now that $m/(m-s_1-s_2)\geq \log(s_1+s_2)$, and note that this implies in particular $m=s_1+s_2+o(1)$.
We compute 
\begin{align*}
\log\left(\binom{(1+\delta)a}{b}\binom{a}{b}^{-1}\right) &= \sum_{i=0}^{b-1}\log\left(1+\frac{\delta a}{a-i}\right)\\
&\leq \delta a\int_0^{b}(a-x)^{-1}dx \\
&\leq \delta a \log a.
\end{align*}
Applying this with $\delta=2\epsilon$, $a=s_i (m+\beta)/(s_1+s_2)$ and $b=s_i$ and noting that $\beta\leq 2m$, we can upper bound~\eqref{eq:counting_bound} by \[\exp\left(2^{21}\epsilon^{-2}\sqrt{m}(\log n)^{3/2}+2^3\epsilon\lambda(s_1+s_2)\right)\binom{\frac{s_1(m+\beta)}{s_1+s_2}}{s_1}\binom{\frac{s_2(m+\beta)}{s_1+s_2}}{s_2}.\]
Hence setting $\epsilon = 2^6 m^{1/6}(s_1+s_2)^{-1/3}\lambda^{-1/3}\sqrt{\log n}< 1/4$ implies \[\beta(m+4\epsilon m) = \beta(m+2^{8}m^{7/6}(s_1+s_2)^{-1/3}\lambda^{-1/3}\sqrt{\log n})\] and the number of pairs $(X_1,X_2)$ satisfying the theorem hypotheses is at most \[\exp\left(2^{10}m^{1/6}(s_1+s_2)^{2/3}\lambda^{2/3}\sqrt{\log n}\right)\binom{\frac{s_1(m+\beta)}{s_1+s_2}}{s_1}\binom{\frac{s_2(m+\beta)}{s_1+s_2}}{s_2}.\]
\end{proof}

We note that Theorem~\ref{thm:easymain_counting} is obtained via a straightforward application of Theorem~\ref{thm:counting_abeliangroups}.
Finally, similar to the proof of Theorem~\ref{thm:easymain_structure}, when a result comparable to Freiman's $3k-4$ exists one can handle the case $m/(m-s_1-s_2)\geq\log(s_1+s_2)$ differently than it was done in the proof of Theorem~\ref{thm:counting_abeliangroups}.
Essentially, this case can only happen when $m$ is very close to $s_1+s_2$ and hence such a Freiman type result would actually tell us the precise structure instead of just the number of such pairs.

\section{Proof of Theorem~\ref{thm:container}}\label{sec:containerlemmaproof}

\subsection{Setup}
From now on all hypergraphs are allowed to have multi-edges, and the edges are counted with multiplicity.
Let $r,r_0\in \N$, $m\in \N$ and let $R$ be a positive real. Let $b$ be positive integer and suppose that $\calH$ is a $(1,\dots,1,r_0)$-bounded $r$-partite hypergraph with vertex set $V=V_1\cup\dots\cup V_r$ satisfying~\eqref{deg cond} for each vector $y\in \left(\prod_{i=1}^{r-1}\{0,1\}\right)\times \{0,1,\ldots,r_0\}$, $b\leq \min_i |V_i|$ and $b\leq m$ as in the statement of Theorem~\ref{thm:container}.
We also define the vector $w=(w_1,\dots,w_r)=(|V_1|,\dots,|V_{r-1}|,m)$ which will be helpful to reduce notational clutter later. 
Note that using $w$, one can write the degree condition~\eqref{deg cond} as \[\Delta_y(\calH) \leq R\left(\prod_{i=1}^{r}w_i^{y_i}\right)^{-1}b^{|y|-1}e(\calH)\left(\frac{m}{q}\right)^{\mathds{1}[y_r>0]}.\]
We claim that, without loss of generality we may assume that $m \le |V_r|$. Indeed, if $m > |V_r|$, then we may replace $m$ with $|V_r|$ as $\calI_m(\calH) \subseteq \calI_{m'}(\calH)$ for any  $m'\geq m$, and the right-hand side of~\eqref{deg cond} is a non-increasing function in $m$.
We shall be working only with hypergraphs with edge cardinalities coming from the set 
\[
  \calU := \big\{x \in \left(\prod_{i=1}^{r-1}\{0,1\}\right)\times \{1,2,\ldots,r_0\}:~x_i\leq x_{i+1}~\text{and}~ r_0x_{r-1}\leq x_r~\text{for}~ 1\leq i<r\big\}.
\]
The maximum codegrees we must check for each edge size $x\in\calU$ will come from the set
\[
  \calV(x) := \left(\prod_{i=1}^{r}\{0,\ldots,x_i\}\right)\setminus \{(0,\dots,0)\}.
\]
We now define a collection of numbers that will be upper bounds on the maximum codegrees of the hypergraphs constructed by our algorithm. 
To be more precise, for each $x \in \calU$ and all $v\in \calV(x)$, we shall force the maximum $v$-codegree of the $x$-bounded hypergraph not to exceed the quantity $\Del$, defined as follows.

\begin{defn}
  \label{dfn:Delta}
  For every $x \in \calU$ and every $v \in \calV(x)$, we define the number $\Del$ using the following recursion:
  \begin{enumerate}[label=(\arabic*)]
  \item 
    If $x=(1,\dots,1,r_0)$, set $\Del := \Delta_{v}(\calH)$ for all $v \in \calV(x)$.\smallskip
  \item
    Given $x\in \calU$, let $i'=\min \{i: x_i>0\}$ and $x-e_{i'}=x'\in\calU$ where $e_1,\dots,e_r$ are the standard basis vectors of $\R^r$.
    If $v\in \calV(x)$ satisfies $v_{i'}>0$, denote similarly $v-e_{i'}=:v'\in\calV(x')$.
    Note that in this case $i'$ depends on $x$, so $v'$ also depends on $x$, not only on $v$, but we omit it from the notation to avoid clutter.
    Then define
    \[
      \Delta_{v'}^{x'} := \max \left\{ 2 \Delta_{v}^{x}, \, \frac{b}{w_{i'}} \Delta_{v'}^{x} \right\}.
    \]
  \end{enumerate}
\end{defn}

The above recursive definition will be convenient in some parts of the analysis. In other parts, we shall require the following explicit formula for $\Del$, which one easily derives from Definition~\ref{dfn:Delta} using a straightforward induction on $r_0+r-1-|x|$.

\begin{obs}
  \label{obs:Delta}
  For all $x$ and $v$ as in Definition~\ref{dfn:Delta},
  \[
    \Del = \max \left\{ 2^{|z|} \prod_{i=1}^{r-1} \left(\frac{b}{|V_i|}\right)^{1-v_i-z_i} \hspace{-1.2pt} \left(\frac{b}{m}\right)^{r_0-v_r-z_r} \hspace{-1pt} \Delta_{v+z}(\calH) : z\in \left(\prod_{i=1}^{r-1}\{0,1-x_i\}\right)\times [0,r_0-x_r] \right\}.
  \]
\end{obs}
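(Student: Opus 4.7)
I would prove the identity by induction on $r_0 + r - 1 - |x|$, which measures the distance of $x$ from the root $(1,\ldots,1,r_0)$ of the recursion in Definition~\ref{dfn:Delta}. In the base case $|x| = r_0 + r - 1$, the index set $\prod_{i<r}\{0,1-x_i\} \times [0, r_0 - x_r]$ collapses to the singleton $\{0\}$, so the proposed max reduces to a single term that matches the base value from Definition~\ref{dfn:Delta}(1).

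For the inductive step, assume the formula holds for some $x \in \calU$ with $|x| > 0$ and consider $x' = x - e_{i'}$, where $i' = \min\{i : x_i > 0\}$. Fix $v' \in \calV(x')$ and set $v = v' + e_{i'}$; the recursion then reads
\[
    \Delta_{v'}^{x'} = \max\left\{2 \Delta_v^x, \, \frac{b}{w_{i'}} \Delta_{v'}^x\right\}.
\]
By the inductive hypothesis each of $\Delta_v^x$ and $\Delta_{v'}^x$ is itself a maximum over $z \in R_x := \prod_{i<r}\{0,1-x_i\} \times [0, r_0 - x_r]$, while the claimed formula for $\Delta_{v'}^{x'}$ is a maximum over the slightly larger set $R_{x'}$: if $i' < r$ then $R_{x'}$ enlarges the $i'$-th coordinate from $\{0\}$ to $\{0,1\}$, and if $i' = r$ then $R_{x'}$ extends the range of $z_r$ from $[0, r_0 - x_r]$ to $[0, r_0 - x_r + 1]$. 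I would partition the max over $R_{x'}$ according to whether $z_{i'}$ lies in the original range (the ``old'' values, forming a copy of $R_x$) or takes the newly allowed value. On the old values, an extra factor $b/w_{i'}$ appears because the exponent $1 - v'_{i'} - z_{i'}$ (respectively $r_0 - v'_r - z_r$) is one larger than its counterpart inside the formula for $\Delta_{v'}^x$, so this sub-maximum reproduces the $(b/w_{i'}) \Delta_{v'}^x$ branch; on the new value, the substitutions $z \mapsto z - e_{i'}$ and $v' = v - e_{i'}$ reindex the term into twice the corresponding term in the formula for $\Delta_v^x$, yielding the $2 \Delta_v^x$ branch.

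The only real obstacle will be careful bookkeeping of exponents under the substitution $(v',z) \leftrightarrow (v, z + e_{i'})$, together with verifying that the two cases $i' < r$ and $i' = r$ unify (using $w_r = m$ and $w_i = |V_i|$ for $i < r$, so that the roles of the $(b/|V_{i'}|)$ and $(b/m)$ factors are structurally identical). Since the relevant transformations are linear in the exponents and the max commutes with the positive rescalings involved, the identity should propagate cleanly from $x$ to $x'$, closing the induction and matching the authors' description of a ``straightforward induction.''
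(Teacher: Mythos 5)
Your overall plan---induction on $r_0+r-1-|x|$, peeling the newly allowed value of $z_{i'}$ off the rest of the index set---is the induction the authors are alluding to. However, as written, neither your base case nor your inductive step actually goes through with the exponents $1-v_i-z_i$ and $r_0-v_r-z_r$ that appear in the statement. For the base case, with $x=(1,\dots,1,r_0)$ and $z=0$ the displayed max collapses to $\prod_{i<r}(b/|V_i|)^{1-v_i}(b/m)^{r_0-v_r}\,\Delta_v(\calH)$, which equals the base value $\Delta_v(\calH)$ only when $v=(1,\dots,1,r_0)$; for any other $v\in\calV(x)$ there is a spurious factor $\prod_i(b/w_i)^{x_i-v_i}<1$, so ``reduces to a single term that matches the base value'' is not true as stated. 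For the inductive step, you assert that the exponent $1-v'_{i'}-z_{i'}$ in the formula for $\Delta^{x'}_{v'}$ is ``one larger than its counterpart inside the formula for $\Delta^{x}_{v'}$''---but these exponents depend only on $v'$ and $z$, not on $x$, so on the ``old'' values of $z$ they are identical between the two formulas and no factor $b/w_{i'}$ is produced. The inductive step therefore does not reproduce the $(b/w_{i'})\Delta^{x}_{v'}$ branch as you claim.

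Both issues vanish if the exponents are $(1-x_i)-z_i$ and $(r_0-x_r)-z_r$ instead: then the $z=0$ term of the base case is literally $\Delta_v(\calH)$, and when passing from $x$ to $x'=x-e_{i'}$ the exponent at position $i'$ genuinely increases by one on the old values (giving the $(b/w_{i'})\Delta^{x}_{v'}$ branch) while the new value of $z_{i'}$ re-indexes via $z\mapsto z-e_{i'}$ into the $2\Delta^{x}_{v}$ branch, which is exactly the decomposition you describe. This corrected formula agrees with the stated one precisely when $v=x$, and it still yields the bounds \eqref{eq:Delta01} and \eqref{eq:Delta10} used later (the discrepancy is a multiplicative factor $\prod_i(b/w_i)^{x_i-v_i}\le 1$, so those inequalities survive either way). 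In short: you chose the right approach, but you silently reasoned as if the exponents carried $x_i$, which they would need to for the induction to close; against the formula as literally written, the base case fails and the exponent-comparison argument is incoherent. A careful write-up should flag and correct this before the induction can be carried out.
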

For future reference, we note the following two simple corollaries of Observation~\ref{obs:Delta} and our assumptions on the maximum degrees of $\calH$, see~\eqref{deg cond}. Suppose that $x \in \calU$ such that $i\in[r]$ is the least index with $e_i\in \calV(x)$.
If $i<r$, then by definition of $\calU$ it holds that $x_j=0$ for all $1\leq j<i$, $x_j=1$ for all $i\leq j< r$ and $x_r=r_0$, so 
\begin{equation}
  \label{eq:Delta01}
  \begin{split}  
    \Delta_{e_i}^{x} & \le 2^{i}R \prod_{j=1}^{i-1} \left(\frac{b}{|V_j|}\right) \frac{e(\calH)}{|V_i|}.
  \end{split}  
\end{equation}
If $i=r$, then $x_j=0$ for all $1\leq j<r$ and
\begin{equation}
  \label{eq:Delta10}
  \begin{split}
    \Delta_{e_i}^{x} \le 2^{r+r_0}R \prod_{j=1}^{r-1} \left(\frac{b}{|V_j|}\right) \left(\frac{b}{m}\right)^{r_0-x_r} \frac{e(\calH)}{q}.
  \end{split}
\end{equation}
\par
We will build a sequence of hypergraphs with decreasing maximum edge size, starting with $\calH$, and making sure that for each hypergraph $\calG$ in the sequence we have an appropriate bound on its maximum codegrees. To this end we define the following set of pairs with large codegree.
\begin{defn}
  \label{dfn:MDel}
  Given $x \in \calU$, $v \in \calV(x)$, and an $x$-bounded hypergraph $\calG$, we define
  \[
    \MDel(\calG) = \left\{ L \in \prod_{i=1}^r \binom{V_i}{v_i} : d_\calG(L) \ge \Del/2 \right\}.
  \]
\end{defn}

\subsection{The algorithm}
\label{sec:algorithm}

We shall now define precisely a single round of the algorithm we use to prove the container lemma. To this end, fix some $x \in \calU$, set $i':=\min \{i: x_i>0\}$ and
\begin{equation}
  \label{eq:i-prime}
  x'=x-e_{i'}.
\end{equation}
Suppose that $\calG$ is an $x$-bounded hypergraph with $V(\calG) = V(\calH)$. A single round of the algorithm takes as input an arbitrary $I \in \calI(\calG)$ and outputs an $x'$-bounded hypergraph $\calG_*$ satisfying $V(\calG_*) = V(\calG)$ and $I \in \calI(\calG_*)$ as well as a~set $S\subseteq I\cap V_{i'}$ such that $|S|\leq b$. 
Crucially, the number of possible outputs of the algorithm (over all possible inputs $I \in \calI(\calG)$) is at most $\binom{|V_{i'}|}{\le b}$.

Assume that there is an implicit linear order $\preccurlyeq$ on $V(\calG)$. The \emph{$i'$-maximum vertex} of a hypergraph $\calA$ with $V(\calA) = V(\calG)$ is the $\preccurlyeq$-smallest vertex among all $v\in V_{i'}$ of maximal degree.

\medskip

\noindent
\textbf{The algorithm.}
Set $\calA^{(0)} = \calG$, $S=\emptyset$ and $\calG_*^{(0)}=(V(\calG),\emptyset)$. 
Do the following for each integer $j \ge 0$ in turn:
\begin{enumerate}[label={(S\arabic*)}]
\item
  \label{item:alg-stop}
  If $|S| = b$ or $\calA^{(j)}$ is empty, then set $L = j$ and \STOP.
\item
  \label{item:alg-c-maximum}
  Let $u_j\in V_{i'}$ be the $i'$-maximum vertex of $\calA^{(j)}$.
\item
  \label{item:alg-main-step}
  If $u_j\in I$, then add $j$ to the set $S$ and let
  \[
    \calG_*^{(j+1)} := \calG_*^{(j)} \cup \Big\{ E\setminus\{u_j\} : E \in \calA^{(j)} \text{ and } u_j \in E \Big\}.
  \]
\item
  \label{item:alg-cleanup}
  Let $\calA^{(j+1)}$ be the hypergraph obtained from $\calA^{(j)}$ by removing from it all edges $E$ such that either of the following hold:
  \begin{enumerate}[label={(\alph*)}]
  \item
    \label{item:cleanup-1}
    $u_j \in E$,
  \item
    \label{item:cleanup-2}
    there exists a non-empty $T\subseteq E$, such that 
    \[
      T \in M^{x'}_v\big( \calG_*^{(j+1)} \big)
    \]
 for some $v\in \mathcal{V}(x')$.
  \end{enumerate}
\end{enumerate}
Finally, set $\calA := \calA^{(L)}$ and $\calG_* := \calG_*^{(L)}$. Moreover, set
\[
  W := \big\{ 0, \dotsc, L-1 \big\} \setminus S = \Big\{ j \in \big\{ 0, \dotsc, L-1 \big\} : u_j\not \in I\Big\}.
\]

\smallskip

Observe that the algorithm always stops after at most $|V(\calG)|$ iterations of the main loop. 
Indeed, since all hyperedges $E$ with $u_j \in E$ are removed from $\calA^{(j+1)}$ in part~\ref{item:cleanup-1} of step~\ref{item:alg-cleanup}, the vertex $u_j$ cannot be the $i'$-maximum vertex of any $\calA^{(j')}$ with $j' > j$ and hence the map $\{0, \dotsc, L-1\} \ni j \mapsto u_j \in V(\calG)$ is injective. 

\subsection{The analysis}

We shall now establish some basic properties of the algorithm described in the previous subsection. To this end, let us fix some $x \in \calU$, $x'$ and $i'$ as defined in~\eqref{eq:i-prime}. Moreover, suppose that $\calG$ is an $x$-bounded hypergraph and that we have run the algorithm with input $I \in \calI(\calG)$ and obtained the $x'$-bounded hypergraph $\calG_*$, the integer $L$, the injective map $\{0, \dotsc, L-1\} \ni j \mapsto u_j \in V(\calG)$, and the partition of $\{0, \dotsc, L-1\}$ into $S$ and $W$ such that $u_j\in I$ if and only if $j \in S$. We first state two straightforward, but fundamental, properties of the algorithm.

\begin{obs}
  \label{obs:h-in-FF-Gn}
  If $I \in \calI(\calG)$, then $I \in \calI(\calG_*)$.
\end{obs}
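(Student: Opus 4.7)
The plan is to unpack directly what an edge of $\calG_*$ looks like and trace through the algorithm. The only step that ever adds hyperedges to $\calG_*$ is step~\ref{item:alg-main-step}, which is executed precisely when $j \in S$, i.e., when $u_j \in I$. At such a step, every hyperedge of the form $E \setminus \{u_j\}$ with $E \in \calA^{(j)}$ and $u_j \in E$ is added to $\calG_*^{(j+1)}$. Since $\calA^{(j)} \subseteq \calG$ (as $\calA^{(j)}$ is obtained from $\calG = \calA^{(0)}$ purely by edge deletions in step~\ref{item:alg-cleanup}), every edge $E^* \in \calG_*$ arises from some $E \in \calG$ and some $j \in S$ via $E^* = E \setminus \{u_j\}$ with $u_j \in I$.

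Given this, the argument is a one-liner. Suppose for contradiction that $E^* \subseteq I$ for some $E^* \in \calG_*$. Write $E^* = E \setminus \{u_j\}$ as above; since $u_j \in I$, we get $E = E^* \cup \{u_j\} \subseteq I$, contradicting $I \in \calI(\calG)$. Hence no edge of $\calG_*$ lies in $I$, which is exactly $I \in \calI(\calG_*)$.

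There is no real obstacle here: the only things to verify are the two bookkeeping claims that (a) $\calA^{(j)}$ remains a subhypergraph of $\calG$ throughout the run, which follows because step~\ref{item:alg-cleanup} only removes edges, and (b) every hyperedge of $\calG_*$ is produced at some step $j \in S$, which is immediate from the definition of $\calG_*^{(j+1)}$. Both are evident from the algorithm's description, so the proof amounts to writing out these observations cleanly.
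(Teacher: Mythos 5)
Your proof is correct and follows the same approach as the paper: both arguments note that every edge of $\calG_*$ has the form $E \setminus \{v\}$ for some $E \in \calG$ and $v \in E \cap I$, and then observe that if such an edge lay in $I$, the parent edge $E$ would too. You simply spell out the bookkeeping ($\calA^{(j)} \subseteq \calG$, each edge added at some $j \in S$) a bit more explicitly than the paper does.
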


\begin{proof}
  Observe that $\calG_*$ contains only edges of the form $E\setminus \{v\}$ where $v\in E\cap I$ and $E \in \calG$, see~\ref{item:alg-main-step}. Hence, if $I$ contained the edge $E\setminus \{v\}$ it would also contain the edge $E$.
\end{proof}

The next observation says that if the algorithm applied to two sets $I$ and $I'$ outputs the same set $\{u_j : j \in S\}$, then the rest of the output is also the same. 

\begin{obs}
  \label{obs:number-of-containers}
Fix the hypergraph $\calG$ we input in the algorithm, suppose that the algorithm applied to $I' \in \calF(\calG)$ outputs a hypergraph~$\calG_*'$, an integer $L'$, a map $j \mapsto u_j'$, and a partition of $\{0, \dotsc, L'-1\}$ into $S'$ and $W'$. If $\{u_j : j \in S\} = \{u_j' : j \in S'\}$, then $\calG_* = \calG_*'$, $L = L'$, $u_j = u_j'$ for all $j$, and $W = W'$.
\end{obs}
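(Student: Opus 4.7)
The plan is to prove the observation by induction on the step index $j$, establishing that the two runs of the algorithm on $I$ and on $I'$ produce identical internal state at every step under the hypothesis $T := \{u_j : j \in S\} = \{u_j' : j \in S'\} =: T'$. Precisely, I would maintain as invariants for every $j$ with $0 \leq j \leq \min(L, L')$ that $\calA^{(j)} = \calA'^{(j)}$, that $\calG_*^{(j)} = \calG_*'^{(j)}$, and that $u_k = u_k'$ together with the agreement $[k \in S] = [k \in S']$ for every $k < j$. The base case $j = 0$ is built into the initialization: $\calA^{(0)} = \calG = \calA'^{(0)}$ and $\calG_*^{(0)} = (V(\calG), \emptyset) = \calG_*'^{(0)}$.

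For the inductive step, the selection of the $i'$-maximum vertex $u_j$ in step~\ref{item:alg-c-maximum} is a deterministic function of $\calA^{(j)}$ and of the fixed linear order $\preccurlyeq$, so the inductive hypothesis $\calA^{(j)} = \calA'^{(j)}$ forces $u_j = u_j'$. To decide whether $j$ enters $S$, I invoke the injectivity of the map $k \mapsto u_k$ that was already noted after the algorithm's description: injectivity gives the equivalence $j \in S \Leftrightarrow u_j \in T$, and likewise $j \in S' \Leftrightarrow u_j' \in T'$. Since $u_j = u_j'$ and $T = T'$ by hypothesis, both Booleans coincide. Consequently the update in step~\ref{item:alg-main-step} behaves identically in the two runs, producing $\calG_*^{(j+1)} = \calG_*'^{(j+1)}$; and the cleanup in step~\ref{item:alg-cleanup}, being a deterministic function of $u_j$ and $\calG_*^{(j+1)}$ alone, yields $\calA^{(j+1)} = \calA'^{(j+1)}$.

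It remains to close out the global equalities. The termination test in step~\ref{item:alg-stop} depends only on $|S \cap \{0, \dots, j-1\}|$ and on whether $\calA^{(j)}$ is empty, both of which the invariants force to match between the two runs at every $j$. Hence both algorithms halt at the same index, giving $L = L'$, and substituting $j = L$ into the invariants yields $\calG_* = \calG_*'$, $u_k = u_k'$ for every $k < L$, and $S = S'$ as subsets of $\{0, \dots, L-1\}$; since $W$ is the complementary subset, $W = W'$ follows. The argument is essentially bookkeeping: the only piece with any content is the injectivity step, which is what converts the unordered-set hypothesis $T = T'$ into the per-step Boolean $[k \in S]$ needed to drive the induction. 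I do not anticipate a substantive obstacle, but care is required to verify that every decision made by the algorithm at step $j$ is determined by data already covered by the invariants.
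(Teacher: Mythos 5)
Your proof is correct and takes essentially the same approach as the paper's one-paragraph argument—namely, that the only step of the algorithm depending on $I$ is \ref{item:alg-main-step}, and that the per-step decision there can be recovered from the set $\{u_j:j\in S\}$ via the injectivity of $j\mapsto u_j$. You have simply made the implicit induction explicit, which is a matter of bookkeeping rather than a genuinely different route.
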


\begin{proof}
  The only step of the algorithm that depends on the input pair $I$ is~\ref{item:alg-main-step}. There, an index $j$ is added to the set $S$ if and only if $u_j\in I$. Therefore, the execution of the algorithm depends only on the set $\{u_j : j \in S\}$ and the hypergraph $\calG$.
\end{proof}

The next two lemmas will allow us to maintain suitable upper and lower bounds on the degrees and densities of the hypergraphs obtained by applying the algorithm iteratively. The first lemma, which is the easier of the two, states that if all the maximum degrees of $\calG$ are appropriately bounded, then all the maximum degrees of $\calG_*$ are also appropriately bounded.

\begin{lem}\label{lemma:alg-analysis-degrees}
Given $v \in \calV(x)$ with $v_{i'} > 0$, let $v'=v-e_{i'}$. If $\Delta_{v}(\calG) \le \Del$, then $\Delta_{v'}(\calG_*) \le \Delp$.
\end{lem}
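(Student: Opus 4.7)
The plan is to fix a set $L'\subseteq V(\calG)$ with $|L'\cap V_i|=v'_i$ for each $i$ and track the codegree $d_{\calG_*^{(j)}}(L')$ as $j$ runs through the iterations, aiming to show it never exceeds $\Delp$. Write $v = v' + e_{i'}$, and note that for any $u\in V_{i'}\setminus L'$, the set $L'\cup\{u\}$ is of type $v$. A per-step growth bound comes from observing that any edge in $\calG_*^{(j+1)}\setminus \calG_*^{(j)}$ is added in step~\ref{item:alg-main-step} and thus has the form $E\setminus\{u_j\}$ for some $E\in\calA^{(j)}$ containing $u_j$; such a new edge contains $L'$ only when $u_j\in V_{i'}\setminus L'$ and $L'\cup\{u_j\}\subseteq E$. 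Consequently, step $j$ adds at most
\[d_{\calA^{(j)}}(L'\cup\{u_j\})\le d_\calG(L'\cup\{u_j\})\le \Delta_v(\calG)\le \Del\]
edges containing $L'$.

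Next I would prove a blocking principle: the moment $d_{\calG_*^{(j+1)}}(L')\ge \Delp/2$, no edge containing $L'$ is ever added again. At that moment $L'\in M^{x'}_{v'}(\calG_*^{(j+1)})$, so step~\ref{item:alg-cleanup}\ref{item:cleanup-2} of iteration $j$ removes from $\calA^{(j)}$ every edge containing $L'$; hence $\calA^{(j+1)}$, and by monotonicity each later $\calA^{(j'')}$, contains no edge with $L'$ as a subset, and in particular none with $L'\cup\{u_{j''}\}$ as a subset, so nothing further is contributed.

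Combining the two ingredients, let $j^*$ be the last iteration at which an edge containing $L'$ is added to $\calG_*$ (the claim is trivial if no such $j^*$ exists). The blocking principle gives $d_{\calG_*^{(j^*)}}(L')<\Delp/2$, and the per-step bound contributes at most $\Del$ further edges containing $L'$ at step $j^*$, so
\[d_{\calG_*}(L') < \frac{\Delp}{2} + \Del \le \Delp,\]
where the final inequality is exactly the clause $\Delp\ge 2\Del$ built into Definition~\ref{dfn:Delta}.

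The main subtlety I expect to verify is the borderline case $v=e_{i'}$, where $v'=\mathbf{0}\notin\calV(x')$ and the cleanup rule has nothing to key on. Here the lemma reduces to bounding $e(\calG_*)$, which follows from $|S|\le b$ (enforced by the stopping rule~\ref{item:alg-stop}) together with the per-step bound $\Delta_{e_{i'}}(\calG)\le \Delta^x_{e_{i'}}$, compared against the value of $\Delta_{\mathbf{0}}^{x'}$ read off from Observation~\ref{obs:Delta}. Away from this edge case the essential mechanism — a growing codegree in $\calG_*$ is immediately reflected as a thinning of the relevant supersets in $\calA$ — makes the rest of the proof a direct bookkeeping exercise.
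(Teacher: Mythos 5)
Your main argument is correct and is essentially the same as the paper's: you track $d_{\calG_*^{(j)}}(L')$, bound the per-step increase by $d_\calG(L'\cup\{u_j\})\le\Delta_v(\calG)\le\Del$, use the cleanup rule~\ref{item:alg-cleanup}\ref{item:cleanup-2} to argue that once the partial codegree reaches $\Delp/2$ no more edges through $L'$ can be contributed, and conclude via $\Delp/2+\Del\le\Delp$, which is exactly the clause $\Delp\ge 2\Del$ of Definition~\ref{dfn:Delta}. The paper phrases the threshold as ``the smallest $j$ with $\deg_{\calG_*^{(j+1)}}(T)>\Delp/2$'' rather than ``the last iteration contributing an edge through $L'$''; the two formulations are interchangeable and the rest of the bookkeeping is identical.

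On the borderline case $v=e_{i'}$, $v'=\mathbf{0}$: you are right that the paper's proof, as written, does not cover it, since $M^{x'}_{\mathbf 0}$ is not defined (Definition~\ref{dfn:MDel} requires $v\in\calV(x')$, and $\mathbf{0}\notin\calV(x')$) so the cleanup step never keys on $T=\emptyset$. But this case is vacuous rather than a gap: Definition~\ref{dfn:Delta} only defines $\Delta_{v'}^{x'}$ for $v'\in\calV(x')$, so for $v'=\mathbf 0$ the conclusion refers to an undefined quantity, and in the only place the lemma is used (the induction establishing property \emph{(ii)} in Section~\ref{sec:constr-container}) it is invoked precisely for $v'\in\calV(x')$, i.e.\ $v'\neq\mathbf 0$. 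So the lemma should be read with the implicit hypothesis $v'\in\calV(x')$. You should also be aware that the repair you sketch for this case would not go through anyway: from~\eqref{eq:e-Gn-S} you can show $e(\calG_*)\le b\,\Delta_{e_{i'}}^{x}$, but the natural extension of the recursion would only yield $\Delta_{\mathbf{0}}^{x'}\ge 2\Delta_{e_{i'}}^{x}$, which is too weak whenever $b>2$. Fortunately no fix is needed.
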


\begin{proof}
Suppose (for a contradiction) that there exists a set $T$, with $|T\cap V_i| = v'_i$ for all $i$, such that $\deg_{\calG_*}(T) > \Delp$. Let $j$ be the smallest integer satisfying
  \[
    \deg_{\calG_*^{(j+1)}}(T) > \Delp/2
  \]
  and note that $j \ge 0$, since $\calG_*^{(0)}$ is empty. We claim first that
   \begin{equation}
    \label{eq:deg-j-is-deg-final}
    \deg_{\calG_*} (T) = \deg_{\calG_*^{(j+1)}}(T).
  \end{equation}
  Indeed, observe that $T \in M^{x'}_{v'} \big( \calG_*^{(j+1)} \big)$, and therefore the algorithm removes from $\calA^{(j)}$ (when forming $\calA^{(j+1)}$ in step~\ref{item:alg-cleanup}) all edges $E$ such that $T\subset E$. As a consequence, no further edges $E$ with $T\subseteq E$ are added to $\calG_*$ in step~\ref{item:alg-main-step}.

  We next claim that
  \begin{equation}
    \label{eq:one-step-deg-change}
    \deg_{\calG_*^{(j+1)}}(T) - \deg_{\calG_*^{(j)}}(T) \le \Del.
  \end{equation}
  To see this, recall that when we extend $\calG_*^{(j)}$ to $\calG_*^{(j+1)}$ in step~\ref{item:alg-main-step}, we only add edges $E\setminus \{u_j\}$ such that $E \in \calA^{(j)} \subseteq \calG$ and $u_j \in E$. Therefore, setting $T^*=T\cup \{u_j\}$, we have
  \begin{equation*}
    \deg_{\calG_*^{(j+1)}}(T) - \deg_{\calG_*^{(j)}}(T) \le \deg_{\calG}(T^*) \le \Delta_{v}(\calG) \le \Del,   
  \end{equation*}
  where the last inequality is by our assumption, as claimed.

  Combining~\eqref{eq:deg-j-is-deg-final} and~\eqref{eq:one-step-deg-change}, it follows immediately that
  \[
    \deg_{\calG_*}(T) \le \Delp/2 + \Del \le \Delp,
  \]
  where the final inequality holds by Definition~\ref{dfn:Delta}. This contradicts our choice of $T$ and therefore the lemma follows.
\end{proof}

We are now ready for the final lemma, which is really the heart of the matter. We will show that if $\calG$ has sufficiently many edges and all of the maximum degrees of $\calG$ are appropriately bounded, then either the output hypergraph~$\calG_*$ has sufficiently many edges, or the output set $W$ must be big. We remark that here we shall use the assumption that $|I\cap V_r|\geq |V_r|-m$.

\begin{lem}
  \label{lemma:alg-analysis-progress}
  Suppose that $|I\cap V_r|\geq |V_r|-m$ and let $\alpha > 0$. If 
  \begin{enumerate}[label=(A\arabic*)]
  \item
    \label{item:assumption-edges}
    $e(\calG) \ge \alpha \prod_{i=1}^{r-1}\left(\frac{b}{|V_i|}\right)^{1-x_i}\left(\frac{b}{m}\right)^{r_0-x_r} e(\calH)$ and\smallskip
     \item
    \label{item:assumption-Delta}
    $\Delta_{v}(\calG) \le \Del$ for every $v \in \calV(x)$,\smallskip
  \end{enumerate}
  then at least one of the following statements is true:
  \begin{enumerate}[label=(P\arabic*)]
  \item
    \label{item:reduce-uniformity}
    $e(\calG_*) \ge 2^{-|x|-x_r-1} \alpha \prod_{i=1}^{r-1}\left(\frac{b}{|V_i|}\right)^{1-x'_i}\left(\frac{b}{m}\right)^{r_0-x'_r} e(\calH)$.\smallskip
  \item
    \label{item:determine-many-0}
    $i'<r$ and $|W| \ge 2^{-i'-1} R^{-1} \alpha |V_{i'}|$.\smallskip
  \item
    \label{item:determine-many-1}
    $i' = r$ and $|W| \ge 2^{-r-r_0-1} R^{-1} \alpha  q$.
  \end{enumerate}
\end{lem}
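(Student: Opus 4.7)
The plan is to track the telescope $e(\calG) - e(\calA^{(L)}) = \sum_j (e(\calA^{(j)}) - e(\calA^{(j+1)}))$, split the per-iteration loss into the contributions of step~\ref{item:cleanup-1} (deleting all edges through $u_j$) and step~\ref{item:cleanup-2} (deleting edges that contain some newly-popular codegree pattern), and convert a master inequality into one of \ref{item:reduce-uniformity}, \ref{item:determine-many-0}, or \ref{item:determine-many-1} depending on which term dominates. First I would observe that the edges removed in part~\ref{item:cleanup-1} at iteration $j$ number exactly $\deg_{\calA^{(j)}}(u_j)$, and that these are exactly the shadows added to $\calG_*$ when $j\in S$. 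Summing over all $j$ and using \ref{item:assumption-Delta} to bound $\deg_{\calA^{(j)}}(u_j)\le \Delta_{e_{i'}}^{x}$ when $j\in W$ yields
\[
\sum_{j=0}^{L-1}\deg_{\calA^{(j)}}(u_j) \;=\; e(\calG_*) \;+\; \sum_{j\in W}\deg_{\calA^{(j)}}(u_j) \;\le\; e(\calG_*) + |W|\,\Delta_{e_{i'}}^{x}.
\]

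Next I would bound the step~\ref{item:cleanup-2} contributions via a double count. The key monotonicity is that $\calG_*^{(j+1)}\subseteq \calG_*$, so any $T\in \MDelp(\calG_*^{(j+1)})$ also lies in $\MDelp(\calG_*)$. Hence every edge deleted in part~\ref{item:cleanup-2} at some step contains a subset $T$ that is popular in the \emph{final} $\calG_*$ for some $v'\in\calV(x')$. Counting incidences edge-by-edge gives $|\MDelp(\calG_*)|\cdot \Delp/2 \le \prod_i\binom{x'_i}{v'_i}e(\calG_*)$, and each such $T$ kills at most $\deg_\calG(T)\le \Del$ edges. Summing over $v'\in\calV(x')$ and invoking $\Del\le (w_{i'}/b)\Delp$ from Definition~\ref{dfn:Delta}, the total step~\ref{item:cleanup-2} loss is at most $2^{|x|+1}(w_{i'}/b)\,e(\calG_*)$.

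Combining these bounds gives the master inequality
\[
e(\calG)\;\le\; e(\calA^{(L)}) \;+\; e(\calG_*)\bigl(1 + 2^{|x|+1}w_{i'}/b\bigr) \;+\; |W|\,\Delta_{e_{i'}}^x.
\]
To dispose of $e(\calA^{(L)})$ I would case on the termination rule. If $\calA^{(L)}=\emptyset$ this term is zero; if instead $|S|=b$, the $i'$-maximality of each $u_j$ together with $\calA^{(L)}\subseteq \calA^{(j)}$ gives $\deg_{\calA^{(j)}}(u_j)\ge \Delta_{e_{i'}}(\calA^{(L)})$, and averaging the latter against $V_{i'}$ (respectively against the ``live'' part of $V_r$ when $i'=r$) produces $e(\calA^{(L)})\le (w_{i'}/b)e(\calG_*)$, absorbing the extra term into the $e(\calG_*)$ contribution.

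Finally I would use hypothesis~\ref{item:assumption-edges} and split: either the $e(\calG_*)$ term in the master inequality carries at least half the mass, in which case rearranging and noting $w_{i'}/b$ exactly matches the change $F(x)\mapsto F(x')$ between the right-hand sides of \ref{item:assumption-edges} and \ref{item:reduce-uniformity} produces \ref{item:reduce-uniformity}; or the $|W|\Delta_{e_{i'}}^x$ term does, in which case plugging in the explicit bound \eqref{eq:Delta01} (if $i'<r$) or \eqref{eq:Delta10} (if $i'=r$) and cancelling the $\prod(b/|V_j|)$ factors against those in \ref{item:assumption-edges} yields \ref{item:determine-many-0} or \ref{item:determine-many-1} respectively, with exactly the constants stated. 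The main obstacle is the case $\calA^{(L)}\neq\emptyset$: one must make sure the $i'$-maximal degree in $\calA^{(L)}$ really is bounded below by an average in $V_{i'}$ (equivalently, that the surviving edges meet $V_{i'}$), and separately that the bound plays nicely with the $w_{i'}=m$ substitution when $i'=r$, where the hypothesis $|I\cap V_r|\ge |V_r|-m$ must finally be invoked to cap $|W|$ and produce the correct $q$-factor.
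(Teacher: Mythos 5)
Your proposal is correct and follows essentially the same route as the paper's proof: the same telescope for $e(\calG)-e(\calA^{(L)})$, the same splitting of each iteration's loss into the step~\ref{item:cleanup-1} contribution (which sums to $e(\calG_*)+|W|\Delta_{e_{i'}}^x$) and the step~\ref{item:cleanup-2} contribution (double-counted against the final $\calG_*$ and converted to $e(\calG_*)\cdot w_{i'}/b$ via Definition~\ref{dfn:Delta}), the same disposal of $e(\calA^{(L)})$ by comparing the $i'$-maximum degree of $\calA^{(L)}$ with an average, and the same trichotomy at the end. One small clarification on your closing remark: the hypothesis $|I\cap V_r|\ge|V_r|-m$ is invoked solely to show that every edge of the nonempty $\calA^{(L)}$ (which lives entirely inside $V_r$ when $i'=r$) touches $V_r\setminus I$ and hence $\Delta_{e_r}(\calA^{(L)})\ge e(\calA^{(L)})/m$, legitimizing the $w_r=m$ substitution; the factor $q$ in \ref{item:determine-many-1} does not come from this hypothesis but rather from the $(m/q)^{\mathds{1}[y_r>0]}$ term built into the codegree condition~\eqref{deg cond} and carried through~\eqref{eq:Delta10}.
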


\begin{proof}
  Recall that $\calG_*$ (and $\calG_*^{(j)}$ etc.) are multi-hypergraphs and that edges are counted with multiplicity.
  We observe that
  \begin{equation}
    \label{eq:eG-sum}
    e(\calG_*) = \sum_{j \in S} \left( e(\calG_*^{(j+1)}) - e(\calG_*^{(j)}) \right) = \sum_{j \in S} \Delta_{e_{i'}}(\calA^{(j)}),
  \end{equation}
  since $e(\calG_*^{(j+1)}) - e(\calG_*^{(j)}) = d_{\calA^{(j)}}(\{u_j\})$ and $u_j$ is the $i'$-maximum vertex of $\calA^{(j)}$ for each $j \in S$, and $\calG_*^{(j+1)} = \calG_*^{(j)}$ for each $j \not\in S$. 
  To bound the right-hand side of~\eqref{eq:eG-sum}, we count the edges removed from $\calA^{(j)}$ in \ref{item:cleanup-1} and \ref{item:cleanup-2} of step \ref{item:alg-cleanup}, which gives
  \[
    e(\calA^{(j)}) - e(\calA^{(j+1)}) \le \Delta_{e_{i'}}(\calA^{(j)}) + \sum_{v} \big| \MDelp(\calG_*^{(j+1)}) \setminus \MDelp(\calG_*^{(j)}) \big| \cdot \Delta_{v}(\calG).
  \]
  Summing over $j \in \{0, \ldots, L-1\}$ it follows (using~\eqref{eq:eG-sum}) that
  \[
    e(\calG) - e(\calA) \le e(\calG_*) + |W| \cdot \Delta_{e_{i'}}(\calG) + \sum_{v} \big| \MDelp(\calG_*) \big| \cdot \Del,
  \]
  since $\calA = \calA^{(L)} \subseteq \dots \subseteq \calA^{(0)} =\calG$ and $\Delta_{v}(\calG) \le \Del$ by~\ref{item:assumption-Delta}.
  Furthermore,
  \begin{equation}
    \label{eq:e-G0-increment-estimate}
    \Delta_{e_{i'}}(\calA) \le \Delta_{e_{i'}}(\calA^{(j)}) \le \Delta_{e_{i'}}(\calG) \le \Delta_{e_{i'}}^{x},
  \end{equation}
  since $\calA \subseteq \calA^{(j)} \subseteq \calG$ and $\calG$ satisfies~\ref{item:assumption-Delta}, which implies
  \begin{equation}
    \label{eq:Gn-final-0}
    e(\calG) - e(\calA) \le e(\calG_*) + |W| \Delta_{e_{i'}}^{x} + \sum_{v} \big|\MDelp(\calG_*) \big| \Del.
  \end{equation}
  Combining~\eqref{eq:eG-sum} and~\eqref{eq:e-G0-increment-estimate}, we get
  \begin{equation}
    \label{eq:e-Gn-S}
    e(\calG_*) = \sum_{j \in S} \Delta_{e_{i'}}\big(\calA^{(j)}\big) \ge |S| \Delta_{e_{i'}}(\calA) = b \Delta_{e_{i'}}(\calA),
  \end{equation}
  where the equality is due to the fact that $|S| \neq b$ only when $\calA$ is empty, see step~\ref{item:alg-stop}. 
  
 Next, to bound the sum in~\eqref{eq:Gn-final-0}, observe that, by Definition~\ref{dfn:MDel}, we have
 \[
   \big| \MDelp(\calG_*) \big| \Delnp/2 \le \sum_{T:~|T\cap V_i|=v_i} \deg_{\calG_*}(T) \leq  \binom{x_r}{v_r}e(\calG_*)\leq 2^{x_r}e(\calG_*)
 \]
 for each $v\in \calV(x')$ and therefore
 \begin{equation}
   \label{eq:e-Gn-MDel-final}
   \begin{split}
     \sum_{v\in \calV(x')} \big| \MDelp(\calG_*) \big| \Del & \le 2^{x_r+1} \sum_{v} e(\calG_*)  \left(\Del / \Delnp\right)  \\
     & \le 2^{x_r+1} \big( 2^{|x'|} - 1 \big)  e(\calG_*)  \max_{v} \left\{\Del / \Delnp\right\}\\
     &\leq 2^{x_r+1} \big( 2^{|x'|} - 1 \big)  e(\calG_*)w_{i'} / b,
   \end{split}
 \end{equation}
 where the last inequality follows from Definition~\ref{dfn:Delta}.

 Suppose first that $i'<r$ and observe that substituting~\eqref{eq:e-Gn-MDel-final} into~\eqref{eq:Gn-final-0} yields
  \begin{equation}
    \label{eq:Gn-final-case1}
    e(\calG) - e(\calA) \le e(\calG_*) + |W| \Delta_{e_{i'}}^{x} + 2^{x_r+1} \big( 2^{|x'|} - 1 \big)  e(\calG_*)|V_{i'}| / b.
  \end{equation}
  Moreover, by~\eqref{eq:e-Gn-S} we have
  \begin{equation}
    \label{eq:Delta-estimates-1}
    \frac{e(\calG_*)}{b} \ge \Delta_{e_{i'}}(\calA) \ge \frac{e(\calA)}{|V_{i'}|} 
  \end{equation}
  since the maximum degree of a hypergraph is at least as large as its average degree. Combining~\eqref{eq:Gn-final-case1} and~\eqref{eq:Delta-estimates-1}, we obtain
  \begin{equation}
    \label{eq:eGG-eGGn-W-1}
    \begin{split}
      e(\calG) & \le e(\calG_*) \frac{|V_{i'}|}{b}  \left(\frac{b}{|V_{i'}|} + 1 + 2^{x_r+|x'|+1} - 2\right) + |W|  \Delta_{e_{i'}}^{x} \\
      & \le e(\calG_*)  \frac{|V_{i'}|}{b} 2^{x_r+|x|} + |W| \Delta_{e_{i'}}^{x},
    \end{split}
  \end{equation}
  since $b \le |V_{i'}|$. Now, if the first summand on the right-hand side of~\eqref{eq:eGG-eGGn-W-1} exceeds $e(\calG) / 2$, then~\ref{item:assumption-edges} implies~\ref{item:reduce-uniformity}. Otherwise, the second summand is at least $e(\calG)/2$ and by~\ref{item:assumption-edges} and~\eqref{eq:Delta01},
  \[
    |W| \ge \frac{e(\calG)}{2 \Delta_{e_{i'}}^{x}} \ge \frac{\alpha}{2^{i'+1}R} |V_{i'}|,
  \]
  which is~\ref{item:determine-many-0}.
 
  Finally, suppose $i'=r$.
  Substituting~\eqref{eq:e-Gn-MDel-final} into~\eqref{eq:Gn-final-0} yields, using the bound $\Del / \Delnp \le m / b$,
 \begin{equation}
    \label{eq:Gn-final-case2}
    e(\calG) - e(\calA) \le e(\calG_*) + |W| \Delta_{e_r}^{x} + \big( 2^{x_r+|x|} - 2^{x_r+1} \big) e(\calG_*) \frac{m}{b}.
  \end{equation}
   We claim that
  \begin{equation}
    \label{eq:Delta-estimates-0}
    \frac{e(\calG_*)}{b} \ge \Delta_{e_r}(\calA) \ge \frac{e(\calA)}{m}.
  \end{equation}
 The first inequality follows from~\eqref{eq:e-Gn-S}, so we only need to prove the second inequality. To do so, since $I\in \calF(\calG)$ is an independent set in $\calA$ (and all edges of $\calA$ are contained in $V_r$) then every edge in $\calA$ must be incident to $V_r\setminus I$, which has size at most $m$ by assumption. This shows that \[\Delta_{e_r}(\calA) \ge  \frac{e(\calA)}{|V_r\setminus I|}\ge\frac{e(\calA)}{m}.\]
 Combining~\eqref{eq:Gn-final-case2} and~\eqref{eq:Delta-estimates-0}, we obtain
  \begin{equation}
    \label{eq:eGG-eGGn-W-0}
    \begin{split}
      e(\calG) & \le e(\calG_*) \frac{m}{b} \left(\frac{b}{m} + 1 + 2^{x_r+|x|} - 2^{x_r+1}\right) + |W| \Delta_{e_r}^{x} \\
      & \le e(\calG_*) \frac{m}{b}2^{x_r+|x|} + |W|  \Delta_{e_r}^{x},
    \end{split}
  \end{equation}
  since $b \le m$. Now, if the first summand on the right-hand side of~\eqref{eq:eGG-eGGn-W-1} exceeds $e(\calG) / 2$, then~\ref{item:assumption-edges} implies~\ref{item:reduce-uniformity}. Otherwise, the second summand is at least $e(\calG)/2$ and by~\ref{item:assumption-edges} and~\eqref{eq:Delta10},
  \[
    |W| \ge \frac{e(\calG)}{2\Delta_{e_r}^{x}} \ge \frac{\alpha}{2^{r_0+r+1}R} q,
  \]
  which is~\ref{item:determine-many-1}.
\end{proof}

\subsection{Construction of the container}
\label{sec:constr-container}

In this section, we present the construction of containers for pairs in $\calI_{m}(\calH)$ and analyse their properties, thus proving Theorem~\ref{thm:container}. For each $s \in \{0, \ldots, r_0+r-1\}$, define
\[
  \alpha_s = 2^{-s(2r_0+r)} \qquad \text{and} \qquad \beta_s = \alpha_s \prod_{j=1}^{\min\{{r-1,s\}}}\left(\frac{b}{|V_j|}\right)\left(\frac{b}{m}\right)^{\max\{0,s-r+1\}}.
\]
Given an $I \in \calI_m(\calH)$, we construct the container $(A_1,\dots,A_r)$ for $I$ using the following procedure.

\medskip
\noindent
\textbf{Construction of the container.}
Initialize $s=0,\, x=(1,\dots,1,r_0)$, $\calH^{x} = \calH$ and $S_i = \emptyset$ for all $i\in[r]$.
\begin{enumerate}[label=(C\arabic*)]
\item
  Let $i'$ and $x'$ be defined from $x$ as before.
\item
  \label{item:constr-run-algorithm}  
  Run the algorithm with $\calG \leftarrow \calH^{x}$ to obtain the $x'$-bounded hypergraph $\calG_*$, the sequence $u_0, \ldots, u_{L-1} \in V(\calH)$, and the partition $\{0, 1, \ldots, L-1\} = S \cup W$.
\item
  \label{item:constr-update-signature}
  Let $S_{i'}\leftarrow S_{i'}\cup \{u_j : j \in S\}$.
\item
  \label{item:constr-determine-many}
  If $e(\calG_*) < \beta_{s+1} e(\calH)$, then define $(A_1,\ldots,A_r)$, the container for $I$, by
  \[
    A_{i'} = V_{i'}\setminus\{u_j : j\in W\}
  \]
  and $A_j=V_j$ for $j\neq i'$ and \STOP.
\item
  \label{item:constr-reduce-uniformity}
  Otherwise, let $\calH^{x} \leftarrow \calG_*$, $x \leftarrow x'$ and $s \leftarrow s+1$ and \texttt{CONTINUE}.
\end{enumerate}

We will show that the above procedure indeed constructs containers for $\calI_m(\calH)$ that have the desired properties. To this end, we first claim that for each $x \in \calU \cup \{0\}$, the hypergraph $\calH^{x}$, if it was defined, satisfies:
\begin{enumerate}[label=(\textit{\roman*})]
\item
  \label{item:container-propty-1}
  $I\in \calI(\calH^{x})$ and
\item
  \label{item:container-propty-2}
  $\Delta_{v}(\calH^{x}) \le \Del$ for every $v \in \calV(x)$.
\end{enumerate}
Indeed, one may easily prove~\ref{item:container-propty-1} and~\ref{item:container-propty-2} by induction on $|x| - |v|$. The base case is true by Definition~\ref{dfn:Delta}, and the inductive step follows immediately from Observation~\ref{obs:h-in-FF-Gn} and Lemma~\ref{lemma:alg-analysis-degrees}.

Secondly, we claim that for each input $I \in \calI_{m}(\calH)$, step~\ref{item:constr-determine-many} is called for some $s$ and hence the container $ (A_1,\ldots,A_r)$  is defined. If this were not true, the condition in step~\ref{item:constr-reduce-uniformity} would be met $r+r_0-1$ times and, consequently, we would finish with a non-empty $(0,\dots,0)$-bounded hypergraph $\calH^{0}$, i.e., we would have $\emptyset\in E(\calH^0)$. But this contradicts~\ref{item:container-propty-1}, since $\emptyset\subset I$, so it would not be independent.

Suppose, therefore, that step~\ref{item:constr-determine-many} is executed when $\calG = \calH^{x}$ for some $x \in \calU$. We claim that $e(\calH^{x}) \ge \beta_s e(\calH)$. This is trivial if $s = 0$ since here $\calH^x=\calH$, and for $s > 0$ it holds since otherwise step~\ref{item:constr-determine-many} would have been executed in the previous iteration. We therefore have
\[
  e(\calG) = e(\calH^{x}) \ge \beta_s e(\calH) \qquad \text{and} \qquad e(\calG_*) < \beta_{s+1} e(\calH),
\]
which, by Lemma~\ref{lemma:alg-analysis-progress} and~\ref{item:container-propty-2}, implies that either~\ref{item:determine-many-0} or~\ref{item:determine-many-1} of Lemma~\ref{lemma:alg-analysis-progress} holds. Define $\delta = 2^{-(r_0+r-1)(2r_0+r)}R^{-1}$ and note that $\delta \leq \alpha_sR^{-1}$ for all $s\in[0,r_0+r-1]$.
If $i'<r$, we see that \ref{item:determine-many-0} implies
\[
  |W| \ge 2^{-r-1}R^{-1} \alpha_s |V_{i'}| \ge \alpha_r R^{-1} |V_{i'}| \geq \delta |V_{i'}|,
\]
Similarly, if $i' = r$, then by \ref{item:determine-many-1},
\[
  |W| \ge 2^{-r_0-r-1}R^{-1} \alpha_s q \ge \alpha_{r_0+r-1}R^{-1} q = \delta q.
\]
This verifies that $(A_1,\ldots,A_r)$ satisfies property~\ref{item:container-2} from the statement of Theorem~\ref{thm:container}.

Let $\calS$ denote the set of all tuples $(S_1,\dots,S_r)$ that were defined in \ref{item:constr-update-signature} when running the procedure for all $I\in\calI_m(\calH)$.
We define $g(I)=(S_1,\dots,S_r)$ and $f(g(I))=(A_1,\dots,A_r)$, where $(A_1,\dots,A_r)$ is the container tuple that was defined in \ref{item:constr-determine-many}.
Note that $f$ is well-defined by Observation~\ref{obs:number-of-containers}.
This follows directly when $i'<r$, since here the set $S_{i'}$ is equivalent to the set $S$ obtained in \ref{item:constr-run-algorithm}.
But then, in particular, everything will be the same the first time that $i'=r$, and hence $r$--maximum vertices will be considered at the same time.

Finally, we see that clearly the inclusion statements of properties~\ref{item:container-1} and~\ref{item:container-3} hold by construction, and the second one in \ref{item:container-3} is true since every $S_i$ starts empty and we stop as soon as \ref{item:constr-determine-many} is true for the first time. \qed

\section{Concluding Remarks}\label{sec:concluding}

In this paper we have focussed on the asymmetric version of the typical structure of a pair of sets subject to a constraint on the size of their sumset. 
As explained in the introduction, this formulation of the result is very natural with respect to the perspective of classical results on sumsets like the Brunn--Minkowski inequality or Kneser's theorem. 
As illustrated by this paper, obtaining such asymmetric versions of results in additive combinatorics, which is most often concerned with studying a single set possessing some additive structure, can be  a nontrivial task. 
It is worth noting that our result as is does not supersede but rather  complements the one by Campos~\cite{C2019} for the case $A=B$:
The latter cannot be directly derived from ours, as here the estimations leading to an almost surely result are based on the random choice of two independent sets rather than only one. 
Still, the quantitative bounds of our result are comparable when considering sets of the same size.

Even if a simpler version of the container result, Theorem~\ref{thm:container}, would have been sufficient for the proof of our main result, we have chosen to state it in more generality, at no cost, as in this wider generality  the result  may be suitable to address a number of additional applications, which nevertheless require developments which do not fit the length of a single paper. 

It is possible to prove a straight-forward generalization of Theorem~\ref{thm:containerfamily} for arbitrary fixed $h$ using the same arguments that were used in the $h=2$ case, but there are some caveats in the specifics.
Still, in light of this fact, a natural step further is to handle multiple set addition: if $A_1,\ldots , A_k$ is a family of independently chosen random sets of integers with cardinality $s$ among those satisfying a constraint on their sumset, say $|A_1+\cdots +A_l|\le Ks$, then with high probability each of the sets is almost contained in an arithmetic progression of size $Ks/l$ having the same common difference.
The quantitative aspects of the statement depend on the strength of supersaturation and stability results analogous to Corollaries~\ref{cor:supersat} and~\ref{cor:relativestability}, which are only partially existing in the literature. 
Tight bounds on the cardinality of multiple set addition can be found for example in Lev~\cite{L1996}, but to our knowledge no inverse results deriving the structure of sets in this multiple addition setting are available. 
A result of this kind, both for the analogue of the $3k-4$ theorem and its robust version might be attained along the lines of known results for the addition of two distinct sets by the use of the multiple set addition version of Kneser's theorem given by DeVos, Goddyn, and Mohar~\cite{DGM2009}. 
This opens a path to be yet explored.

A second natural direction is to translate the structural result into the more general setting of arbitrary abelian groups. 
The parameter $\beta(t)$ measuring the size of the largest subgroup with cardinality up to $t$ is a useful one when we consider the counting version of our main result. 
We have included its asymmetric version in this paper, Theorem~\ref{thm:easymain_counting}.  
The structural description given by Green and Ruzsa~\cite{GR2007} in their extension of Freiman's theorem to general abelian groups provides a guideline for the structure of typical sets with bounded sumset in general abelian groups. 
As explored in this paper as well as in~\cite{C2019} and~\cite{CCMMS2021}, the typical structure of sets with bounded sumset in the integers, in this case plain arithmetic progressions, is a lot simpler than the general one given by Freiman's theorem, so one would expect the same to be the case in more general groups.
The recently obtained robust version of the Balog-Szemer\' edi-Gowers theorem by Shao~\cite{S2019}, combined with an appropriate Freiman $3k-4$ type theorem seems to be the correct set of tools to achieve this objective.  
An important natural case to explore is that of groups of prime order, $G=\Z/p\Z$, where sufficiently strong analogues of the Freiman $3k-4$ Freiman theorem are already available and an equivalent statement of our main result might require less work to prove.  

An even less explored direction is to also translate the structural result to general groups, not necessarily abelian. 
Perhaps the more appropriate quest in this setting is to ask for the typical structure of approximate groups in the light of its structural characterization by Breuillard, Green and Tao~\cite{BGT2012}. 

Coming back to more classical problems in additive combinatorics, let us conclude by mentioning a natural example which can illustrate the use of the techniques used in this paper, the case of $A+B$ when $B=\lambda\ast A$, the \emph{dilation} of $A$ by some factor $\lambda$. 
It is known that $A+\lambda\ast A$ has size at least $(\lambda+1)|A|$ and, for  $\lambda$ prime, the structure of extremal sets is known~\cite{CHS2009}. 
Can one prove an approximate structure result similar in scope to Theorem~\ref{thm:easymain_structure} for this specific problem?

{\small
\providecommand{\MR}[1]{}
\providecommand{\bysame}{\leavevmode\hbox to3em{\hrulefill}\thinspace}
\providecommand{\MR}{\relax\ifhmode\unskip\space\fi MR }
\providecommand{\MRhref}[2]{%
  \href{http://www.ams.org/mathscinet-getitem?mr=#1}{#2}
}
\providecommand{\href}[2]{#2}
}

\section*{Acknowledgments}
{\setlength\parindent{0pt}
Marcelo Campos is partially supported by CNPq.\\
Matthew Coulson is supported by the Spanish Ministerio de Econom\'a y Competitividad through the project MTM2017-82166-P and the FPI-scholarship PRE2018-083621.\\
Oriol Serra is supported by the Spanish Agencia Estatal de Investigaci\'on under project PID2020-113082GB-I00.\\
This research was conducted while Maximilian W\"otzel was a member of the Barcelona Graduate School of Mathematics (BGSMath) as well as the Universitat Polit\'ecnica de Catalunya. M.~W. also acknowledges financial support from the Fondo Social Europeo and the Agencia Estatal de Investigaci\'on through the FPI grant number MDM-2014-0445-16-2 and the Spanish Ministry of Economy and Competitiveness, through the Mar\'ia de Maeztu Programme for Units of Excellence in R\&D (MDM-2014-0445), through the project MTM2017-82166-P, as well as from the Dutch Science Council (NWO) through the grant number OCENW.M20.009.}

\appendix

\section{Proof of Lemma~\ref{lem:binom_prod_bound}}\label{sec:largedeviationbound}

Dividing by the binomial coefficients on the right hand side of~\eqref{eq:binom_prod_bound} and taking the logarithm, we need to prove
\begin{equation}\label{eq:binom_prod_sum}
\sum_{i=0}^{t-1} \log\bigg( 1 - \frac{(2\sqrt{\epsilon}-2\epsilon)m}{\frac{tm}{s+t}-i} \bigg) + \sum_{j=0}^{s-1} \log\bigg( 1 + \frac{2\sqrt{\epsilon}m}{\frac{sm}{s+t}-j} \bigg) \leq -\epsilon(s+t).
\end{equation}
By using the bound $\log(1+x)\leq x-\frac{x^2}{2}+\frac{x^3}{3}$ valid on the interval $(-1,\infty)$, it suffices to prove the upper bound in~\eqref{eq:binom_prod_sum} for the expression
\begin{equation}\label{eq:binom_prod_taylor}
\begin{split}
&-2(\sqrt{\epsilon}-\epsilon)\sum_{i=0}^{t-1}\left(\frac{t}{s+t}-\frac{i}{m}\right)^{-1} 
- \frac{(2\sqrt{\epsilon}-2\epsilon)^2}{2} \sum_{i=0}^{t-1}\left(\frac{t}{s+t}-\frac{i}{m}\right)^{-2} \\
&+2\sqrt{\epsilon}\sum_{j=0}^{s-1}\left(\frac{s}{s+t}-\frac{j}{m}\right)^{-1}
-2\epsilon \sum_{i=0}^{s-1}\left(\frac{s}{s+t}-\frac{j}{m}\right)^{-2} 
+4\epsilon^{3/2}\sum_{i=0}^{s-1}\left(\frac{s}{s+t}-\frac{j}{m}\right)^{-3}.
\end{split}
\end{equation}
Next, we are going to approximate the sums in~\eqref{eq:binom_prod_taylor} by integrals, using the bounds \[\int_{a}^{b} f(x) - \frac{2(b-a)||f||_{\infty}}{n} \leq \frac{1}{n} \sum_{i=na}^{nb}f\left(\frac{i}{n}\right) \leq \int_{a}^{b} f(x)\] which hold for any continuous, non-decreasing function $f$ on the interval $[a,b]$.
We start with the linear terms.
Defining $K>(1+\alpha)$ by $m=K(s+t)$, we get
\begin{equation}\label{eq:lin_term_t}
\begin{split}
-2(\sqrt{\epsilon}-\epsilon)\sum_{i=0}^{t-1}\frac{1}{\frac{t}{s+t}-\frac{i}{m}}
&\leq -2(\sqrt{\epsilon}-\epsilon) \left(m \int_{0}^{t/m}\frac{1}{\frac{t}{s+t}-x}dx - 2\frac{t}{m}\left(\frac{t}{s+t}-\frac{t}{m}\right)^{-1} \right)\\
&= -2(\sqrt{\epsilon}-\epsilon)\left(-m\log\left(1-\frac{s+t}{m}\right)-\frac{2}{K-1}\right)\\
&\leq -2m(\sqrt{\epsilon}-\epsilon)\log\left(\frac{K}{K-1}\right) + \frac{4\sqrt{\epsilon}}{K-1},
\end{split}
\end{equation}
and similarly
\begin{equation}\label{eq:lin_term_s}
\begin{split}
2\sqrt{\epsilon}\sum_{j=0}^{s-1}\left(\frac{s}{s+t}-\frac{j}{m}\right)^{-1} 
&\leq 2m\sqrt{\epsilon}\log\left(\frac{K}{K-1}\right).
\end{split}
\end{equation}
For the quadratic terms, we see that
\begin{equation}\label{eq:quad_term_t}
\begin{split}
- \frac{(2\sqrt{\epsilon}-2\epsilon)^2}{2} \sum_{i=0}^{t-1}\left(\frac{t}{s+t}-\frac{i}{m}\right)^{-2} 
&\leq - \frac{(2\sqrt{\epsilon}-2\epsilon)^2}{2}\left(\frac{K(s+t)^2}{t(K-1)}-\frac{2K(s+t)}{t(K-1)^2}\right)\\
&\leq -\frac{2\epsilon K(s+t)^2}{t(K-1)}+\frac{4\epsilon^{3/2}K(s+t)^2}{t(K-1)}+\frac{8\epsilon K(s+t)}{t(K-1)^2},
\end{split} 
\end{equation}
and similarly for the one involving $s$,
\begin{equation}\label{eq:quad_term_s}
\begin{split}
-2\epsilon \sum_{i=0}^{s-1}\left(\frac{s}{s+t}-\frac{j}{m}\right)^{-2} 
\leq -\frac{2\epsilon K(s+t)^2}{s(K-1)} + \frac{4\epsilon K(s+t)}{s(K-1)^2}.
\end{split}
\end{equation}
Finally, for the cubic term we see that 
\begin{equation}\label{eq:cube_term}
\begin{split}
4\epsilon^{3/2}\sum_{i=0}^{s-1}\left(\frac{s}{s+t}-\frac{j}{m}\right)^{-3}
&\leq \frac{2\epsilon^{3/2}K(2K-1)(s+t)^3}{s^2(K-1)^2}.
\end{split}
\end{equation}
Note that the $2m\sqrt{\epsilon}$ parts of the linear terms cancel out, while \[2\epsilon K(s+t)\log\left(\frac{K}{K-1}\right) \leq \frac{2\epsilon K(s+t)^2}{\max(s,t)(K-1)},\] which follows from the fact that $x\log(1+x^{-1})\leq 1$ for all $x>0$.
On the other hand, because of $(s+t)\geq 2^{5}\alpha^{-1}\geq 2^5(K-1)^{-1}$ and the bounds on $\epsilon$, the sum of all remaining positive term in Equations~\eqref{eq:lin_term_t}--\eqref{eq:cube_term} can be upper bounded by $\frac{\epsilon K(s+t)^2}{\min(s,t)(K-1)}$, and hence we see that \[\eqref{eq:binom_prod_taylor} \leq -\frac{\epsilon K(s+t)^2}{\min(s,t)(K-1)} \leq -\epsilon (s+t),\] which implies~\eqref{eq:binom_prod_sum} and hence proves the statement. \hfill\qedsymbol

\end{document}